\newcommand{\op}{\textup{op}}
\newcommand{\true}{\textup{true}}
\newcommand{\false}{\textup{false}}
\newcommand{\ob}[1]{\textup{Ob}(#1)}
\newcommand{\Hom}{\textup{Hom}}
\newcommand{\myand}{\,\&\,}
\newcommand{\bline}{\begin{spacing}{0.5} \end{spacing}}
\newcommand{\db}[1]{{\llbracket #1 \rrbracket}}
\newcommand{\lcs}[1]{\mathbf{#1}}
\newcommand{\ind}[1]{{\textup{Ind}(#1)}}
\newcommand{\funct}[2]{{\textup{Fun}(#1,#2)}}
\newcommand{\unit}[1]{{\eta_{#1}}}
\newcommand{\counit}[1]{{\varepsilon_{#1}}}
\newcommand{\counitinv}[1]{{\varepsilon_{#1}^\prime}}
\newcommand{\wfig}{0.32\textwidth}
\newcommand{\wfigg}{0.375\textwidth}
\newcommand{\csize}{1.5pt}
\tikzset{->-/.style={decoration={
  markings,
  mark=at position #1 with {\arrow{>}}},postaction={decorate}}}
\tikzset{my grid/.style={very thin,black!70!white}}
\tikzset{my cood/.style={black}}
\tikzset{my arrow/.style={-> }}
\tikzset{my mapsto/.style={|-> }}
\tikzset{my coordarrow/.style={->}}
\tikzset{my frame/.style={very thick}}
\newcommand{\dotminus}{\mathbin{\text{\@dotminus}}}
\newcommand{\@dotminus}{%
  \ooalign{\hidewidth\raise1ex\hbox{.}\hidewidth\cr$\m@th-$\cr}%
}
\numberwithin{equation}{chapter}
\declaretheorem[style=plain,numberwithin=chapter,name=Theorem]{thm}
\declaretheorem[style=plain,sibling=thm,name=Lemma]{lem}
\declaretheorem[style=plain,sibling=thm,name=Proposition]{prop}
\declaretheorem[style=plain,sibling=thm,name=Corollary]{cor}
\declaretheorem[style=definition,qed=$\blacksquare$,sibling=thm,name=Definition]{defn}
\declaretheorem[style=definition,qed=$\blacksquare$,sibling=thm,name=Example]{ex}
\title{A Categorical Approach to L-Convexity}
\author{Soichiro Fujii}
\begin{document}
\frontmatter
\begin{titlepage}
    \begin{center}
        \vspace*{3cm}
 
        \Huge
        A Categorical Approach to L-Convexity
        
        \vspace{9cm}
        \LARGE{Soichiro Fujii}
 
        \vfill
 \LARGE{Bachelor's Thesis\\ \vspace{1cm}
 February, 2014\\
 Mathematical Information Engineering Course\\
 Department of Mathematical Engineering and Information Physics\\
 Faculty of Engineering, The University of Tokyo}
 
    \end{center}
\end{titlepage}

\chapter*{Abstract}
We investigate an enriched-categorical approach to a
field of discrete mathematics.
The main result is a duality theorem
between a class of enriched categories
(called $\overline{\mathbb{Z}}$- or 
$\overline{\mathbb{R}}$-categories)
and that of what we call 
($\overline{\mathbb{Z}}$- or 
$\overline{\mathbb{R}}$-) extended L-convex sets.
We introduce extended L-convex sets as
variants of certain discrete structures
called L-convex sets and L-convex polyhedra, 
studied in the field of discrete convex analysis.
We also introduce homomorphisms between
extended L-convex sets.
The theorem claims that there is a one to one 
correspondence 
(up to isomorphism) between two classes.
The thesis also contains an introductory
chapter on enriched categories and
no categorical knowledge is assumed.

\tableofcontents
\chapter*{Notation}
\section*{Proof Trees}
Throughout the thesis, we use a
proof-tree-like notation as a visually intuitive way
of reasoning. 
The tree
\begin{prooftree}
	\alwaysDoubleLine
	\Axiom$P(x,y,\dots)\fCenter$
	\UnaryInf$Q(x,y,\dots)\fCenter$
\end{prooftree}
where $P$ and $Q$ are formulas
possibly with some free variables
($x,y,\dots$), represents the assertion
\[
	P(x,y,\dots) \iff Q(x,y,\dots)	
\]
for all possible values of $x,y,\dots$ in some
appropriate domains. 

For example, the supremum of a subset $S$ of a
complete lattice $(L,\sqsubseteq)$ is the unique
element $a\in L$ with the following property:
\begin{prooftree}
	\def\fCenter{\ \sqsubseteq\ }
	\alwaysDoubleLine
	\Axiom$s\fCenter x\quad(\forall s\in S)$
	\UnaryInf$a\fCenter x$
\end{prooftree}
which is to be read as 
\[
	x\text{ is an upper bound of }S\iff a\sqsubseteq x	
\]
for all $x\in L$, or equivalently ``$a$ is the least element
among upper bounds of $S$.''

Higher trees also appear:
\begin{prooftree}
	\alwaysDoubleLine
	\Axiom$P(x,y,\dots)\fCenter$
	\UnaryInf$Q(x,y,\dots)\fCenter$
	\UnaryInf$R(x,y,\dots)\fCenter$
\end{prooftree}
Note that any two formulas, especially the top
and the bottom ones, are equivalent in a 
higher tree as well.

\section*{$\lambda$-Notation}
In order to define maps succinctly without giving 
a name,
we exploit the $\lambda$-notation 
borrowed from (typed) lambda calculi.
An example of definitions of maps via the
$\lambda$-notation is the following:
\[
\lambda x\in \mathbb{R}.\,x^2+2x.
\]
This expression denotes a function which takes 
an element $x$ of $\mathbb{R}$ and returns the 
value $x^2+2x$.
Therefore if we pass an argument, say, 3,
we have:
\[
(\lambda x\in \mathbb{R}.\,x^2+2x)(3)=3^2+2\cdot3=15.
\]

\section*{Conventions}
To highlight definitions,
terms being defined (either explicitly or implicitly)
are written in the
\textbf{boldface} font;
the \textit{italic} font is not used for this purpose, 
and is reserved for emphases.

The ends of definitions and examples are marked by
$\blacksquare$, and the ends of proofs
by $\qedsymbol$.

\mainmatter

\chapter{Introduction}
\section{Background and Our Results}
The study presented in this thesis is first motivated and
inspired by a recent paper by Simon Willerton
\cite{Wil13}.
In this paper, he recognizes that a construction
called \textit{directed tight span}, introduced 
independently by \cite{HK12} and \cite{KKO12},
is an instance of a more general construction 
known as the \textit{Isbell completion}.
In fact, the directed tight span is a 
result within the theory of metric spaces,
whereas the Isbell completion belongs to a
discipline called enriched category theory. 
Why such a link could exist?
This is due to an influential 1973 paper \cite{Law73} of
William Lawvere; he remarks that, among others,
\textit{enriched categories specialize to metric spaces}.

Let us be more precise.
Enriched categories are abstract entities that 
generalize (ordinary) categories studied in
category theory.
A general definition of an enriched category involves
a parameter $\mathcal{V}$ called the 
\textit{enriching category},
itself a category (with some additional structures).
Therefore, if we restrict our attention to
those enriched categories with a specific 
enriching category, we obtain a theory to which
the theory of enriched categories specializes.
For example, if we let $\mathcal{V}=\textbf{Set}$, 
where $\textbf{Set}$ is the category of sets and
maps, then it turns out that 
enriched categories with the enriching category
$\textbf{Set}$ 
(called $\textbf{Set}$-categories)
are nothing but categories;
thus enriched category theory specializes to
category theory.
What Lawvere observed is, if we set 
$\mathcal{V}=\overline{\mathbb{R}}_+$,
a poset of nonnegative real numbers together with
an additional element $\infty$ 
(for those not familiar with categories:
posets are a special kind of categories),
then $\overline{\mathbb{R}}_+$-categories are a 
little generalized metric spaces
(in a precise statement, every metric space is an
$\overline{\mathbb{R}}_+$-category).
Therefore he established a link between
the theory of enriched categories and that of
metric spaces.
Now it is amazing that many notions 
of metric spaces,
e.g., that of nonexpansive maps, sup-distances on
function spaces, the Fr\'{e}chet embeddings of
metric spaces, and directed tight spans,
already appear quite canonically in  
enriched category theory.

The main objective of this thesis 
is to present an enriched-categorical 
approach to another seemingly unrelated field: 
\textit{discrete convex analysis}.
The theory of discrete convex analysis 
(see \cite{Mur03} for details)
can be seen as a discrete (or $\mathbb{Z}$) version of 
convex analysis (based on $\mathbb{R}$),
transporting various notions of convex analysis
to a discrete setting.
The notion of convex sets ($\subseteq\mathbb{R}^n$), 
for example, thus has its
discrete counterparts, notably that of
\textit{L-convex sets} (named after lattices) and 
\textit{M-convex sets} (after matroids),
which are certain subsets of $\mathbb{Z}^n$.
Our main result concerns L-convex sets; 
rather, their variant what we call 
$\overline{\mathbb{Z}}$-\textit{extended L-convex sets}.
By $\overline{\mathbb{Z}}$ we mean (among several other 
things) a poset of integers together with two additional 
elements $-\infty$ and $\infty$.
We introduce $\overline{\mathbb{Z}}$-extended L-convex 
sets roughly as certain 
subsets of $\overline{\mathbb{Z}}^n$,
or more generally, subsets of $\overline{\mathbb{Z}}^V$ 
where $V$ is some (possibly infinite) set.
The axioms we impose for $\overline{\mathbb{Z}}$-extended 
L-convex sets are similar (but definitely not equal) to 
that for L-convex sets, so we regard 
$\overline{\mathbb{Z}}$-extended L-convex sets as 
analogs of L-convex sets.
On the other hand, the poset $\overline{\mathbb{Z}}$
has a natural structure to be an enriching category,
and we can consider $\overline{\mathbb{Z}}$-categories
and the theory of them.
Recall the observation of Lawvere that 
$\overline{\mathbb{R}}_+$-categories are 
like metric spaces.
Because the structures of $\overline{\mathbb{R}}_+$
and $\overline{\mathbb{Z}}$ are similar, 
one can say that $\overline{\mathbb{Z}}$-categories 
are like metric spaces as well, 
although the analogy is even weaker.

Our main result is a correspondence (or duality) between 
$\overline{\mathbb{Z}}$-categories and 
$\overline{\mathbb{Z}}$-extended L-convex sets.
The correspondence is established at three levels:
first for individual objects,
second for maps between objects,
and finally for what we call canonical orderings
(usually called \textit{natural transformations})
between maps.
At the first level, we present a construction
that makes a $\overline{\mathbb{Z}}$-extended 
L-convex set out of a $\overline{\mathbb{Z}}$-category,
and another one that performs the reverse. 
These two constructions are \textit{inverses} in the 
sense that if we start from a 
$\overline{\mathbb{Z}}$-category and apply the first
construction to get a $\overline{\mathbb{Z}}$-extended 
L-convex set, and then apply the second to
obtain another $\overline{\mathbb{Z}}$-category,
then the resulting one is isomorphic to the 
$\overline{\mathbb{Z}}$-category we started from;
and likewise if we start from
a $\overline{\mathbb{Z}}$-extended 
L-convex set.
Such a result is in fact already established 
for L-convex sets and distance
functions satisfying the triangle inequality,
with essentially the same technical contents;
see \cite[Section 5.3]{Mur03}.
However, our result is new in its
formulation of the constructions.
It turns out that the set 
$\mathbb{Z}\cup\{-\infty,\infty\}$, which is the 
underlying set of the poset $\overline{\mathbb{Z}}$,
can naturally be seen
both as a $\overline{\mathbb{Z}}$-category and
as a $\overline{\mathbb{Z}}$-extended L-convex set;
in either case we denote the resulting
entity by $\overline{\mathbb{Z}}$.
Our constructions explicitly involve the 
\textit{function space constructions 
with codomain} $\overline{\mathbb{Z}}$ (either as 
a $\overline{\mathbb{Z}}$-category or 
as a $\overline{\mathbb{Z}}$-extended L-convex set)
as key steps.
Therefore in our formulation,
maps play a crucial role.
Let us discuss them next;
the discussion also leads to the second level of the 
correspondence.

In enriched category theory, there is an established
notion of  
maps between enriched (say, $\mathcal{V}$-)
categories, called 
$\mathcal{V}$-\textit{functors}.
Thus we adopt $\overline{\mathbb{Z}}$-functors as 
the members of the class of maps between 
$\overline{\mathbb{Z}}$-categories we consider, which 
are basically nonexpansive 
(distance-nonincreasing) maps.
For $\overline{\mathbb{Z}}$-extended L-convex sets, 
we adopt what we call \textit{homomorphisms} between
them as natural maps.
The introduction of the notion of 
homomorphisms is among the contribution of the thesis.
Its definition is closely tied to our definition 
of $\overline{\mathbb{Z}}$-extended L-convex sets
and seems to be natural.
The duality at the second level states that 
there is a bijection between 
the set of $\overline{\mathbb{Z}}$-functors
with specified domain and codomain
$\overline{\mathbb{Z}}$-categories and that of 
homomorphisms with the corresponding
domain and codomain $\overline{\mathbb{Z}}$-extended
L-convex sets, where the correspondence is 
that built in the first level.
However, one noteworthy point is 
that the \textit{directions} of maps reverse;
that is, what corresponds to the domain 
$\overline{\mathbb{Z}}$-extended
L-convex set is the codomain 
$\overline{\mathbb{Z}}$-category,
and the codomain 
$\overline{\mathbb{Z}}$-extended
L-convex set the domain 
$\overline{\mathbb{Z}}$-category.
This is the reason why we call the whole correspondence
a \textit{duality} as well.

The canonical orderings are certain preorder relations
on the sets of $\overline{\mathbb{Z}}$-functors or
homomorphisms with specified domain and codomain.
They are specialization of $\mathcal{V}$-natural
transformations between $\mathcal{V}$-functors in 
enriched category theory (one can interpret 
homomorphisms as a special kind of 
$\overline{\mathbb{Z}}$-functors).
We show that, as a third level of the duality,
the correspondence of maps at the second level
respects the canonical orderings.

In fact, there is no difficulty to develop 
an entirely parallel story by replacing
$\mathbb{Z}$ by $\mathbb{R}$; in this case,
what correspond to our $\overline{\mathbb{R}}$-extended 
L-convex sets turn out to be 
\textit{L-convex polyhedra}, which are also studied in 
discrete convex analysis.
Therefore in the later chapters of the thesis,
we use the symbol $\mathbb{K}$ to denote 
either $\mathbb{Z}$ or $\mathbb{R}$
and discuss $\overline{\mathbb{K}}$-categories and 
$\overline{\mathbb{K}}$-extended L-convex sets,
treating both cases simultaneously.

\section{Chapter Overview}
In Chapter 2, we develop the theory of enriched 
categories, in a simplified form.
Although the general theory of enriched categories
normally requires acquaintance with basic
(ordinary) category theory
(as presented in \cite{Mac98}), we avoid those points
where such knowledge is compulsory by restricting
our interest to the cases where enriching categories
are posets.
Therefore, the exposition is intended to be so
introductory as to be
readable without any previous categorical experience.
One remarkable point of (enriched) category theory is 
that many general abstract notions specialize to 
ones which are well-known inside some particular
branch of mathematics.
We hope that, with preordered sets and 
(somewhat generalized variants of)
metric spaces as running examples, the chapter
serves to convey the reader this fascinating
aspect of categories.  

Chapter 3 contains our main contribution.
First we introduce the notions of 
$\overline{\mathbb{K}}$-extended L-convex sets 
and homomorphisms between them.
To indicate the underlying categorical viewpoints
(in this case, not in the level of (enriched)
$\overline{\mathbb{K}}$\textit{-categories},
but in that of the (ordinary, or 2-) \textit{category of 
$\overline{\mathbb{K}}$-categories} or the 
\textit{category of $\overline{\mathbb{K}}$-extended
L-convex sets}),
we occasionally use the terminology of 
category theory without giving definitions. 
However, all statements are translated into 
elementary terms and therefore the reader can
entirely skip these parts.
We present the duality theorem,
first for individual objects,
next for maps between them,
and finally for canonical orderings between maps.

Finally, in Chapter 4 we summarize the results of 
the thesis and briefly indicate ways to 
future research.

\chapter{Poset-Enriched Category Theory}
In this chapter, we develop the theory of
enriched categories, under a crucial assumption
that the enriching category is actually a
\textit{poset}.
This assumption entirely removes the burden
of checking the commutativity of (usually large) diagrams
in the enriching category, simply because every
diagram in a poset commutes.
Since all enriching categories we will encounter
in this thesis are posets,
such a simplified theory suffices.
The organization of the chapter roughly follows that of 
\cite{Kel82}, which we also recommend as a text
on general enriched category theory.

\section{Enriching Posets}
As one needs the notion of fields to define
vector spaces, in order to define enriched categories,
we need the notion of \textit{enriching categories},
often denoted by the symbol $\mathcal{V}$.
The fertility of the resulting theory of enriched
categories over a particular enriching category
$\mathcal{V}$ depends on how \textit{nice} $\mathcal{V}$ is, and it turns out that
in order to gain a fully
fruitful theory, $\mathcal{V}$ should be a
\textit{bicomplete symmetric monoidal closed category}.
As promised above, we treat only the cases where
$\mathcal{V}$ is a poset; when this is the case, the
definition of bicomplete symmetric monoidal closed
categories, rather, 
\textit{bicomplete symmetric monoidal closed posets} = 
\textit{symmetric monoidal closed complete lattices},
is given successively as follows:

\begin{defn} 
A \textbf{symmetric monoidal poset}
(\textbf{SM-P})
$\mathcal{V}$ is a triple
$(\mathcal{V}_0,\otimes,e)$
where
\begin{itemize}
	\item
	$\mathcal{V}_0=(\mathcal{V}_0,\sqsubseteq)$ 
	is a poset called the \textbf{underlying poset};
	\item
	$\otimes\colon\mathcal{V}_0\times\mathcal{V}_0\longrightarrow \mathcal{V}_0$ is a binary operation on
	$\mathcal{V}_0$
	called the \textbf{tensor product};
	\item
	$e$ is an element of $\mathcal{V}_0$ called the 
	\textbf{unit element};
\end{itemize}
such that the following axioms hold:
\begin{description}[font=\normalfont]
	\item[$\qquad$(Monotonicity of $\otimes$)]
	$x\sqsubseteq x^\prime$ and $y\sqsubseteq y^\prime$ imply $x\otimes y\sqsubseteq x^\prime\otimes y^\prime\quad (\forall x,x^\prime,y,y^\prime\in \mathcal{V}_0)$;
	\item[$\qquad$(Associative law)]
	$x\otimes (y\otimes z)=(x\otimes y)\otimes z\quad(\forall x,y,z\in \mathcal{V}_0)$;
	\item[$\qquad$(Unit law for $\otimes$)]
	$e\otimes x=x=x\otimes e\quad(\forall x\in \mathcal{V}_0)$;
	\item[$\qquad$(Commutative law for $\otimes$)]
	$x\otimes y=y\otimes x\quad (\forall x,y\in \mathcal{V}_0)$.
\end{description}
A \textbf{symmetric monoidal complete lattice} 
(\textbf{SM-CL}) is
an SM-P whose underlying poset $\mathcal{V}_0$
is a complete lattice.
\end{defn}

Thus, an SM-P is nothing but a
partially ordered commutative monoid.
The name of the operation $\otimes$, 
tensor product, comes from the fact that
in some (non-poset) symmetric monoidal categories,
the corresponding operation is given by the classical
tensor product of e.g., modules.

\begin{defn} 
A \textbf{symmetric closed poset}
(\textbf{SC-P})
$\mathcal{V}$ is a triple
$(\mathcal{V}_0,[-,-],e)$
where
\begin{itemize}
	\item
	$\mathcal{V}_0=(\mathcal{V}_0,\sqsubseteq)$ 
	is a poset called the \textbf{underlying poset};
	\item
	$[-,-]\colon\mathcal{V}_0^\op\times\mathcal{V}_0\longrightarrow \mathcal{V}_0$ is a binary operation on
	$\mathcal{V}_0$
	called the \textbf{internal-hom};
	\item
	$e$ is an element of $\mathcal{V}_0$ called the 
	\textbf{unit element};
\end{itemize}
such that the following axioms hold:
\begin{description}[font=\normalfont]
	\item[$\qquad$(Monotonicity of \mbox{$[-,-]$})]
	$y\sqsupseteq y^\prime$ and $z\sqsubseteq z^\prime$ imply $[y,z]\sqsubseteq [y^\prime, z^\prime]\quad (\forall y,y^\prime,z,z^\prime\in \mathcal{V}_0)$;
	\item[$\qquad$(Composition law)]
	$[y,z]\sqsubseteq [[x,y],[x,z]]\quad(\forall x,y,z\in\mathcal{V}_0)$;
	\item[$\qquad$(Unit law for \mbox{$[-,-]$})]
	$z=[e,z]\quad(\forall z\in \mathcal{V}_0)$;
	\item[$\qquad$(Commutative law for \mbox{$[-,-]$})]
	$x\sqsubseteq [y,z]\iff y\sqsubseteq[x,z]\quad(\forall x,y,z\in \mathcal{V}_0)$.
\end{description}
A \textbf{symmetric closed complete lattice} 
(\textbf{SC-CL}) is
an SC-P whose underlying poset $\mathcal{V}_0$
is a complete lattice.
\end{defn}

Note the sign of an inequality in the monotonicity
axiom: the domain of the first
argument of $[-,-]$ is
$\mathcal{V}_0^\op=(\mathcal{V}_0,\sqsupseteq)$,
the poset obtained by reversing the order
$\sqsubseteq$ in $\mathcal{V}_0$, thus a clause
in the antecedent of the monotonicity axiom
is $y\sqsupseteq y^\prime$,
not $y\sqsubseteq y^\prime$.
We will explain the mysterious name ``internal-hom'' 
later.

\begin{defn} 
A \textbf{symmetric monoidal closed poset}
(\textbf{SMC-P})
$\mathcal{V}$ is a quadruple
$(\mathcal{V}_0,\otimes,$ $[-,-],e)$
where
\begin{itemize}
	\item
	the triple $(\mathcal{V}_0,\otimes,e)$ is an SM-P;
	\item
	the triple $(\mathcal{V}_0,[-,-],e)$ is an SC-P;
\end{itemize}
such that these two structures are related as follows:
\begin{description}[font=\normalfont]
	\item[$\qquad$(Adjointness relation)]
	$x\otimes y\sqsubseteq z\iff x\sqsubseteq [y,z]\quad(\forall x,y,z\in \mathcal{V}_0)$.
\end{description}
A \textbf{symmetric monoidal closed complete lattice}
(\textbf{SMC-CL}) is
an SMC-P whose underlying poset $\mathcal{V}_0$
is a complete lattice.
\end{defn}

The adjointness relation, in the proof-tree notation
\begin{prooftree}
	\def\fCenter{\ \sqsubseteq\ }
	\alwaysDoubleLine
	\Axiom$x\otimes y\fCenter z$
	\UnaryInf$x\fCenter [y,z]$
\end{prooftree}
is equivalent to the requirement that 
the unary operations $(-\otimes y)$ and $[y,-]$,
obtained by substituting $y$, form a 
Galois connection with $(-\otimes y)$ the left (lower)
adjoint and $[y,-]$ the right (upper) adjoint.
One can view the commutative law for $[-,-]$ 
in a similar way.

Although we presented an SMC-P
$\mathcal{V}$ as a quadruple
$(\mathcal{V}_0,\otimes,$ $[-,-],e)$,
in fact the adjointness relation is so rigid that 
under which the tensor product $\otimes$ and the
internal-hom $[-,-]$ determine each other uniquely. 
In particular, when the underlying poset is a complete
lattice, the following holds:

\begin{prop}\label{prop:SMCCL} 
Let $\mathcal{V}_0=(\mathcal{V}_0,\sqsubseteq)$ be
a complete lattice. Then the following are
equivalent:
\begin{enumerate}
	\item
	$(\mathcal{V}_0,\otimes,[-,-],e)$ is an SMC-CL.
	\item
	$(\mathcal{V}_0,\otimes,e)$ is an SM-CL and 
	for each $y\in \mathcal{V}_0$, $(-\otimes y)$
	preserves suprema.
	\item
	$(\mathcal{V}_0,[-,-],e)$ is an SC-CL and
	for each $y\in \mathcal{V}_0$, $[y,-]$
	preserves infima.
\end{enumerate}
Moreover, the data in (ii) or (iii) are
sufficient to recover the whole data in (i).
\end{prop}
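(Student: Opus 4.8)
The plan is to prove the equivalence by establishing $(i)\Rightarrow(ii)$, $(i)\Rightarrow(iii)$, and then the two reverse implications $(ii)\Rightarrow(i)$ and $(iii)\Rightarrow(i)$, exploiting throughout the fact that the adjointness relation sets up a Galois connection between $(-\otimes y)$ and $[y,-]$ for each fixed $y$. The central tool is the standard fact about Galois connections (adjoint functors between posets): a monotone map between complete lattices is a left adjoint if and only if it preserves all suprema, and dually it is a right adjoint if and only if it preserves all infima. Since the enriching structure here lives entirely in a complete lattice, every family has both a supremum and an infimum, so this characterization is available.

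First I would treat the forward directions. Assuming $(i)$, the triple $(\mathcal{V}_0,\otimes,e)$ is an SM-P over a complete lattice, hence an SM-CL; and the adjointness relation says precisely that $(-\otimes y)$ is left adjoint to $[y,-]$ for each $y$. Being a left adjoint between complete lattices, $(-\otimes y)$ preserves suprema, giving $(ii)$. Symmetrically, $(i)$ gives that $(\mathcal{V}_0,[-,-],e)$ is an SC-CL and that $[y,-]$, as a right adjoint, preserves infima, giving $(iii)$.

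Next I would handle the reverse directions, which is where the real content lies and which I expect to be the main obstacle. Assume $(ii)$: we have an SM-CL in which each $(-\otimes y)$ preserves suprema. By the adjoint functor theorem for complete lattices, each such supremum-preserving map has a right adjoint, which we \emph{define} to be $[y,-]$; explicitly $[y,z]=\bigsqcup\{x\mid x\otimes y\sqsubseteq z\}$. The adjointness relation then holds by construction. The work is to verify that this $[-,-]$ satisfies all the SC-P axioms — monotonicity, the composition law, the unit law $[e,z]=z$, and the commutative law — deducing each from the corresponding monoidal axiom via the adjointness relation. For instance, the unit law follows because $(-\otimes e)$ is the identity, so its right adjoint is the identity; the commutative law for $[-,-]$ follows from the commutative law for $\otimes$ combined with adjointness; and the composition law is the adjoint transpose of associativity. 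The implication $(iii)\Rightarrow(i)$ is entirely dual: given a supremum\nobreakdash-reflecting, infimum\nobreakdash-preserving $[y,-]$, define $x\otimes y=\bigsqcap\{z\mid x\sqsubseteq[y,z]\}$ as the left adjoint and check the monoidal axioms.

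The final ``Moreover'' clause is then immediate from these reverse constructions: starting from the data of $(ii)$, the internal-hom is forced to be the right adjoint of $(-\otimes y)$ and is therefore uniquely determined, recovering all of $(i)$; and symmetrically for $(iii)$. The main delicacy throughout will be bookkeeping the variance in the composition and commutative laws — making sure each monoidal identity is transported across the Galois connection in the correct direction — rather than any deep difficulty, since uniqueness of adjoints does all the heavy lifting.
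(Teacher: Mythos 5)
Your proposal is correct, and its skeleton is the same as the paper's: the same four implications, the same forced formulas $[y,z]=\bigvee\{x\in\mathcal{V}_0\mid x\otimes y\sqsubseteq z\}$ and $x\otimes y=\bigwedge\{z\in\mathcal{V}_0\mid x\sqsubseteq[y,z]\}$ for the missing operation, the same use of the preservation hypotheses to establish the adjointness relation, and uniqueness of adjoints for the final ``Moreover'' clause. Where you differ is in how the remaining axioms are verified: the paper grinds them out by direct manipulation of the explicit sup/inf formulas (its associativity check in (iii)$\Rightarrow$(i) is by far the longest computation), whereas you transport each axiom across the established Galois connection --- the unit law via uniqueness of adjoints from $(-\otimes e)=\mathrm{id}$, commutativity of one operation from commutativity of the other plus adjointness, and the composition law as the transpose of associativity. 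This is cleaner and more conceptual, at the price of citing standard facts about adjunctions between complete lattices that the paper proves by hand. One caveat: your claim that (iii)$\Rightarrow$(i) is ``entirely dual'' overstates the symmetry, since the SM-P and SC-P axioms are not order-duals of one another; associativity of the constructed $\otimes$ is the one axiom that still needs a genuine argument. It does go through in your style --- from the composition-law instance $[y\otimes z,w]\sqsubseteq[[z,y\otimes z],[z,w]]$, the adjunction unit $y\sqsubseteq[z,y\otimes z]$, and contravariant monotonicity one gets $[y\otimes z,w]\sqsubseteq[y,[z,w]]$, whence $(x\otimes y)\otimes z\sqsubseteq x\otimes(y\otimes z)$ by transposing twice, and the reverse inclusion follows by permuting arguments using the already-derived commutativity --- but this is exactly where the variance bookkeeping you flag has real content, and a complete write-up would need to spell it out rather than appeal to duality.
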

\begin{proof}
\begin{description}[font=\normalfont]
\item[[(i)$\implies$(ii)\!\!\!]]
The adjointness relation implies the following:
\begin{prooftree}
	\def\fCenter{\ \sqsubseteq\ }
	\alwaysDoubleLine
	\Axiom$(\bigvee x_i)\otimes y\fCenter z$
	\UnaryInf$\bigvee x_i\fCenter [y,z]$
	\UnaryInf$x_i\fCenter [y,z]\quad (\forall i)$
	\UnaryInf$x_i\otimes y\fCenter
	z\phantom{[y,]}\quad (\forall i)$
\end{prooftree}
Thus $(\bigvee x_i)\otimes y=\bigvee (x_i\otimes y)$,
as required.
\item[[(ii)$\implies$(i)\!\!\!]]
First we claim that in order to make the adjointness relation
\begin{prooftree}
	\def\fCenter{\ \sqsubseteq\ }
	\alwaysDoubleLine
	\Axiom$x\otimes y\fCenter z$
	\UnaryInf$x\fCenter [y,z]$
\end{prooftree}
hold for some binary operation $[-,-]$, 
the value of $[y,z]$ must be the supremum of $x$'s
satisfying $x\otimes y\sqsubseteq z$, namely,
\begin{align}\label{homAsSup}
	[y,z]=\bigvee\{x\in \mathcal{V}_0\mid x\otimes y\sqsubseteq z\}.
\end{align}
Note that the supremum certainly exists since $\mathcal{V}_0$
is assumed to be a complete lattice.
Suppose $[-,-]$ is a binary operation on $\mathcal{V}_0$
satisfying the adjointness relation:
\begin{description} [font=\normalfont]
\item[[\mbox{$\bigvee\{x\in \mathcal{V}_0\mid x\otimes y\sqsubseteq z\}\sqsubseteq[y,z]$}\!\!\!]]
For every $x\in \{x\in \mathcal{V}_0\mid x\otimes y\sqsubseteq z\}$,
$x\sqsubseteq [y,z]$
holds by the adjointness relation, so 
\[
	\bigvee\{x\in \mathcal{V}_0\mid x\otimes y\sqsubseteq z\}\sqsubseteq[y,z].
\]
\item[[\mbox{$[y,z]\sqsubseteq\bigvee\{x\in \mathcal{V}_0\mid x\otimes y\sqsubseteq z\}$}\!\!\!]]
$[y,z]\sqsubseteq [y,z]$
implies by the adjointness relation that $[y,z]\otimes y$ $\sqsubseteq z$
and hence
$[y,z]\in \{x\in \mathcal{V}_0\mid x\otimes y\sqsubseteq z\},$
thus 
\[
	[y,z]\sqsubseteq \bigvee\{x\in \mathcal{V}_0\mid x\otimes y\sqsubseteq z\}.
\]
\end{description}
Therefore (\ref{homAsSup}) is the only possible
definition for the operation $[-,-]$.
Conversely, (\ref{homAsSup}) implies the adjointness 
relation:
\begin{description} [font=\normalfont]
\item[[\mbox{$x\otimes y\sqsubseteq z\implies x\sqsubseteq [y,z]$}\!\!\!]]
Since $x\otimes y\sqsubseteq z$ implies 
$x \in\{x\in \mathcal{V}_0\mid x\otimes y\sqsubseteq z\}$,
\[
x\sqsubseteq \bigvee\{x\in \mathcal{V}_0\mid x\otimes y\sqsubseteq z\}=[y,z].
\]
\item[[\mbox{$x\sqsubseteq [y,z] \implies x\otimes y\sqsubseteq z$}\!\!\!]]
We first show that $[y,z]\otimes y\sqsubseteq z$:
\begin{align*}
[y,z]\otimes y 
&= \bigvee\{x\in \mathcal{V}_0\mid x\otimes y\sqsubseteq z\}\otimes y \\
&= \bigvee\{x\otimes y\mid x\otimes y\sqsubseteq z\}\\
&\sqsubseteq z.
\end{align*}
We used the assumption that $(-\otimes y)$ preserves 
suprema.
Using this fact and the monotonicity of $\otimes$, 
we conclude 
\[
	x\otimes y\sqsubseteq [y,z]\otimes y\sqsubseteq z.
\]
\end{description}

We now check that the operation $[-,-]$ defined by (\ref{homAsSup})
satisfies the axioms required for SC-Ps.
\begin{description}[font=\normalfont]
\item[[Monotonicity of \mbox{$[-,-]$}\!\!\!]]
Suppose $y\sqsupseteq y^\prime$ and 
$z\sqsubseteq z^\prime$ hold.
Then, 
\[
	\{x\in \mathcal{V}_0\mid x\otimes y\sqsubseteq z\}
	\subseteq
	\{x\in \mathcal{V}_0\mid x\otimes y^\prime\sqsubseteq z^\prime\}
\]
because $x\otimes y\sqsubseteq z$ implies
$	x\otimes y^\prime\sqsubseteq x\otimes y
	\sqsubseteq z\sqsubseteq z^\prime.$
Therefore
\[
	[y,z]=
	\bigvee\{x\in \mathcal{V}_0\mid x\otimes y\sqsubseteq z\}
	\sqsubseteq
	\bigvee\{x\in \mathcal{V}_0\mid x\otimes y^\prime\sqsubseteq z^\prime\}
	=[y^\prime,z^\prime],
\]
as required.
\item[[Composition law\!\!\!]]
The claim is 
$[y,z]\sqsubseteq [[x,y],[x,z]]$,
namely,
\[
	\bigvee\{w\in \mathcal{V}_0\mid w\otimes y\sqsubseteq z\}
	\sqsubseteq
	\bigvee\{w\in \mathcal{V}_0\mid w\otimes [x,y]\sqsubseteq [x,z]\}.
\]
Thus it suffices to show 
\begin{align*}
	\{w\in \mathcal{V}_0\mid w\otimes y\sqsubseteq z\}
	&\subseteq
	\{w\in \mathcal{V}_0\mid w\otimes [x,y]\sqsubseteq [x,z]\}\\
	&= \big\{w\mid w\otimes \bigvee\{v\mid v\otimes x\sqsubseteq y\}\sqsubseteq\bigvee\{v\mid v\otimes x\sqsubseteq z\}\big\}\\
	&= \big\{w\mid \bigvee\{w\otimes v\mid v\otimes x\sqsubseteq y\}\sqsubseteq\bigvee\{v\mid v\otimes x\sqsubseteq z\}\big\}
\end{align*}
(here we used the assumption that $(w\otimes -)=(-\otimes w)$ preserves suprema).
So let us suppose $w\otimes y\sqsubseteq z$ and aim
to show that
\[
	\{w\otimes v\mid v\otimes x\sqsubseteq y\}\subseteq\{v\mid v\otimes x\sqsubseteq z\}.
\]
This holds because for $v$ with $v\otimes x\sqsubseteq y$,
\[
	(w\otimes v)\otimes x=w\otimes(v\otimes x)
	\sqsubseteq w\otimes y\sqsubseteq z
\]
holds.
\item[[Unit law for \mbox{$[-,-]$}\!\!\!]]
The claim is $z=[e,z]$. This reduces to 
\begin{align*}
	z&=\bigvee\{x\in \mathcal{V}_0\mid x\otimes e\sqsubseteq z\}\\
	&=\bigvee\{x\in \mathcal{V}_0\mid x\sqsubseteq z\},
\end{align*}
a trivial equality.
\item[[Commutative law for \mbox{$[-,-]$}\!\!\!]]
The claim is $x\sqsubseteq[y,z]\iff y\sqsubseteq[x,z]$, 
which follows from the following proof tree:
\begin{prooftree}
	\def\fCenter{\ \sqsubseteq\ }
	\alwaysDoubleLine
	\Axiom$x\fCenter [y,z]$
	\UnaryInf$x\otimes y\fCenter z$
	\UnaryInf$y\otimes x\fCenter z$
	\UnaryInf$y\fCenter [x,z]$
\end{prooftree}
\end{description}
\item[[(i)$\implies$(iii)\!\!\!]]
The adjointness relation implies the following:
\begin{prooftree}
	\def\fCenter{\ \sqsubseteq\ }
	\alwaysDoubleLine
	\Axiom$x\fCenter [y,\bigwedge z_i]$
	\UnaryInf$x\otimes y\fCenter \bigwedge z_i$
	\UnaryInf$x\otimes y\fCenter z_i
	\phantom{[y,]}\quad (\forall i)$
	\UnaryInf$x\fCenter [y,z_i]\quad (\forall i)$
\end{prooftree}
Thus $[y,\bigwedge z_i]=\bigwedge [y,z_i]$,
as required.
\item[[(iii)$\implies$(i)\!\!\!]]
First we claim that in order to make the 
adjointness relation
\begin{prooftree}
	\def\fCenter{\ \sqsubseteq\ }
	\alwaysDoubleLine
	\Axiom$x\otimes y\fCenter z$
	\UnaryInf$x\fCenter [y,z]$
\end{prooftree}
hold for some binary operation $\otimes$, 
the value of $x\otimes y$ must be the infimum of $z$'s
satisfying $x\sqsubseteq [y,z]$, namely,
\begin{align}\label{tenAsInf}
	x\otimes y=\bigwedge\{z\in \mathcal{V}_0\mid x\sqsubseteq [y,z]\}.
\end{align}
Note that the infimum certainly exists since 
$\mathcal{V}_0$
is assumed to be a complete lattice.
Suppose $\otimes$ is a binary operation on $\mathcal{V}_0$
satisfying the adjointness relation:
\begin{description} [font=\normalfont]
\item[[\mbox{$x\otimes y\sqsubseteq \bigwedge\{z\in \mathcal{V}_0\mid x\sqsubseteq [y,z]\}$}\!\!\!]]
For every 
$z\in \{z\in \mathcal{V}_0\mid x\sqsubseteq [y,z]\}$,
$x\otimes y\sqsubseteq z$
holds by the adjointness relation, so 
\[
	x\otimes y\sqsubseteq \bigwedge\{z\in \mathcal{V}_0\mid x\sqsubseteq [y,z]\}.
\]
\item[[\mbox{$\bigwedge\{z\in \mathcal{V}_0\mid x\sqsubseteq [y,z]\}\sqsubseteq x\otimes y$}\!\!\!]]
$x\otimes y\sqsubseteq x\otimes y$
implies by the adjointness relation that
$x\sqsubseteq$ $[y,x\otimes y]$
and hence
$x\otimes y\in \{z\in \mathcal{V}_0\mid x\sqsubseteq [y,z]\},$
thus 
\[
	\bigwedge\{z\in \mathcal{V}_0\mid x\sqsubseteq [y,z]\}\sqsubseteq x\otimes y.
\]
\end{description}
Therefore (\ref{tenAsInf}) is the only possible 
definition for the operation $\otimes$.
Conversely, (\ref{tenAsInf}) implies the adjointness 
relation:
\begin{description} [font=\normalfont]
\item[[\mbox{$x\sqsubseteq [y,z] \implies x\otimes y\sqsubseteq z$}\!\!\!]]
Since $x\sqsubseteq [y,z]$ implies 
$z \in\{z\in \mathcal{V}_0\mid x\sqsubseteq [y,z]\}$,
\[
x\otimes y=\bigwedge\{z\in \mathcal{V}_0\mid x\sqsubseteq [y,z]\} \sqsubseteq z.
\]
\item[[\mbox{$x\otimes y\sqsubseteq z\implies x\sqsubseteq [y,z]$}\!\!\!]]
We first show that $x\sqsubseteq [y,x\otimes y]$:
\begin{align*}
[y,x\otimes y]
&= [y,\bigwedge\{z\in \mathcal{V}_0\mid x\sqsubseteq [y,z]\}] \\
&= \bigwedge\{[y,z]\mid x\sqsubseteq [y,z]\}\\
&\sqsupseteq x.
\end{align*}
We used the assumption that $[y,-]$ preserves 
infima.
Using this fact and the monotonicity of $[-,-]$, we conclude 
\[
	x\sqsubseteq [y,x\otimes y]\sqsubseteq [y,z].
\]
\end{description}
We now check that the operation $\otimes$ defined by
(\ref{tenAsInf})
satisfies the axioms required for SM-Ps.
\begin{description}[font=\normalfont]
\item[[Monotonicity of $\otimes$\!\!\!]]
Suppose $x\sqsubseteq x^\prime$ and 
$y\sqsubseteq y^\prime$ hold.
Then, 
\[
	\{z\in \mathcal{V}_0\mid x^\prime\sqsubseteq [y^\prime,z]\}\subseteq
	\{z\in \mathcal{V}_0\mid x\sqsubseteq [y,z]\}
\]
because $x^\prime\sqsubseteq [y^\prime,z]$ implies
$	x\sqsubseteq x^\prime\sqsubseteq [y^\prime,z]\sqsubseteq [y,z].$
Therefore
\[
	x\otimes y=
	\bigwedge\{z\in \mathcal{V}_0\mid x\sqsubseteq [y,z]\}
	\sqsubseteq
	\bigwedge\{z\in \mathcal{V}_0\mid x^\prime\sqsubseteq [y^\prime,z]\}
	=x^\prime\otimes y^\prime,
\]
as required.
\item[[Associative law\!\!\!]]
The claim is $x\otimes(y\otimes z)=(x\otimes y)\otimes z$,
namely,
\[
	\bigwedge\{w\in \mathcal{V}_0\mid x\sqsubseteq [y\otimes z,w]\}
	=\bigwedge\{w\in \mathcal{V}_0\mid x\otimes y\sqsubseteq [z,w]\}.
\]
We prove this by showing
$\{w\in \mathcal{V}_0\mid x\sqsubseteq [y\otimes z,w]\}=\{w\in \mathcal{V}_0\mid x\otimes y\sqsubseteq [z,w]\}$,
or more explicitly,
\begin{align}\label{assoc1}
	\big\{w\mid x\sqsubseteq [\bigwedge\{v\mid y\sqsubseteq [z,v]\},w]\big\}=\big\{w\mid \bigwedge\{v\mid x\sqsubseteq [y,v]\}\sqsubseteq [z,w]\big\}.
\end{align}
Note that by the commutative law for $[-,-]$, (\ref{assoc1})
is equivalent to
\begin{align}\label{assoc2}
	\big\{w\mid \bigwedge\{v\mid y\sqsubseteq [z,v]\}\sqsubseteq [x,w]\big\}=\big\{w\mid z\sqsubseteq [\bigwedge\{v\mid x\sqsubseteq [y,v]\},w]\big\}.
\end{align}
\begin{description}[font=\normalfont]
\item[[(LHS)$\subseteq$(RHS)\!\!\!]]
We show this in the form of (\ref{assoc1}).
It suffices to show 
\begin{align*}
x\sqsubseteq [\bigwedge\{v\mid y\sqsubseteq [z,v]\},w]\implies& [z,w]\in \{v\mid x\sqsubseteq [y,v]\}\\
\big(\iff&  x\sqsubseteq [y,[z,w]]\;\,\big)
\end{align*}
and this follows from
\begin{align*}
[\bigwedge\{v\mid y\sqsubseteq [z,v]\},w]
&\sqsubseteq [[z,\bigwedge\{v\mid y\sqsubseteq [z,v]\}],[z,w]]\\
&= [\bigwedge\{[z,v]\mid y\sqsubseteq [z,v]\},[z,w]]\\
&\sqsubseteq [y,[z,w]],
\end{align*}
where the first inequality is an instance of the
composition law,
the equality follows from the inf-preserving property of $[z,-]$,
and the last inequality from $\bigwedge\{[z,v]\mid y\sqsubseteq [z,v]\}\sqsupseteq y$ and the monotonicity
of $[-,-]$.
\item[[(LHS)$\supseteq$(RHS)\!\!\!]]
In this case, we refer to the form (\ref{assoc2}).
Now, a sufficient condition is
\begin{align*}
z\sqsubseteq [\bigwedge\{v\mid x\sqsubseteq [y,v]\},w]\implies& [x,w]\in \{v\mid y\sqsubseteq [z,v]\}\\
\big(\iff&  y\sqsubseteq [z,[x,w]]\\
\iff&  z\sqsubseteq [y,[x,w]]\;\,\big)
\end{align*}
and this follows from
\begin{align*}
[\bigwedge\{v\mid x\sqsubseteq [y,v]\},w]
&\sqsubseteq [[x,\bigwedge\{v\mid x\sqsubseteq [y,v]\}],[x,w]]\\
&= [\bigwedge\{[x,v]\mid x\sqsubseteq [y,v]\},[x,w]]\\
&= [\bigwedge\{[x,v]\mid y\sqsubseteq [x,v]\},[x,w]]\\
&\sqsubseteq [y,[x,w]],
\end{align*}
where the first inequality is an instance of
the composition law, 
the first equality follows from the inf-preserving property of
$[z,-]$, and the last inequality from
$\bigwedge\{[x,v]\mid y\sqsubseteq [x,v]\}\sqsupseteq y$
and the monotonicity of $[-,-]$.
\end{description}
\item[[Unit law for $\otimes$\!\!\!]]
The claim is $e\otimes x=x=x\otimes e$.
Assuming the commutative law for $\otimes$ proved below, it suffices 
to show $x=x\otimes e$, namely,
\begin{align*}
	x&=\bigwedge\{z\in \mathcal{V}_0\mid x\sqsubseteq [e,z]\}\\
	&=\bigwedge\{z\in \mathcal{V}_0\mid x\sqsubseteq z\},
\end{align*}
which obviously holds.
\item[[Commutative law for $\otimes$\!\!\!]]
The claim is $x\otimes y=y\otimes x$, which follows from
the following proof tree:
\begin{prooftree}
	\def\fCenter{\ \sqsubseteq\ }
	\alwaysDoubleLine
	\Axiom$x\otimes y\fCenter z$
	\UnaryInf$x\fCenter [y,z]$
	\UnaryInf$y\fCenter [x,z]$
	\UnaryInf$y\otimes x\fCenter z$
\end{prooftree}
where $z$ is an arbitrary element of $\mathcal{V}_0$.
\end{description}
\end{description}
\end{proof}

By virtue of this proposition, we will usually
check only the condition (ii) to prove that a 
particular system is an SMC-CL.
However, we will not ignore internal-hom's,
since writing them down explicitly often clarifies
the situation.

We complete the limit-preserving properties of
$\otimes$ and $[-,-]$ of SMC-CLs
by the following proposition:

\begin{prop}\label{prop:lim_ten_hom} 
Let $\mathcal{V}=(\mathcal{V}_0,\otimes,[-,-],e)$
be an SMC-CL.
Then the following hold:
\begin{enumerate}
	\item
	For each $x\in \mathcal{V}_0$, $(x\otimes -)$
	preserves suprema.
	\item
	For each $z\in \mathcal{V}_0$, $[-,z]$
	turns suprema into infima.
\end{enumerate}
\end{prop}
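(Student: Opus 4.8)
The plan is to derive both statements formally from the commutative (symmetric) structure together with Proposition~\ref{prop:SMCCL}; neither requires genuinely new computation, and the only thing to watch is the order-reversal in the first slot of $[-,-]$.

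First I would dispatch (i). By Proposition~\ref{prop:SMCCL}, the hypothesis that $\mathcal{V}$ is an SMC-CL gives condition (ii) there, which in particular asserts that $(-\otimes y)$ preserves suprema for every $y\in\mathcal{V}_0$. Since $\otimes$ is commutative, $x\otimes w=w\otimes x$ for all $w$, so the operation $(x\otimes -)$ is literally the operation $(-\otimes x)$; hence it preserves suprema as well. This step is immediate.

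For (ii) the plan is a single proof-tree chain that moves the supremum out of the (contravariant) first argument of $[-,-]$ by means of the commutative law for $[-,-]$, where on the right-hand side the supremum can be handled by its universal property:
\begin{prooftree}
	\def\fCenter{\ \sqsubseteq\ }
	\alwaysDoubleLine
	\Axiom$x\fCenter [\bigvee y_i,z]$
	\UnaryInf$\bigvee y_i\fCenter [x,z]$
	\UnaryInf$y_i\fCenter [x,z]\quad(\forall i)$
	\UnaryInf$x\fCenter [y_i,z]\quad(\forall i)$
\end{prooftree}
Here the top and bottom equivalences are instances of the commutative law for $[-,-]$, and the middle one is the defining property of the supremum $\bigvee y_i$. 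Reading the tree off, $x\sqsubseteq[\bigvee y_i,z]$ holds iff $x\sqsubseteq[y_i,z]$ for all $i$; by the characterization of infima this says exactly that $[\bigvee y_i,z]=\bigwedge[y_i,z]$, i.e.\ $[-,z]$ turns the supremum $\bigvee y_i$ into the infimum $\bigwedge[y_i,z]$.

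I do not expect a real obstacle: the content is entirely formal once one recognizes that symmetry does all the work. The only point demanding care is the bookkeeping of variance --- because $[-,z]$ is antitone in its argument (monotonicity of $[-,-]$ with the reversed order in the first slot), suprema must be sent to infima rather than to suprema, and one must check that the two applications of the commutative law correctly bracket the single use of the universal property of $\bigvee$.
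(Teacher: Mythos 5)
Your proposal is correct and follows essentially the same route as the paper: part (ii) is verbatim the paper's proof tree (commutative law, universal property of the supremum, commutative law again, then read off the infimum characterization), and part (i) rests on the same two facts the paper uses, namely commutativity of $\otimes$ and the sup-preservation of $(-\otimes x)$ supplied by Proposition~\ref{prop:SMCCL}. Your phrasing of (i) — identifying $(x\otimes -)$ with $(-\otimes x)$ as literally the same map — is a slightly more direct packaging of the paper's proof-tree argument, but it is the same idea.
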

\begin{proof}
\begin{description}[font=\normalfont]
\item[[(i)\!\!\!]]
The commutativity of $\otimes$ 
and sup-preserving property of $(-\otimes x)$
imply the following:
\begin{prooftree}
	\def\fCenter{\ \sqsubseteq\ }
	\alwaysDoubleLine
	\Axiom$x\otimes(\bigvee y_i)\fCenter z$
	\UnaryInf$(\bigvee y_i)\otimes x\fCenter z$
	\UnaryInf$y_i\otimes x\fCenter
	z\quad\;\; (\forall i)$
	\UnaryInf$x\otimes y_i\fCenter
		z\quad\;\; (\forall i)$
\end{prooftree}
Thus $x\otimes(\bigvee y_i)=\bigvee (x\otimes y_i)$,
as required.
\item[[(ii)\!\!\!]]
The commutativity of $[-,-]$ implies the following:
\begin{prooftree}
	\def\fCenter{\ \sqsubseteq\ }
	\alwaysDoubleLine
	\Axiom$x\fCenter [(\bigvee y_i),z]$
	\UnaryInf$\bigvee y_i\fCenter [x,z]$
	\UnaryInf$y_i\fCenter [x,z]\phantom{y_i}\quad (\forall i)$
	\UnaryInf$x\fCenter
	[y_i,z]\phantom{x}\quad (\forall i)$
\end{prooftree}
Thus $[(\bigvee y_i),z]=\bigwedge [y_i,z]$,
as required.
\end{description}
\end{proof}

Now we proceed to see some examples. 
We begin with a toy example.

\begin{ex}[\cite{Law73}]\label{ex:2} 
 The quadruple
$\textbf{2}=(\{\true,\false\},\&,\supset,\true)$
is defined as follows:
\begin{itemize}
\item
$\{\true,\false\}=(\{\true,\false\},\vdash)$ is the 
two-element poset of truth values 
ordered by entailment $\vdash$; see the table below for detail (the symbol $\fullmoon$
denotes that the relation holds, and 
$\times$ that the relation does not hold):
\begin{table}[H]
	\centering
	\begin{tabular}{c c|c c}
	\multicolumn{2}{c|}{\multirow{2}{*}{$x\vdash y$}}& \multicolumn{2}{c}{$y$} \\	    
    &  & $\true$ & $\false$ \\ \hline
    \multirow{2}{*}{$x$} & $\true$ & $\fullmoon$ & $\times$ \\
    &$\false$ & $\fullmoon$ & $\fullmoon$
	\end{tabular}
\end{table}
\item
$\&$ is the binary operation defined as the usual
conjunction, as in the following table:
\begin{table}[H]
	\centering
	\begin{tabular}{c c|c c}
	\multicolumn{2}{c|}{\multirow{2}{*}{$x\myand y$}}& \multicolumn{2}{c}{$y$} \\	    
    &  & $\true$ & $\false$ \\ \hline
    \multirow{2}{*}{$x$} &$\true$ & $\true$ & $\false$ \\
    &$\false$ & $\false$ & $\false$
	\end{tabular}
\end{table}
\item
$\supset$ is the binary operation defined as the usual
implication:
\begin{table}[H]
	\centering
	\begin{tabular}{c c|c c}
	\multicolumn{2}{c|}{\multirow{2}{*}{$x\supset y$}}& \multicolumn{2}{c}{$y$} \\	    
    &  & $\true$ & $\false$ \\ \hline
    \multirow{2}{*}{$x$} &$\true$ & $\true$ & $\false$ \\
    &$\false$ & $\true$ & $\true$
	\end{tabular}
\end{table}
\end{itemize}
One can easily verify that $\textbf{2}$ is 
an SMC-CL.
\end{ex}

The current thesis is largely indebted to 
William Lawvere's 1973
paper \cite{Law73}.
In this paper, he observed that metric spaces
are a special kind of enriched categories,
using the following category (in fact, poset)
$\overline{\mathbb{R}}_+$:

\begin{ex}\label{ex:Rplus} 
The quadruple
$\overline{\mathbb{R}}_+=(\mathbb{R}_+\cup\{\infty\},+,\dotminus,0)$
is defined as follows:
\begin{itemize}
\item
$\mathbb{R}_+\cup\{\infty\}=(\mathbb{R}_+\cup\{\infty\},\geq)$
is the set of nonnegative real numbers 
$\mathbb{R}_+=\{x\in\mathbb{R}\mid$ $x\geq 0\}$
with an additional element $\infty$, 
ordered by the \textit{opposite} $\geq$ of 
the usual ordering $\leq$ (extended from $\mathbb{R}_+$
to $\mathbb{R}_+\cup\{\infty\}$ in an obvious way).
More explicitly, supposing $s,t\in \mathbb{R}_+$ (thus we
already know whether $s\geq t$ holds or not), the order is
given by the following table: 
\begin{table}[H]
	\centering
	\begin{tabular}{c c|c c}
	\multicolumn{2}{c|}{\multirow{2}{*}{$x\geq y$}} &\multicolumn{2}{c}{$y$} \\	    
    &  & $t$ & $\infty$\\ \hline
    \multirow{2}{*}{$x$} &$s$ & $s\geq t$ & $\times$ \\
    &$\infty$ & $\fullmoon$ & $\fullmoon$ 
	\end{tabular}
\end{table}
\item
$+$ is the binary operation defined as the
natural extension of the usual addition:
\begin{table}[H]
	\centering
	\begin{tabular}{c c|c c}
	\multicolumn{2}{c|}{\multirow{2}{*}{$x+y$}}& \multicolumn{2}{c}{$y$} \\	    
    &   & $t$ & $\infty$\\ \hline
    \multirow{2}{*}{$x$} &$s$ & $s+t$ & $\infty$ \\
    &$\infty$  & $\infty$ & $\infty$ 
	\end{tabular}
\end{table}
\item
$\dotminus$ is the binary operation called the truncated
subtraction. To define this, we first set
$t\dotminus s=\max\{t-s,0\}$ for $s,t\in \mathbb{R}_+$.
Then the domain of $\dotminus$ is extended to 
$\mathbb{R}_+\cup\{\infty\}$ as follows
(note the order of the arguments; we set $[x,y]=y\dotminus x$):
\begin{table}[H]
	\centering
	\begin{tabular}{c c|c c}
	\multicolumn{2}{c|}{\multirow{2}{*}{$y\dotminus x$}}& \multicolumn{2}{c}{$y$} \\	    
	&   & $t$ & $\infty$\\ \hline
	\multirow{2}{*}{$x$} &$s$ & $t\dotminus s$ & $\infty$ \\
	&$\infty$  & $0$ & $0$
	\end{tabular}
\end{table}
\end{itemize}
Then it turns out that $\overline{\mathbb{R}}_+$ is an
SMC-CL.

The reader may feel that the definition of  
the extended operations is somewhat arbitrary, 
especially in the case $\infty\dotminus\infty=0$.
We first note that, contrary to such a view,
\textit{in order
$\overline{\mathbb{R}}_+$ 
to be an SMC-CL,
the extensions of $+$ and $\dotminus$ are 
completely determined as soon as we extend the order}
$\geq$.
To see this, recall from Propositions 
\ref{prop:SMCCL} and \ref{prop:lim_ten_hom} that
necessary conditions for the quadruple 
$\overline{\mathbb{R}}_+=(\mathbb{R}_+\cup\{\infty\},+,\dotminus,0)$
to be an SMC-CL are that 
{\allowdisplaybreaks
\begin{align}
	\label{lim_ten1} & x+(-) \text{ preserves suprema for each } 
	x\in \mathbb{R}_+\cup\{\infty\};\\
	\label{lim_ten2} & (-)+y \text{ preserves suprema for each }
	y\in \mathbb{R}_+\cup\{\infty\};\\
	\label{lim_hom1} & (-)\dotminus x \text{ preserves infima for each }
	x\in \mathbb{R}_+\cup\{\infty\};\\
	\label{lim_hom2} & y\dotminus (-) \text{ turns suprema into infima for each }
	y\in \mathbb{R}_+\cup\{\infty\}.
\end{align}}
\!\!Now that
we are working with the opposite ordering $\geq$,
which means our supremum $\bigvee$ corresponds to the 
usual infimum (for which we use a different 
symbol $\inf$) and dually, we obtain the
following for each 
$x,y\in \mathbb{R}_+\cup \{\infty\}$ and
$s\in \mathbb{R}_+$:
{\allowdisplaybreaks
\begin{align*}
	& x+\infty = x+\inf \emptyset 
	\overset{\text{(\ref{lim_ten1})}}=
	\inf \emptyset = \infty; \\
	&\infty+y = \inf \emptyset+y 
	\overset{\text{(\ref{lim_ten2})}}=
	\inf \emptyset = \infty; \\
	&\infty\dotminus s = \sup_{t\in \mathbb{R}_+}\{t\} \dotminus s 
	\overset{\text{(\ref{lim_hom1})}}=
	\sup_{t\in \mathbb{R}_+}\{t\dotminus s\} = \infty;\\
	& y\dotminus \infty = y\dotminus \inf\emptyset
	\overset{\text{(\ref{lim_hom2})}}= 
	\sup \emptyset = 0. 
\end{align*}}
\!\!These specify the extensions.

We then show that $\overline{\mathbb{R}}_+$ is indeed
an SMC-CL,
following the condition (ii) of 
Proposition~\ref{prop:SMCCL}.
\begin{description}[font=\normalfont]
\item[[$\mathbb{R}_+\cup\{\infty\}$ is a complete lattice\!\!\!]]
A well-known fact.
\item[[Monotonicity of $+$\!\!\!]]
Suppose $x\geq x^\prime$ and $y\geq y^\prime$ hold.
If all of $x,x^\prime,y,y^\prime$ are elements of
$\mathbb{R}_+$, then 
$x+y\geq x^\prime +y^\prime$ holds.
Otherwise, either $x$ or $y$ is $\infty$; 
thus $x+y=\infty$ and again
$x+y\geq x^\prime +y^\prime$ holds.
\item[[Associative law\!\!\!]]
The claim is $x+(y+z)=(x+y)+z$. 
If all of $x,y,z$ are elements of
$\mathbb{R}_+$, the claim reduces to an
elementary fact.
Otherwise, one of $x,y,z$ is $\infty$,
and both of $x+(y+z)$ and $(x+y)+z$
are equal to $\infty$.
\item[[Unit law\!\!\!]]
The claim is $0+x=x=x+0$. 
If $x\in \mathbb{R}_+$, it certainly holds; 
if not, $0+\infty=\infty=\infty+0$ and again it holds. 
\item[[Commutative law\!\!\!]]
The claim is $x+y=y+x$. 
If $x,y\in \mathbb{R}_+$, it holds. 
Otherwise, both sides are
equal to $\infty$.
\item[[$(-)+y$ preserves suprema\!\!\!]]
If $y=\infty$, the function $(-)+y$ constantly returns
$\infty$, which is also the least element
(the supremum of the empty set).
Thus it preserves all suprema.

Suppose $y\in \mathbb{R}_+$ and take an 
arbitrary family $x_i\in \mathbb{R}_+\cup \{\infty\}$ 
$(i\in I)$.
We aim to prove 
\begin{align}\label{sup_preserve}
	\inf_{i\in I}\{x_i\}+y=\inf_{i\in I}\{x_i+y\}.
\end{align}

If $\inf_{i\in I}\{x_i\}=\infty$, then
for all $i\in I$, $x_i=\infty$.
Therefore for all $i\in I$, $x_i+y=\infty$
and both sides of (\ref{sup_preserve}) evaluate
to $\infty$.

If $\inf_{i\in I}\{x_i\}\in \mathbb{R}_+$, then 
there exists $i\in I$ with $x_i\in \mathbb{R}_+$, 
and we may simply omit those $x_i$'s with 
$x_i=\infty$, obtaining a new family 
$x_i\in \mathbb{R}_+$ $(i\in I\neq \emptyset)$
of nonnegative real numbers.
Now (\ref{sup_preserve}) essentially reduces to
the continuity of $+$.
\end{description}

Finally, we show that $\dotminus$ is indeed the 
internal-hom, by checking 
the adjointness relation.
\begin{description}[font=\normalfont]
\item[[$x+y\geq z\implies x\geq z\dotminus y\!\!\!$]]
The monotonicity of $\dotminus$ is easily verified.
Therefore, $x+y\geq z$ implies 
$(x+y)\dotminus y\geq z\dotminus y$.
So it suffices to show $x\geq (x+y)\dotminus y$, 
which follows from the table below:
\begin{table}[H]
	\centering
	\begin{tabular}{c c|c c}
	\multicolumn{2}{c|}{\multirow{2}{*}{$(x+y)\dotminus y$}} &\multicolumn{2}{c}{$y$} \\	    
    &  & $t$ & $\infty$\\ \hline
    \multirow{2}{*}{$\;\,\quad x$} &$s$ & $s$ & $0$ \\
    &$\infty$ & $\infty$ & $0$ 
	\end{tabular}
\end{table}
\item[[$x\geq z\dotminus y\implies x+y\geq z\!\!\!$]]
By the monotonicity of $+$, $x\geq z\dotminus y$ implies
$x+y\geq (z\dotminus y)+y$.
So it suffices to show $(z\dotminus y)+y\geq z$,
which follows from the table below:
\begin{table}[H]
	\centering
	\begin{tabular}{c c|c c}
	\multicolumn{2}{c|}{\multirow{2}{*}{$(z\dotminus y)+y$}} &\multicolumn{2}{c}{$z$} \\	    
    &  & $t$ & $\infty$\\ \hline
    \multirow{2}{*}{$\;\,\quad y$} &$s$ & $\max\{s,t\}$ & $\infty$ \\
    &$\infty$ & $\infty$ & $\infty$ 
	\end{tabular}
\end{table}
\end{description}
\end{ex}

Several variants of $\overline{\mathbb{R}}_+$
follow.
First we observe that a similar construction
works when $\mathbb{R}$ is replaced by $\mathbb{Z}$. 

\begin{ex} 
The quadruple
$\overline{\mathbb{Z}}_+=(\mathbb{Z}_+\cup\{\infty\},+,\dotminus,0)$
is defined as follows:
\begin{itemize}
\item
$\mathbb{Z}_+\cup\{\infty\}=(\mathbb{Z}_+\cup\{\infty\},\geq)$
is the set of nonnegative integers 
$\mathbb{Z}_+=\{x\in\mathbb{Z}\mid$ $x\geq 0\}$
with an additional element $\infty$, 
ordered by $\geq$, the order relation
similar to that on 
$\mathbb{R}_+\cup\{\infty\}$. 
For $s,t\in \mathbb{Z}_+$, it is given as follows: 
\begin{table}[H]
	\centering
	\begin{tabular}{c c|c c}
	\multicolumn{2}{c|}{\multirow{2}{*}{$x\geq y$}} &\multicolumn{2}{c}{$y$} \\	    
    &  & $t$ & $\infty$\\ \hline
    \multirow{2}{*}{$x$} &$s$ & $s\geq t$ & $\times$ \\
    &$\infty$ & $\fullmoon$ & $\fullmoon$ 
	\end{tabular}
\end{table}
\item
$+$ is an extension of the usual addition:
\begin{table}[H]
	\centering
	\begin{tabular}{c c|c c}
	\multicolumn{2}{c|}{\multirow{2}{*}{$x+y$}}& \multicolumn{2}{c}{$y$} \\	    
    &   & $t$ & $\infty$\\ \hline
    \multirow{2}{*}{$x$} &$s$ & $s+t$ & $\infty$ \\
    &$\infty$  & $\infty$ & $\infty$ 
	\end{tabular}
\end{table}
\item
$\dotminus$ is an extension of
the operation defined as
$t\dotminus s=\max\{t-s,0\}$:
\begin{table}[H]
	\centering
	\begin{tabular}{c c|c c}
	\multicolumn{2}{c|}{\multirow{2}{*}{$y\dotminus x$}}& \multicolumn{2}{c}{$y$} \\	    
	&   & $t$ & $\infty$\\ \hline
	\multirow{2}{*}{$x$} &$s$ & $t\dotminus s$ & $\infty$ \\
	&$\infty$  & $0$ & $0$
	\end{tabular}
\end{table}
\end{itemize}
$\overline{\mathbb{Z}}_+$ is an SMC-CL;
its proof is obtained by a slight modification
to that for $\overline{\mathbb{R}}_+$.
Again we have no choice for extensions of operations $+$ and 
$\dotminus$.
\end{ex}

The reader may notice that the cases for 
$\mathbb{Z}$ and for $\mathbb{R}$ are almost  
completely parallel;
such a phenomenon occurs frequently in what follows. 
We henceforth let the
symbol $\mathbb{K}$ denote either
$\mathbb{Z}$ or $\mathbb{R}$ and treat both cases 
simultaneously whenever convenient.
This convention lasts throughout the thesis.

Next example introduces the most 
important enriching posets in this thesis.
The definition when $\mathbb{K}=\mathbb{R}$
appears in \cite{Law84}.

\begin{ex}\label{ex:overlineK} 
The quadruple
$\overline{\mathbb{K}}=(\mathbb{K}\cup\{-\infty,\infty\},+,-,0)$
is defined as follows:
\begin{itemize}
\item
$\mathbb{K}\cup\{-\infty,\infty\}=(\mathbb{K}\cup\{-\infty,\infty\},\geq)$
is the set $\mathbb{K}$
with two additional elements $-\infty$ and $\infty$, 
ordered by $\geq$. 
Supposing $s,t\in \mathbb{K}$, the relation is
specified by the following table: 
\begin{table}[H]
	\centering
	\begin{tabular}{c c|c c c}
	\multicolumn{2}{c|}{\multirow{2}{*}{$x\geq y$}}& & $y$ & \\	    
    &  & $-\infty$ & $t$ & $\infty$\\ \hline
    \multirow{3}{*}{$x$} & $-\infty$ & $\fullmoon$ & $\times$ & $\times$ \\
    &$s$ & $\fullmoon$ & $s\geq t$ & $\times$ \\
    &$\infty$ & $\fullmoon$ & $\fullmoon$ & $\fullmoon$ 
	\end{tabular}
\end{table}
\item
$+$ is the binary operation defined as an extension of
the usual addition:
\begin{table}[H]
	\centering
	\begin{tabular}{c c|c c c}
	\multicolumn{2}{c|}{\multirow{2}{*}{$x+y$}}& & $y$ & \\	    
    &  & $-\infty$ & $t$ & $\infty$\\ \hline
    \multirow{3}{*}{$x$} & $-\infty$ & $-\infty$ & $-\infty$ & $\infty$ \\
    &$s$ & $-\infty$ & $s+t$ & $\infty$ \\
    &$\infty$ & $\infty$ & $\infty$ & $\infty$ 
	\end{tabular}
\end{table}
\item
$-$ is the binary operation defined as an extension of
the usual subtraction (the order of the arguments
is similar to that of the truncated subtraction,
namely, $[x,y]=y-x$):
\begin{table}[H]
	\centering
	\begin{tabular}{c c|c c c}
	\multicolumn{2}{c|}{\multirow{2}{*}{$y-x$}}& & $y$ & \\	    
    &  & $-\infty$ & $t$ & $\infty$\\ \hline
    \multirow{3}{*}{$x$} & $-\infty$ & $-\infty$ & $\infty$ & $\infty$ \\
    &$s$ & $-\infty$ & $t-s$ & $\infty$ \\
    &$\infty$ & $-\infty$ & $-\infty$ & $-\infty$ 
	\end{tabular}
\end{table}
\end{itemize}
We claim that $\overline{\mathbb{K}}$ is an SMC-CL.

First we see that the extensions of 
$+$ and $-$ are automatically derived by
a method similar to one we used in
the $\overline{\mathbb{R}}_+$-case.
Let us write down the results of 
Propositions \ref{prop:SMCCL} and 
\ref{prop:lim_ten_hom} concerning 
necessary conditions for the quadruple 
$\overline{\mathbb{K}}=(\mathbb{K}\cup\{-\infty,\infty\},+,-,0)$
to be an SMC-CL:
{\allowdisplaybreaks
\begin{align}
	\label{zlim_ten1} & x+(-) \text{ preserves suprema for each } 
	x\in \mathbb{K}\cup\{-\infty,\infty\};\\
	\label{zlim_ten2} & (-)+y \text{ preserves suprema for each }
	y\in \mathbb{K}\cup\{-\infty,\infty\};\\
	\label{zlim_hom1} & (-)- x \text{ preserves infima for each }
	x\in \mathbb{K}\cup\{-\infty,\infty\};\\
	\label{zlim_hom2} & y- (-) \text{ turns suprema into infima for each }
	y\in \mathbb{K}\cup\{-\infty,\infty\}.
\end{align}}
\!\!Since the ordering $\geq$ is again the reverse of
the usual one,
we introduce the notation in which 
we denote suprema and infima with respect to $\geq$
by $\bigvee$ and $\bigwedge$,
whereas infima and suprema with respect to the usual 
ordering $\leq$ by $\inf$ and $\sup$, respectively.
We obtain, for each 
$x,y\in \mathbb{K}\cup \{-\infty,\infty\}$ and
$s,t\in \mathbb{K}$,
the following:
{\allowdisplaybreaks
\begin{align}
	& x+\infty = x+\inf \emptyset
	\overset{\text{(\ref{zlim_ten1})}}=
	\inf \emptyset = \infty; \nonumber \\
	\label{sPlusMinusInfty}& s+(-\infty) = s+\inf_{t\in \mathbb{K}}\{t\} 
	\overset{\text{(\ref{zlim_ten1})}}=
	\inf_{t\in \mathbb{K}}\{s+t\}  = -\infty; \\
	&\infty+y = \inf \emptyset+y 
	\overset{\text{(\ref{zlim_ten2})}}=
	\inf \emptyset = \infty;\nonumber \\
	&(-\infty)+t = \inf_{s\in \mathbb{K}}\{s\}+t
	\overset{\text{(\ref{zlim_ten2})}}=
	\inf_{s\in \mathbb{K}}\{s+t\}  = -\infty;\nonumber \\
	\label{inftyMinusS}&\infty - s = \sup_{t\in \mathbb{K}}\{t\} - s 
	\overset{\text{(\ref{zlim_hom1})}}=
	\sup_{t\in \mathbb{K}}\{t-s\} = \infty;\\
	&(-\infty) - x = \sup\emptyset - x 
	\overset{\text{(\ref{zlim_hom1})}}=
	\sup\emptyset = -\infty;\nonumber\\
	& y- \infty = y-\inf\emptyset
	\overset{\text{(\ref{zlim_hom2})}}= 
	\sup \emptyset = -\infty;\nonumber\\
	& t- (-\infty) = t-\inf_{s\in \mathbb{K}}\{s\}
	\overset{\text{(\ref{zlim_hom2})}}= 
	\sup_{s\in \mathbb{K}}\{t-s\} = \infty.\nonumber
\end{align}}
\!\!Finally, we define two remaining cases:
{\allowdisplaybreaks
\begin{align*}
	&(-\infty)+(-\infty)=\inf_{s\in \mathbb{K}}\{s\}+(-\infty)
	\overset{\text{(\ref{zlim_ten2})}}=
	\inf_{s\in \mathbb{K}}\{s+(-\infty)\}\nonumber
	\overset{\text{(\ref{sPlusMinusInfty})}}=
	\inf_{s\in \mathbb{K}}\{-\infty\} = -\infty;\\
	&\infty-(-\infty)=\infty-\inf_{s\in \mathbb{K}}\{s\}
	\overset{\text{(\ref{zlim_hom2})}}=
	\sup_{s\in \mathbb{K}}\{\infty -s\}
	\overset{\text{(\ref{inftyMinusS})}}=
	\sup_{s\in \mathbb{K}}\{\infty\}=\infty.
\end{align*}}
\!\!Now the extensions have completed.

The proof that $\overline{\mathbb{K}}$ is an SMC-CL
is simply a little
bigger version of the case analyses we did in
Example \ref{ex:Rplus}.
\end{ex}

The last example below is of a bit different flavor: 
\begin{ex} 
Let $L=(L_0,\supset)$ be a complete Heyting algebra,
that is, a complete lattice $L_0=(L_0,\sqsubseteq)$ equipped with a 
binary operation 
$\supset \colon L_0^\op\times L_0\longrightarrow L_0$
called the implication, with the following
property:
\begin{prooftree}
	\def\fCenter{\ \sqsubseteq\ }
	\alwaysDoubleLine
	\Axiom$x\land y\fCenter z$
	\UnaryInf$x\fCenter y\supset z$
\end{prooftree}
Then, the quadruple $L=(L_0,\land,\supset,\top)$, where
$\top$ is the largest element of $L_0$, is an SMC-CL.
Such SMC-CLs
(in which the tensor product is given by the 
meet $\land$) are called 
Cartesian closed complete lattices,
which are nevertheless essentially the same as complete
Heyting algebras.

As a special case, we have the following Cartesian
closed complete lattices: the quadruple
$\overline{\mathbb{K}}_+^\text{Cart}=(\mathbb{K}_+\cup\{\infty\},\max,\supset,0)$, 
where $\mathbb{K}_+\cup\{\infty\}$ is ordered by $\geq$,
and $\supset$ (the order of arguments is 
$[x,y]=x\supset y$) is defined as 
\[
x\supset y=
\begin{cases}
0 &(x\geq y)\\
y &(x<y).
\end{cases}
\]
This appears in \cite{Law73} when
$\mathbb{K}=\mathbb{R}$.
Just as $\overline{\mathbb{K}}_+$ 
will be used to generalize metric spaces,
we will later use $\overline{\mathbb{K}}_+^\text{Cart}$ 
to generalize \textit{ultrametric spaces}.
\end{ex}

\section{Enriched Categories}
Every poset-enriched category has a particular 
enriching poset $\mathcal{V}$ which is used to describe
its structure.
Although we introduced the notion of SMC-CLs 
in the previous section,
in the early stages of the theory of poset-enriched categories,
it suffices to require the enriching poset $\mathcal{V}$
to be an SM-P.

\begin{defn}
Let $\mathcal{V}=(\mathcal{V}_0,\otimes,e)$ be
an SM-P. 
A $\mathcal{V}$\textbf{-category}
$\mathcal{A}$ is a pair
$(\ob{\mathcal{A}},$ $\Hom_\mathcal{A})$ where
\begin{itemize}
	\item $\ob{\mathcal{A}}$ is the set of
	\textbf{objects} of $\mathcal{A}$;
	\item $\Hom_\mathcal{A}$ is a map which
	assigns for each pair $a,b$ of objects 
	of $\mathcal{A}$,
	an element $\Hom_\mathcal{A}(a,b)$ of
	$\mathcal{V}_0$ called its
	\textbf{hom-object};
\end{itemize}
such that the following axioms hold:
\begin{description}[font=\normalfont]
	\item[$\qquad$(Composition law)]
	$\Hom_\mathcal{A}(a,b)\otimes\Hom_\mathcal{A}(b,c)\sqsubseteq\Hom_\mathcal{A}(a,c)\quad(\forall a,b,c\in \ob{\mathcal{A}})$;
	\item[$\qquad$(Identity law)]
	$e\sqsubseteq\Hom_\mathcal{A}(a,a)\quad(\forall a\in \ob{\mathcal{A}})$.
\end{description}
\end{defn}
The definition is best explained by the various
examples given below.
Note the apparent conflict of the term ``composition law'';
an axiom in the definition of SC-Ps
also bears the same name.
However, we will see later that
the composition law for 
$\mathcal{V}$ can be seen as an instance of the
composition law for $\mathcal{V}$-categories, thus
justifying the terminology.

\begin{ex} 
A $\textbf{2}$-category is nothing but a 
\textit{preordered set}. 
In fact, a $\textbf{2}$-category $\mathcal{A}$ 
is a pair
$(\ob{\mathcal{A}},\preceq_\mathcal{A})$ where
\begin{itemize}
	\item $\ob{\mathcal{A}}$ is a set;
	\item $\preceq_\mathcal{A}$ takes two elements
	$a,b$ of $\ob{\mathcal{A}}$ and returns
	a truth value, which can be interpreted to 
	denote whether the relation 
	$(a\preceq_\mathcal{A} b)$
	holds or not;
\end{itemize}
such that the following axioms hold:
\begin{description}[font=\normalfont]
	\item[$\qquad$(Composition law)]
	$(a\preceq_\mathcal{A} b) \myand (b\preceq_\mathcal{A} c) \vdash (a\preceq_\mathcal{A} c)\quad(\forall a,b,c\in \ob{\mathcal{A}})$;
	\item[$\qquad$(Identity law)]
	$\true\vdash (a\preceq_\mathcal{A} a)\quad(\forall a\in \ob{\mathcal{A}})$.
\end{description}
Of course, the composition law requires
transitivity of $\preceq_\mathcal{A}$ and 
the identity law the reflexivity of
$\preceq_\mathcal{A}$.
\end{ex}

\begin{ex} 
An $\overline{\mathbb{R}}_+$-category, or a 
\textbf{Lawvere metric space},
$\mathcal{A}$ is a pair
$(\ob{\mathcal{A}},$ $d_\mathcal{A})$ where
\begin{itemize}
\item
$\ob{\mathcal{A}}$ is the set of \textbf{points} of
$\mathcal{A}$;
\item
$d_\mathcal{A}$, called the 
\textbf{distance function} of $\mathcal{A}$,
takes two points $a,b$ of 
$\mathcal{A}$ and returns a value 
$d_\mathcal{A}(a,b)\in \mathbb{R}_+\cup\{\infty\}$;
\end{itemize}
such that the following axioms hold:
\begin{description}[font=\normalfont]
	\item[$\qquad$(Composition law)]
	$d_\mathcal{A}(a,b) + d_\mathcal{A} (b,c) \geq
	d_\mathcal{A} (a,c)\quad(\forall a,b,c\in \ob{\mathcal{A}})$;
	\item[$\qquad$(Identity law)]
	$0\geq d_\mathcal{A}(a,a)\quad(\forall a\in \ob{\mathcal{A}})$.
\end{description}
Since $0\geq x$ if and only if $x=0$, 
the identity law is equivalent to 
$d_\mathcal{A}(a,a) = 0$.
Lawvere remarked in \cite{Law73} that every metric
space is an $\overline{\mathbb{R}}_+$-category.
In fact, the composition law is nothing but the 
\textit{triangle inequality},
and the identity law is also a usual requirement 
for the distance functions of metric spaces.

Note that the notion of Lawvere metric spaces
generalizes that of metric spaces in the following
points:
\begin{itemize}
\item
the distance function $d_\mathcal{A}$ is not 
necessarily	symmetric, i.e., 
there may exist points $a,b$ of $\mathcal{A}$
with $d_\mathcal{A}(a,b)\neq d_\mathcal{A}(b,a)$;
\item
the values of distance function can attain $\infty$;
\item
the points $a,b$ of $\mathcal{A}$ with 
$d_\mathcal{A}(a,b)=0$, or even 
$d_\mathcal{A}(a,b)=d_\mathcal{A}(b,a)=0$,
may be distinct.
\end{itemize}
\end{ex}

\begin{ex} 
$\overline{\mathbb{Z}}_+$-categories are
defined similarly.
One can regard them as a generalization of 
$\mathbb{Z}_+$-valued (or discrete) metric spaces.
\end{ex}

The following classes of enriched categories will be 
the main field of our study: 

\begin{ex} 
A $\overline{\mathbb{K}}$-category
$\mathcal{A}$ is a pair
$(\ob{\mathcal{A}},$ $d_\mathcal{A})$ where
\begin{itemize}
\item
$\ob{\mathcal{A}}$ is the set of \textbf{points} of
$\mathcal{A}$;
\item
$d_\mathcal{A}$, called the 
\textbf{distance function} of $\mathcal{A}$,
takes two points $a,b$ of 
$\mathcal{A}$ and returns a value 
$d_\mathcal{A}(a,b)\in \mathbb{K}\cup\{-\infty,\infty\}$;
\end{itemize}
such that the following axioms hold:
\begin{description}[font=\normalfont]
	\item[$\qquad$(Composition law)]
	$d_\mathcal{A}(a,b) + d_\mathcal{A} (b,c) \geq
	d_\mathcal{A} (a,c)\quad(\forall a,b,c\in \ob{\mathcal{A}})$;
	\item[$\qquad$(Identity law)]
	$0\geq d_\mathcal{A}(a,a)\quad(\forall a\in \ob{\mathcal{A}})$.
\end{description}
Note that for each $a\in \ob{\mathcal{A}}$ 
the composition law yields 
\begin{align}\label{comp_k}
	d_\mathcal{A}(a,a) + d_\mathcal{A} (a,a) \geq
	d_\mathcal{A} (a,a);
\end{align}
we can combine this with the identity law and deduce
\[
	d_\mathcal{A}(a,a)=0\text{ or }-\infty,
\]
because no
negative number $d_\mathcal{A}(a,a)\in\mathbb{K}$
satisfies (\ref{comp_k}).
\end{ex}

\begin{ex} 
A $\overline{\mathbb{K}}_+^\text{Cart}$-category
$\mathcal{A}$ is a pair
$(\ob{\mathcal{A}},$ $d_\mathcal{A})$ where
\begin{itemize}
\item
$\ob{\mathcal{A}}$ is the set of \textbf{points} of
$\mathcal{A}$;
\item
$d_\mathcal{A}$
takes two points $a,b$ of 
$\mathcal{A}$ and returns a value 
$d_\mathcal{A}(a,b)\in \mathbb{K}_+\cup\{\infty\}$;
\end{itemize}
such that the following axioms hold:
\begin{description}[font=\normalfont]
	\item[$\qquad$(Composition law)]
	$\max\{d_\mathcal{A}(a,b),d_\mathcal{A} (b,c)\} \geq
	d_\mathcal{A} (a,c)\quad(\forall a,b,c\in \ob{\mathcal{A}})$;
	\item[$\qquad$(Identity law)]
	$0\geq d_\mathcal{A}(a,a)\quad(\forall a\in \ob{\mathcal{A}})$.
\end{description}
Of course, we can rewrite the identity law as 
$d_\mathcal{A}(a,a) = 0$.
As observed in \cite{Law73} 
(when $\mathbb{K}=\mathbb{R}$),
$\overline{\mathbb{K}}_+^\text{Cart}$-categories
generalize (discrete or ordinary) ultrametric spaces.
\end{ex}

\section{Enriched Functors and Canonical Orderings}
One can understand the notion of $\mathcal{V}$-functors as
structure-preserving maps
between $\mathcal{V}$-categories.
In the general enriched category theory (where the enriching
category is not necessarily a poset), there is a yet  
higher notion of
$\mathcal{V}$\textit{-natural transformations},
which are regarded as \textit{maps between
$\mathcal{V}$-functors}.
In our theory, however, the notion of
$\mathcal{V}$-natural transformations are largely 
trivialized, and it only remains as a certain 
preorder relation among
$\mathcal{V}$-functors with specified domain and codomain
$\mathcal{V}$-categories.
\begin{defn} 
Let $\mathcal{V}=(\mathcal{V}_0,\otimes,e)$ be
an SM-P, and 
$\mathcal{A}=$ $(\ob{\mathcal{A}},\Hom_\mathcal{A})$ and
$\mathcal{B}=(\ob{\mathcal{B}},\Hom_\mathcal{B})$ be
$\mathcal{V}$-categories. 
A $\mathcal{V}$\textbf{-functor} 
\[
	F\colon \mathcal{A}\longrightarrow\mathcal{B}
\]
from $\mathcal{A}$ to $\mathcal{B}$ is a map 
\[
	\ob{F}\colon \ob{\mathcal{A}}\longrightarrow\ob{\mathcal{B}}
\]
between the sets of objects
with the following property:
\begin{description}[font=\normalfont]
	\item[$\ \ $(Increasing condition)]
	$\Hom_\mathcal{A}(a,a^\prime)\sqsubseteq
	\Hom_\mathcal{B}\big(\ob{F}(a),\ob{F}(a^\prime)\big)$ $(\forall a,a^\prime\in \ob{\mathcal{A}})$.
\end{description}
In case a stronger condition
\[
	\Hom_\mathcal{A}(a,a^\prime)=
	\Hom_\mathcal{B}\big(\ob{F}(a),\ob{F}(a^\prime)\big)\quad(\forall a,a^\prime\in \ob{\mathcal{A}})
\]
holds, $F$ is called a \textbf{fully faithful}
$\mathcal{V}$-functor.

$\mathcal{A}$ is called the \textbf{domain} of $F$ and
$\mathcal{B}$ the \textbf{codomain} of $F$. 
We denote the set of all $\mathcal{V}$-functors
from $\mathcal{A}$ to $\mathcal{B}$ by 
$[\mathcal{A},\mathcal{B}]_0$.
\end{defn}
For brevity, we often denote the values like $\ob{F}(a)$
by $F(a)$.
Clearly, $\mathcal{V}$-functors are closed under
composition:

\begin{prop} 
Let $F\colon \mathcal{A}\longrightarrow \mathcal{B}$ and
$G\colon \mathcal{B}\longrightarrow \mathcal{C}$ be
$\mathcal{V}$-functors. 
Then, the composite map
\[
	\ob{G}\circ \ob{F}\colon \ob{\mathcal{A}}\longrightarrow \ob{\mathcal{B}}\longrightarrow\ob{\mathcal{C}}
\]
defines a $\mathcal{V}$-functor 
$G\circ F\colon\mathcal{A}\longrightarrow\mathcal{C}$.
\end{prop}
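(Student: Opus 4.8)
The plan is to observe that the only thing requiring verification is the increasing condition for the composite, since $\ob{G}\circ\ob{F}$ is automatically a well-defined map of sets $\ob{\mathcal{A}}\longrightarrow\ob{\mathcal{C}}$. So I would set $G\circ F$ to be the $\mathcal{V}$-functor whose underlying map of objects is $\ob{G}\circ\ob{F}$, and then check that this underlying map satisfies
\[
	\Hom_\mathcal{A}(a,a^\prime)\sqsubseteq
	\Hom_\mathcal{C}\big(G(F(a)),G(F(a^\prime))\big)
	\quad(\forall a,a^\prime\in \ob{\mathcal{A}}).
\]

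The key step is simply to chain the two given increasing conditions using transitivity of the partial order $\sqsubseteq$ on $\mathcal{V}_0$. First I would apply the increasing condition of $F$ to the pair $a,a^\prime$, obtaining $\Hom_\mathcal{A}(a,a^\prime)\sqsubseteq\Hom_\mathcal{B}(F(a),F(a^\prime))$. Next I would apply the increasing condition of $G$ to the pair of objects $F(a),F(a^\prime)\in\ob{\mathcal{B}}$, obtaining $\Hom_\mathcal{B}(F(a),F(a^\prime))\sqsubseteq\Hom_\mathcal{C}(G(F(a)),G(F(a^\prime)))$. Combining these two inequalities via transitivity yields exactly the required condition.

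There is no genuine obstacle here: the entire content is a two-link transitive chain, and no use of the tensor product, unit element, or any lattice structure is needed beyond the fact that $\sqsubseteq$ is a partial (hence transitive) order. If anything, the only point worth a remark is the implicit substitution $b=F(a)$, $b^\prime=F(a^\prime)$ when instantiating $G$'s increasing condition, which is valid precisely because $F$ maps objects of $\mathcal{A}$ to objects of $\mathcal{B}$. Thus I would expect the proof to be a single displayed chain of inequalities with a one-line justification.
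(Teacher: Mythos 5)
Your proposal is correct and matches the paper's proof exactly: the paper also verifies the increasing condition by chaining $\Hom_{\mathcal{A}}(a,a^\prime)\sqsubseteq \Hom_{\mathcal{B}}\big(F(a),F(a^\prime)\big)\sqsubseteq \Hom_{\mathcal{C}}\big(G\circ F(a),G\circ F(a^\prime)\big)$ and invoking transitivity of $\sqsubseteq$. Nothing is missing.
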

\begin{proof}
For all $a,a^\prime\in \ob{\mathcal{A}}$, we have
\[
	\Hom_{\mathcal{A}}(a,a^\prime)
	\sqsubseteq \Hom_{\mathcal{B}}\big(F(a),F(a^\prime)\big)
	\sqsubseteq
	\Hom_{\mathcal{C}}\big(G\circ F(a),G\circ F(a^\prime)\big).
\]
\end{proof}

Also, for every $\mathcal{V}$-category $\mathcal{A}$,
the identity map on $\ob{\mathcal{A}}$ defines a 
$\mathcal{V}$-functor. 
It is called the \textbf{identity} 
$\mathcal{V}$\textbf{-functor} on 
$\mathcal{A}$ and denoted by $1_\mathcal{A}$.

\begin{defn}
Let $\mathcal{V}$ be
an SM-P, and $\mathcal{A}$ and $\mathcal{B}$ be 
$\mathcal{V}$-categories.
A $\mathcal{V}$-functor 
\[
F\colon \mathcal{A}\longrightarrow \mathcal{B}
\] 
from $\mathcal{A}$ to $\mathcal{B}$
is an \textbf{isomorphism}
if there exists a $\mathcal{V}$-functor 
(called its \textbf{inverse})
\[
G\colon \mathcal{B}\longrightarrow \mathcal{A}
\]
from $\mathcal{B}$ to $\mathcal{A}$ such that
$G\circ F=1_{\mathcal{A}}$ and 
$F\circ G=1_{\mathcal{B}}$
hold.

If there is an isomorphism between 
$\mathcal{A}$ and $\mathcal{B}$, then they are said to be 
\textbf{isomorphic} and written as 
$\mathcal{A}\cong\mathcal{B}$.
\end{defn} 

\begin{prop}\label{iso_ffb}
A $\mathcal{V}$-functor 
\[
F\colon \mathcal{A}\longrightarrow \mathcal{B}
\] 
is an isomorphism if and only if it is fully faithful,
and the map
\[
\ob{F}\colon \ob{\mathcal{A}}\longrightarrow \ob{\mathcal{B}}
\]
between the sets of objects is a bijection. 
\end{prop}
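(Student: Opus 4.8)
The plan is to prove both implications directly, exploiting the fact that $\sqsubseteq$ is a partial order—in particular antisymmetric—which is precisely what lets us upgrade a pair of opposite inequalities into an equality.

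First I would treat the ``only if'' direction. Suppose $F$ is an isomorphism with inverse $G\colon\mathcal{B}\longrightarrow\mathcal{A}$. On the level of object-sets, the equations $G\circ F=1_\mathcal{A}$ and $F\circ G=1_\mathcal{B}$ say exactly that $\ob{G}\circ\ob{F}$ and $\ob{F}\circ\ob{G}$ are the respective identity maps, so $\ob{F}$ is a bijection with inverse $\ob{G}$. For full faithfulness I would chain the two increasing conditions: the increasing condition for $F$ gives $\Hom_\mathcal{A}(a,a')\sqsubseteq\Hom_\mathcal{B}(F(a),F(a'))$, while that for $G$, instantiated at the objects $F(a),F(a')$ and simplified via $G\circ F=1_\mathcal{A}$, gives $\Hom_\mathcal{B}(F(a),F(a'))\sqsubseteq\Hom_\mathcal{A}(a,a')$. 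Antisymmetry of $\sqsubseteq$ then forces these two hom-objects to coincide, which is exactly full faithfulness.

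Next I would handle the ``if'' direction, which is the more substantial one: given that $F$ is fully faithful and $\ob{F}$ is a bijection, I must manufacture an inverse $\mathcal{V}$-functor. The only candidate on objects is $\ob{G}:=(\ob{F})^{-1}$, and the real content is verifying that this map satisfies the increasing condition, i.e.\ that $G$ is a $\mathcal{V}$-functor at all. Writing an arbitrary pair of objects of $\mathcal{B}$ as $F(a),F(a')$ (possible since $\ob{F}$ is surjective), the required inequality $\Hom_\mathcal{B}(F(a),F(a'))\sqsubseteq\Hom_\mathcal{A}\big(G(F(a)),G(F(a'))\big)=\Hom_\mathcal{A}(a,a')$ is precisely the reverse of the increasing condition for $F$, and it holds because full faithfulness makes the two sides equal. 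Once $G$ is known to be a $\mathcal{V}$-functor, the identities $G\circ F=1_\mathcal{A}$ and $F\circ G=1_\mathcal{B}$ hold automatically, since $\ob{G}$ is by construction the set-theoretic two-sided inverse of $\ob{F}$.

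The only place demanding care is this ``if'' direction: one must not merely invert $F$ as a function on objects, but also confirm that the inverse respects hom-objects, and this is exactly where full faithfulness—rather than the one-sided increasing condition—is indispensable. The poset assumption does the remaining work for free, since it removes any obligation to check compatibility of $G$ with composition or identities beyond the increasing condition.
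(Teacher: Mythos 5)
Your proof is correct and follows essentially the same route as the paper: the ``only if'' direction uses the sandwich $\Hom_\mathcal{A}(a,a')\sqsubseteq\Hom_\mathcal{B}(F(a),F(a'))\sqsubseteq\Hom_\mathcal{A}(a,a')$ collapsed by antisymmetry, and the ``if'' direction inverts $\ob{F}$ set-theoretically and verifies the increasing condition for the inverse via full faithfulness. The only cosmetic difference is that the paper phrases the second direction by writing $b=F(G(b))$ rather than invoking surjectivity, which amounts to the same computation.
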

\begin{proof}
If $F$ is an isomorphism, let $G$ be its inverse.
Then for each pair $a,a^\prime\in \ob{\mathcal{A}}$,
\begin{align*}
\Hom_\mathcal{A}(a,a^\prime)
&\sqsubseteq \Hom_\mathcal{B}\big(F(a),F(a^\prime)\big)\\
&\sqsubseteq \Hom_\mathcal{A}\big(G\circ F(a),G\circ F(a^\prime)\big)\\
&= \Hom_\mathcal{A}(a,a^\prime)
\end{align*}
holds and $F$ is fully faithful.
Also, $\ob{G}$ witnesses that 
$\ob{F}$ is a bijection as a map.

Suppose then, that $F$ is fully faithful and 
$\ob{F}$ is a bijection.
Let 
$\ob{G}\colon \ob{\mathcal{B}}\longrightarrow\ob{\mathcal{A}}$ 
be the inverse of $\ob{F}$ (as a map).
Then, for each pair $b,b^\prime\in \ob{\mathcal{B}}$, 
\[
\Hom_\mathcal{B}(b,b^\prime)
= \Hom_\mathcal{B}\big(F\circ G(b),F\circ G(b^\prime)\big)
= \Hom_\mathcal{A}\big(G(b),G(b^\prime)\big)
\]
holds, thus $G$ is also a (fully faithful) 
$\mathcal{V}$-functor.
By construction, it is clear that $G\circ F=1_\mathcal{A}$
and $F\circ G=1_\mathcal{B}$ hold.
\end{proof}

\begin{defn}\label{def:canord} 
Let $\mathcal{V}=(\mathcal{V}_0,\otimes,e)$ be
an SM-P, and 
$\mathcal{A}=$ $(\ob{\mathcal{A}},\Hom_\mathcal{A})$ and
$\mathcal{B}=(\ob{\mathcal{B}},\Hom_\mathcal{B})$ be
$\mathcal{V}$-categories. 
The set $[\mathcal{A},\mathcal{B}]_0$ of all $\mathcal{V}$-functors from $\mathcal{A}$
to $\mathcal{B}$ admits a canonical relation
$\Rightarrow$ defined as follows:
\[
	F\Rightarrow G \iff e\sqsubseteq \Hom_\mathcal{B}
	\big(F(a),G(a)\big)\quad(\forall a\in \ob{\mathcal{A}});
\]
where 
$F,G\colon \mathcal{A}\longrightarrow \mathcal{B}$
are $\mathcal{V}$-functors.
We call $\Rightarrow$ the \textbf{canonical ordering}
on $[\mathcal{A},\mathcal{B}]_0$.
\end{defn}

\begin{prop}
The relation $\Rightarrow$ defined on the set
$[\mathcal{A},\mathcal{B}]_0$ is a preorder relation.
\end{prop}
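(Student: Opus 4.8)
The plan is to verify directly that the relation $\Rightarrow$ on $[\mathcal{A},\mathcal{B}]_0$ satisfies the two defining properties of a preorder: reflexivity and transitivity. Both will reduce to the identity and composition laws of the $\mathcal{V}$-category $\mathcal{B}$, together with the monotonicity of $\otimes$.

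For reflexivity, I must show $F\Rightarrow F$ for every $\mathcal{V}$-functor $F\colon\mathcal{A}\longrightarrow\mathcal{B}$. Unwinding the definition, this amounts to checking $e\sqsubseteq\Hom_\mathcal{B}\big(F(a),F(a)\big)$ for all $a\in\ob{\mathcal{A}}$. But this is precisely an instance of the identity law for $\mathcal{B}$ applied to the object $F(a)$, so reflexivity is immediate.

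The transitivity step is where the monoidal structure comes into play, though it remains routine. Suppose $F\Rightarrow G$ and $G\Rightarrow H$ for $\mathcal{V}$-functors $F,G,H\colon\mathcal{A}\longrightarrow\mathcal{B}$; I must derive $F\Rightarrow H$, i.e. $e\sqsubseteq\Hom_\mathcal{B}\big(F(a),H(a)\big)$ for each $a$. The idea is to start from $e=e\otimes e$ (using the unit law for $\otimes$), then apply monotonicity of $\otimes$ to the two hypotheses $e\sqsubseteq\Hom_\mathcal{B}\big(F(a),G(a)\big)$ and $e\sqsubseteq\Hom_\mathcal{B}\big(G(a),H(a)\big)$, and finally invoke the composition law of $\mathcal{B}$. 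Concretely, for each $a\in\ob{\mathcal{A}}$,
\[
	e=e\otimes e\sqsubseteq\Hom_\mathcal{B}\big(F(a),G(a)\big)\otimes\Hom_\mathcal{B}\big(G(a),H(a)\big)\sqsubseteq\Hom_\mathcal{B}\big(F(a),H(a)\big),
\]
where the first inequality uses monotonicity of $\otimes$ with the two hypotheses and the second is the composition law for $\mathcal{B}$ with $b=G(a)$.

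I do not anticipate any genuine obstacle here; the statement is elementary once the definitions are unwound, and the only mild subtlety is remembering to factor $e$ as $e\otimes e$ so that monotonicity can be applied componentwise. It is worth noting that antisymmetry is deliberately not claimed—$\Rightarrow$ is only asserted to be a preorder, not a partial order—which is consistent with the fact that two distinct $\mathcal{V}$-functors $F,G$ may satisfy both $F\Rightarrow G$ and $G\Rightarrow F$ (for instance in the Lawvere metric space setting, where this corresponds to points at distance zero in both directions without being equal).
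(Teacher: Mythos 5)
Your proof is correct and follows exactly the paper's own argument: reflexivity via the identity law for $\mathcal{B}$, and transitivity by factoring $e=e\otimes e$, applying monotonicity of $\otimes$ to the two hypotheses, and closing with the composition law for $\mathcal{B}$. No gaps.
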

\begin{proof}
\begin{description}[font=\normalfont]
\item[[Reflexivity\!\!\!]]
We wish to prove
$e\sqsubseteq \Hom_\mathcal{B}\big(F(a),F(a)\big)$,
but this is an instance of the identity law for 
$\mathcal{B}$.
\item[[Transitivity\!\!\!]]
Suppose $F\Rightarrow G$ and $G\Rightarrow H$ for
$F,G,H\colon \mathcal{A}\longrightarrow \mathcal{B}$.
Then an application of the composition law for $\mathcal{B}$
yields
\begin{align*}
	e&=e\otimes e\\
	&\sqsubseteq \Hom_\mathcal{B}\big(F(a),G(a)\big)\otimes
	\Hom_\mathcal{B}\big(G(a),H(a)\big)\\
	&\sqsubseteq \Hom_\mathcal{B}\big(F(a),H(a)\big).
\end{align*}
\end{description}
\end{proof}
Also, the relation $\Rightarrow$ is preserved under 
composition of $\mathcal{V}$-functors, in the sense of
the following proposition:
\begin{prop}
Let $F,G\colon\mathcal{A}\longrightarrow \mathcal{B}$ and
$H,K\colon\mathcal{B}\longrightarrow\mathcal{C}$ be
$\mathcal{V}$-functors such that $F\Rightarrow G$ and 
$H\Rightarrow K$ hold.
Then, for the composite $\mathcal{V}$-functors
$H\circ F, K\circ G\colon\mathcal{A}\longrightarrow\mathcal{C}$,
$H\circ F\Rightarrow K\circ G$ holds. 
\end{prop}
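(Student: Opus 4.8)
The plan is to unfold the definition of the canonical ordering and reduce the claim to a pointwise inequality in $\mathcal{C}$, then establish that inequality by the same device used in the transitivity proof above, namely inserting an intermediate object and appealing to the composition law. Fix an arbitrary $a \in \ob{\mathcal{A}}$; by Definition~\ref{def:canord} it suffices to show $e \sqsubseteq \Hom_{\mathcal{C}}\big(H\circ F(a), K\circ G(a)\big)$. I would route from $H(F(a))$ to $K(G(a))$ through the intermediate object $H(G(a))$.

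First I would feed the pair $F(a), G(a)$ to the increasing condition for the $\mathcal{V}$-functor $H$, obtaining $\Hom_{\mathcal{B}}\big(F(a),G(a)\big) \sqsubseteq \Hom_{\mathcal{C}}\big(H(F(a)),H(G(a))\big)$; combined with the hypothesis $F \Rightarrow G$, which supplies $e \sqsubseteq \Hom_{\mathcal{B}}\big(F(a),G(a)\big)$, this yields $e \sqsubseteq \Hom_{\mathcal{C}}\big(H(F(a)),H(G(a))\big)$. Next I would instantiate the hypothesis $H \Rightarrow K$ at the object $G(a) \in \ob{\mathcal{B}}$ to get $e \sqsubseteq \Hom_{\mathcal{C}}\big(H(G(a)),K(G(a))\big)$.

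Finally I would splice these two inequalities together using the unit law $e = e \otimes e$, the monotonicity of $\otimes$, and the composition law for $\mathcal{C}$:
\[
e = e \otimes e \sqsubseteq \Hom_{\mathcal{C}}\big(H(F(a)),H(G(a))\big) \otimes \Hom_{\mathcal{C}}\big(H(G(a)),K(G(a))\big) \sqsubseteq \Hom_{\mathcal{C}}\big(H(F(a)),K(G(a))\big),
\]
which is exactly what was required. I do not anticipate any genuine obstacle here; the only point worth noting is that the two hypotheses enter asymmetrically --- $H \Rightarrow K$ is used directly at a single object, while $F \Rightarrow G$ is transported across $\mathcal{B}$ and $\mathcal{C}$ via functoriality of $H$ --- and it is precisely the choice of the intermediate object $H(G(a))$ (as opposed to $K(F(a))$, which would instead route $F \Rightarrow G$ through $K$) that keeps the bookkeeping cleanest.
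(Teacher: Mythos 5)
Your proof is correct and takes essentially the same approach as the paper: fix $a\in\ob{\mathcal{A}}$, produce two hom-objects in $\mathcal{C}$ each bounding $e$ from above, and splice them with $e=e\otimes e$, monotonicity of $\otimes$, and the composition law for $\mathcal{C}$. The only (immaterial) difference is the mirror choice of intermediate object: you route through $H(G(a))$, transporting $F\Rightarrow G$ via the increasing condition for $H$ and using $H\Rightarrow K$ at $G(a)$, whereas the paper routes through $K(F(a))$, using $H\Rightarrow K$ at $F(a)$ and transporting $F\Rightarrow G$ via $K$.
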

\begin{proof}
\begin{align*}
e&= e\otimes e\\
&\sqsubseteq \Hom_{\mathcal{B}}\big(F(a),F(a)\big)\otimes \Hom_{\mathcal{B}}\big(F(a),G(a)\big)\\
&\sqsubseteq \Hom_{\mathcal{C}}\big(H\circ F(a),K\circ F(a)\big)\otimes \Hom_{\mathcal{C}}\big(K\circ F(a),K\circ G(a)\big)\\
&\sqsubseteq \Hom_{\mathcal{C}}\big(H\circ F(a),K\circ G(a)\big).
\end{align*}
\end{proof}

Below we see examples of $\mathcal{V}$-functors
and relation $\Rightarrow$.

\begin{ex} 
Let us take $\mathcal{V}=\textbf{2}$.
We claim that \textbf{2}-functors between 
\textbf{2}-categories are just 
\textit{monotonic maps}.
Indeed, for \textbf{2}-categories
$\mathcal{A}=(\ob{\mathcal{A}},\preceq_\mathcal{A})$ and
$\mathcal{B}=(\ob{\mathcal{B}},\preceq_\mathcal{B})$,
a \textbf{2}-functor 
$F\colon \mathcal{A}\longrightarrow \mathcal{B}$ is a map
between the sets of objects such that 
\begin{description}[font=\normalfont]
	\item[$\qquad$(Increasing condition)]
	$(a\preceq_\mathcal{A}a^\prime)\vdash
	\big(F(a)\preceq_\mathcal{B}F(a^\prime)\big)\quad(\forall a,a^\prime\in \ob{\mathcal{A}})$
\end{description}
holds.
If $F$ is moreover fully faithful, 
\[
	(a\preceq_\mathcal{A}a^\prime)=
	\big(F(a)\preceq_\mathcal{B}F(a^\prime)\big)\quad(\forall a,a^\prime\in \ob{\mathcal{A}})
\]
holds and $F$ is nothing but a usual embedding between preorders.
The notion of isomorphic \textbf{2}-categories 
agrees with that of isomorphic preorders.

The relation $\Rightarrow$ turns out to be the usual 
ordering on the set of monotonic maps:
\[
F\Rightarrow G 
\iff \true \vdash \big(F(a)\preceq_\mathcal{B}G(a)\big)
\quad(\forall a\in \ob{\mathcal{A}}).
\]
\end{ex}

\begin{ex} 
For $\overline{\mathbb{K}}_+$-categories, one can think of
$\overline{\mathbb{K}}_+$-functors as
\textit{nonexpansive maps}.
Let
$\mathcal{A}=(\ob{\mathcal{A}},d_\mathcal{A})$ and
$\mathcal{B}=(\ob{\mathcal{B}},d_\mathcal{B})$ be 
$\overline{\mathbb{K}}_+$-categories.
A $\overline{\mathbb{K}}_+$-functor 
$F\colon \mathcal{A}\longrightarrow \mathcal{B}$ is a map
between the sets of points with the following property: 
\begin{description}[font=\normalfont]
	\item[$\qquad$(Increasing condition)]
	$d_\mathcal{A}(a,a^\prime)\geq
	d_\mathcal{B}\big(F(a),F(a^\prime)\big)\quad(\forall a,a^\prime\in \ob{\mathcal{A}})$.
\end{description}
If $F$ is fully faithful, 
\[
	d_\mathcal{A}(a,a^\prime)=
	d_\mathcal{B}\big(F(a),F(a^\prime)\big)\quad(\forall a,a^\prime\in \ob{\mathcal{A}})
\]
holds; i.e., in the terminology of metric spaces, 
$F$ is an isometry.
The notion of isomorphic 
$\overline{\mathbb{K}}_+$-categories 
specializes to that of isometric metric spaces.

The relation $\Rightarrow$ is defined as follows:
\begin{align*}
F\Rightarrow G 
&\iff 0 \geq d_\mathcal{B}\big(F(a),G(a)\big)
\quad(\forall a\in \ob{\mathcal{A}})\\
&\iff d_\mathcal{B}\big(F(a),G(a)\big)=0
\quad(\forall a\in \ob{\mathcal{A}}).
\end{align*}
\end{ex}

\begin{ex} 
Let
$\mathcal{A}=(\ob{\mathcal{A}},d_\mathcal{A})$ and
$\mathcal{B}=(\ob{\mathcal{B}},d_\mathcal{B})$ be 
$\overline{\mathbb{K}}$-categories.
A $\overline{\mathbb{K}}$-functor 
$F\colon \mathcal{A}\longrightarrow \mathcal{B}$ is a map
between the sets of points with the following property: 
\begin{description}[font=\normalfont]
	\item[$\qquad$(Increasing condition)]
	$d_\mathcal{A}(a,a^\prime)\geq
	d_\mathcal{B}\big(F(a),F(a^\prime)\big)\quad(\forall a,a^\prime\in \ob{\mathcal{A}})$.
\end{description}
A fully faithful $\overline{\mathbb{K}}$-functor 
is an isometry (in an obvious sense).

The relation $\Rightarrow$ is defined as follows:
\[
F\Rightarrow G 
\iff 0 \geq d_\mathcal{B}\big(F(a),G(a)\big)
\quad(\forall a\in \ob{\mathcal{A}}).
\]
\end{ex}

\begin{ex} 
Let
$\mathcal{A}=(\ob{\mathcal{A}},d_\mathcal{A})$ and
$\mathcal{B}=(\ob{\mathcal{B}},d_\mathcal{B})$ be 
$\overline{\mathbb{K}}_+^\text{Cart}$-categories.
A $\overline{\mathbb{K}}_+^\text{Cart}$-functor 
$F\colon \mathcal{A}\longrightarrow \mathcal{B}$ is a map
between the sets of points with the following property: 
\begin{description}[font=\normalfont]
	\item[$\qquad$(Increasing condition)]
	$d_\mathcal{A}(a,a^\prime)\geq
	d_\mathcal{B}\big(F(a),F(a^\prime)\big)\quad(\forall a,a^\prime\in \ob{\mathcal{A}})$.
\end{description}
A fully faithful
$\overline{\mathbb{K}}_+^\text{Cart}$-functor is an isometry.

The relation $\Rightarrow$ is defined as follows:
\begin{align*}
F\Rightarrow G 
&\iff 0 \geq d_\mathcal{B}\big(F(a),G(a)\big)
\quad(\forall a\in \ob{\mathcal{A}})\\
&\iff d_\mathcal{B}\big(F(a),G(a)\big)=0
\quad(\forall a\in \ob{\mathcal{A}}).
\end{align*}
\end{ex}

\section{The Opposites of $\mathcal{V}$-Categories}
In this section we present a way to construct new 
$\mathcal{V}$-categories out of some already existing
$\mathcal{V}$-categories. 

\begin{prop} 
Let $\mathcal{V}=(\mathcal{V}_0,\otimes,e)$ 
be an SM-P, and 
$\mathcal{A}=(\ob{\mathcal{A}},$ $\Hom_\mathcal{A})$
be a $\mathcal{V}$-category.
Then there is a $\mathcal{V}$-category denoted by 
$\mathcal{A}^\op=(\ob{\mathcal{A}^\op},\Hom_{\mathcal{A}^\op})$,
where
\begin{itemize}
\item
$\ob{\mathcal{A}^\op} =\ob{\mathcal{A}}$, i.e.,
the set of objects for $\mathcal{A}^\op$
is identical with that for 
$\mathcal{A}$;
\item
the hom-object function
$\Hom_{\mathcal{A}^\op}$
is given as
\[
	\Hom_{\mathcal{A}^\op}(a,b)	= \Hom_\mathcal{A}(b,a).
\]
\end{itemize} 
\end{prop}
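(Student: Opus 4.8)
The plan is to verify that the proposed data $\mathcal{A}^\op$ satisfies the two axioms in the definition of a $\mathcal{V}$-category, namely the identity law and the composition law, reducing each to the corresponding axiom already known to hold for $\mathcal{A}$. Since $\ob{\mathcal{A}^\op}=\ob{\mathcal{A}}$ and the hom-objects are merely the transposed hom-objects of $\mathcal{A}$, both axioms should follow formally, with the commutativity of $\otimes$ doing the essential work in the composition law.

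First I would dispose of the identity law, which is immediate. For each $a\in\ob{\mathcal{A}^\op}=\ob{\mathcal{A}}$, the definition gives $\Hom_{\mathcal{A}^\op}(a,a)=\Hom_\mathcal{A}(a,a)$, so the inequality $e\sqsubseteq\Hom_{\mathcal{A}^\op}(a,a)$ is literally the identity law for $\mathcal{A}$ at the object $a$; nothing further is needed.

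Next I would treat the composition law, which is where the only real content lies. Fix $a,b,c\in\ob{\mathcal{A}^\op}$. Unfolding the definition of $\Hom_{\mathcal{A}^\op}$ gives
\[
	\Hom_{\mathcal{A}^\op}(a,b)\otimes\Hom_{\mathcal{A}^\op}(b,c)
	=\Hom_\mathcal{A}(b,a)\otimes\Hom_\mathcal{A}(c,b).
\]
I would then invoke the commutative law for $\otimes$ (part of the SM-P structure of $\mathcal{V}$) to rewrite the right-hand side as $\Hom_\mathcal{A}(c,b)\otimes\Hom_\mathcal{A}(b,a)$, and apply the composition law for $\mathcal{A}$ to the triple $(c,b,a)$, obtaining $\Hom_\mathcal{A}(c,b)\otimes\Hom_\mathcal{A}(b,a)\sqsubseteq\Hom_\mathcal{A}(c,a)$. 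Finally, $\Hom_\mathcal{A}(c,a)=\Hom_{\mathcal{A}^\op}(a,c)$ by definition, which chains together to yield $\Hom_{\mathcal{A}^\op}(a,b)\otimes\Hom_{\mathcal{A}^\op}(b,c)\sqsubseteq\Hom_{\mathcal{A}^\op}(a,c)$, as required.

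The whole argument is short, and there is no genuine obstacle; the single point deserving emphasis is that transposing every hom-object also swaps the order of the two tensor factors in the composition law, so the proof depends crucially on the commutativity of $\otimes$. For a general (non-commutative) monoidal $\mathcal{V}$ one would instead have to build $\mathcal{A}^\op$ using the opposite monoidal structure $\mathcal{V}^{\mathrm{rev}}$; here the SM-P assumption makes that detour unnecessary, and the same underlying order $\sqsubseteq$ serves for $\mathcal{A}^\op$.
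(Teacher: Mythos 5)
Your proof is correct and follows essentially the same route as the paper: the identity law is immediate since $\Hom_{\mathcal{A}^\op}(a,a)=\Hom_\mathcal{A}(a,a)$, and the composition law is verified by unfolding definitions, invoking the commutative law for $\otimes$, and applying the composition law of $\mathcal{A}$ to the reversed triple. Your closing remark that commutativity of $\otimes$ is the crucial ingredient (and that a non-symmetric $\mathcal{V}$ would require the reversed monoidal structure) is accurate and a nice observation, though not needed here.
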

\begin{proof}
\begin{description}[font=\normalfont]
\item[[Composition law\!\!\!]]
The claim is 
\[
\Hom_{\mathcal{A}^\op}(a,b)\otimes 
\Hom_{\mathcal{A}^\op}(b,c)\sqsubseteq 
\Hom_{\mathcal{A}^\op}(a,c).
\]
It is proved as follows: 
\begin{align*}
\Hom_{\mathcal{A}^\op}(a,b)\otimes \Hom_{\mathcal{A}^\op}(b,c)
&=\Hom_{\mathcal{A}}(b,a)\otimes \Hom_{\mathcal{A}}(c,b)\\
&=\Hom_{\mathcal{A}}(c,b)\otimes \Hom_{\mathcal{A}}(b,a)\\
&\sqsubseteq \Hom_{\mathcal{A}}(c,a)\\
&=\Hom_{\mathcal{A}^\op}(a,c).
\end{align*}
\item[[Identity law\!\!\!]]
The claim is 
$e \sqsubseteq\Hom_{\mathcal{A}^\op}(a,a)$,
an obvious formula.
\end{description}
\end{proof}

\begin{defn}
The category $\mathcal{A}^\op$ is called 
the \textbf{opposite category} of 
$\mathcal{A}$.
\end{defn}

\begin{ex} 
Let $\mathcal{A}=(\ob{\mathcal{A}},\preceq_\mathcal{A})$ 
and $\mathcal{B}=(\ob{\mathcal{B}},\preceq_\mathcal{B})$
be $\textbf{2}$-categories. 
The opposite category
$\mathcal{A}^\op=(\ob{\mathcal{A}^\op},\preceq_{\mathcal{A}^\op})$
is the opposite (or dual) of preorder:
\begin{itemize}
	\item $\ob{\mathcal{A}^\op}=\ob{\mathcal{A}}$;
	\item $(a\preceq_{\mathcal{A}^\op} b)=
	(b\preceq_{\mathcal{A}}a)$.
\end{itemize}
\end{ex}

\begin{ex} 
Let $\mathcal{A}=(\ob{\mathcal{A}},d_\mathcal{A})$ 
and $\mathcal{B}=(\ob{\mathcal{B}},d_\mathcal{B})$
be $\overline{\mathbb{K}}_+$-categories. 
The opposite category
$\mathcal{A^\op}=(\ob{\mathcal{A^\op}},d_\mathcal{A^\op})$
is defined as follows:
\begin{itemize}
\item
$\ob{\mathcal{A}^\op}=\ob{\mathcal{A}}$;
\item
$d_{\mathcal{A}^\op}(a,b)=d_{\mathcal{A}}(b,a)$.
\end{itemize}
Note that since a classical metric space has
a symmetric distance function, it is isometric
to its opposite.
\end{ex}

\begin{ex} 
Let $\mathcal{A}=(\ob{\mathcal{A}},d_\mathcal{A})$ 
and $\mathcal{B}=(\ob{\mathcal{B}},d_\mathcal{B})$
be $\overline{\mathbb{K}}$-categories. 
The opposite category
$\mathcal{A^\op}=(\ob{\mathcal{A^\op}},d_\mathcal{A^\op})$
is defined as follows:
\begin{itemize}
\item
$\ob{\mathcal{A}^\op}=\ob{\mathcal{A}}$;
\item
$d_{\mathcal{A}^\op}(a,b)=d_{\mathcal{A}}(b,a)$.
\end{itemize}
\end{ex}

\begin{ex} 
Let $\mathcal{A}=(\ob{\mathcal{A}},d_\mathcal{A})$ 
and $\mathcal{B}=(\ob{\mathcal{B}},d_\mathcal{B})$
be $\overline{\mathbb{K}}_+^\text{Cart}$-categories. 
The opposite category
$\mathcal{A^\op}=(\ob{\mathcal{A^\op}},d_\mathcal{A^\op})$
is defined as follows:
\begin{itemize}
\item
$\ob{\mathcal{A}^\op}=\ob{\mathcal{A}}$;
\item
$d_{\mathcal{A}^\op}(a,b)=d_{\mathcal{A}}(b,a)$.
\end{itemize}
\end{ex}

\section{$\mathcal{V}$ as a $\mathcal{V}$-Category}
We henceforth assume that the enriching category 
$\mathcal{V}$ is an \textit{SMC-P}.
This assumption enables us to endow
(the underlying set of) $\mathcal{V}_0$ with
a canonical $\mathcal{V}$-category structure:
\begin{prop}
Let $\mathcal{V}=(\mathcal{V}_0,\otimes,[-,-],e)$ 
be an SMC-P.
Then there is a $\mathcal{V}$-category, 
also denoted by 
$\mathcal{V}=(\mathcal{V}_0,[-,-])$, where
\begin{itemize}
\item
$\ob{\mathcal{V}}$ is $\mathcal{V}_0$, the underlying
set of the poset $\mathcal{V}_0$;
\item
the hom-object function is the internal-hom operation $[-,-]$.
\end{itemize} 
\end{prop}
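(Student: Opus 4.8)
The plan is to verify the two axioms in the definition of a $\mathcal{V}$-category for the proposed pair $(\mathcal{V}_0,[-,-])$: the identity law $e\sqsubseteq[a,a]$ and the composition law $[a,b]\otimes[b,c]\sqsubseteq[a,c]$, for all $a,b,c\in\mathcal{V}_0$. The data are already well-typed, since $\ob{\mathcal{V}}=\mathcal{V}_0$ is a set and $[-,-]$ returns elements of $\mathcal{V}_0$; so only the two inequalities remain. Both will follow by transposing across the adjointness relation $x\otimes y\sqsubseteq z\iff x\sqsubseteq[y,z]$ and then invoking an axiom already assumed for the SMC-P.

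For the identity law, I would read $e\sqsubseteq[a,a]$ through the adjointness relation, which turns it into the equivalent inequality $e\otimes a\sqsubseteq a$. By the unit law for $\otimes$ we have $e\otimes a=a$, so this holds trivially. Alternatively, one can stay inside the closed structure: by the commutative law for $[-,-]$ we have $e\sqsubseteq[a,a]\iff a\sqsubseteq[e,a]$, and the unit law for $[-,-]$ gives $[e,a]=a$, so the right-hand side is immediate.

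For the composition law, the key observation is that it is precisely the adjoint transpose of the composition law already assumed for the SC-P. Using the commutative law for $\otimes$ I would first rewrite $[a,b]\otimes[b,c]=[b,c]\otimes[a,b]$, and then apply the adjointness relation to see that
\[
[b,c]\otimes[a,b]\sqsubseteq[a,c]\iff[b,c]\sqsubseteq\big[[a,b],[a,c]\big].
\]
The right-hand inequality is exactly an instance of the composition law $[y,z]\sqsubseteq[[x,y],[x,z]]$ for $\mathcal{V}$, taking $x=a$, $y=b$, $z=c$. This simultaneously makes good on the earlier remark: the composition law for $\mathcal{V}$-categories, specialized to $\mathcal{V}$ regarded as a $\mathcal{V}$-category, \emph{is} the composition law for $\mathcal{V}$.

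I do not expect any genuine obstacle; the content is entirely bookkeeping. The one point demanding care is the variance and placement of the two arguments of $[-,-]$ (its first argument is contravariant), so that when I transpose across adjointness I match $[y,z]$ against the correct pair and do not accidentally swap the roles of $a$ and $c$. Should the direct transposition feel opaque, a fallback is to first derive the evaluation inequality $[y,z]\otimes y\sqsubseteq z$ (itself the transpose of $[y,z]\sqsubseteq[y,z]$) and then compute $([a,b]\otimes[b,c])\otimes a=[b,c]\otimes([a,b]\otimes a)\sqsubseteq[b,c]\otimes b\sqsubseteq c$ using associativity, commutativity, monotonicity, and two evaluations, before transposing once more to recover $[a,b]\otimes[b,c]\sqsubseteq[a,c]$.
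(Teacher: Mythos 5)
Your proposal is correct and follows essentially the same route as the paper: the composition law is obtained by commuting the tensor factors and transposing across the adjointness relation to land exactly on the SC-P composition law $[y,z]\sqsubseteq[[x,y],[x,z]]$, and the identity law reduces to the unit law (your alternative via $e\sqsubseteq[a,a]\iff a\sqsubseteq[e,a]=a$ is literally the paper's argument). The variance bookkeeping you flag is handled correctly, so there is nothing to repair.
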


\begin{proof}
\begin{description}[font=\normalfont]
\item[[Composition law\!\!\!]]
The claim is $[x,y]\otimes[y,z]\sqsubseteq [x,z]$ for 
all $x,y,z\in \mathcal{V}_0$, but
this is equivalent to the composition law for SC-Ps, 
which we know to hold:
\begin{prooftree}
	\def\fCenter{\ \sqsubseteq\ }
	\alwaysDoubleLine
	\Axiom$[x,y]\otimes[y,z]\fCenter [x,z]$
	\UnaryInf$[y,z]\otimes[x,y]\fCenter [x,z]$
	\UnaryInf$[y,z]\fCenter [[x,y],[x,z]]$
\end{prooftree}
Incidentally, this proof tree explains the duplication 
of the name ``composition law.''
\item[[Identity law\!\!\!]]
The claim is $e\sqsubseteq [z,z]$ for 
all $z\in \mathcal{V}_0$.
Using the commutative law for $[-,-]$, it suffices to 
show $z\sqsubseteq [e,z]$.
But the unit law for $[-,-]$ states $z=[e,z]$, so
the proof is done.
\end{description}
\end{proof}

Now the name of the operation ``internal-hom'' is motivated.
In a sense, it acts like $\Hom$, and takes values inside
$\mathcal{V}$. 
As an aside, one can think of the (external)
hom of the poset $\mathcal{V}_0$
to be that takes two elements $x,y$ of $\mathcal{V}_0$ and 
returns the truth value of $x\sqsubseteq y$, which is a
value in $\textbf{2}$; thus 
(unless $\mathcal{V}=\textbf{2}$)
outside $\mathcal{V}$.

Let us take a look at some examples of 
such canonical $\mathcal{V}$-categories.

\begin{ex} 
$\textbf{2}_0=\{\true,\false\}$ becomes a
\textbf{2}-category 
(i.e., preordered set) in a canonical way.
By the way, $\textbf{2}_0$ is defined to be the underlying
poset of \textbf{2}, hence is already a preordered set.
In fact, the \textbf{2}\textit{-category} \textbf{2} is
isomorphic to the 
\textit{underlying poset} $\textbf{2}_0$ of
the SMC-CL \textbf{2}:
one can see this by observing that the tables for $\vdash$
and for $\supset$ in  
Example \ref{ex:2} carry the same pattern.
\end{ex}

\begin{ex} 
The set
$\mathbb{K}_+\cup\{\infty\}$
becomes a $\overline{\mathbb{K}}_+$-category 
by the following distance function:
\[
	d_{\overline{\mathbb{K}}_+}(x,y)=y\dotminus x.
\]
Therefore, if $x\leq y$ the distance to climb
from $x$ to $y$ is the same as their the difference, 
but if $x\geq y$, we descend and the corresponding 
distance is $0$.

Note that the $\overline{\mathbb{K}}_+$-category 
$\overline{\mathbb{K}}_+$ is \textit{not} 
an ordinary (discrete) metric space;
among others, its distance function is highly asymmetric.
\end{ex}

\begin{ex} 
The set
$\mathbb{K}\cup\{-\infty,\infty\}$
becomes a $\overline{\mathbb{K}}$-category 
by the following distance function:
\[
	d_{\overline{\mathbb{K}}}(x,y)=y-x.
\]
Recall that in any $\overline{\mathbb{K}}$-category 
$\mathcal{A}$
\[
	d_{\mathcal{A}}(a,a)=0\text{ or }-\infty
\]
holds.
$\overline{\mathbb{K}}$ as a
$\overline{\mathbb{K}}$-category 
indicates that the distances between the same points may 
take both $0$ and $-\infty$ in one
$\overline{\mathbb{K}}$-category; in fact, if $s\in \mathbb{K}$,
\begin{align*}
d_{\overline{\mathbb{K}}}(s,s)&= 0\text{, and}\\
d_{\overline{\mathbb{K}}}(-\infty,-\infty)=d_{\overline{\mathbb{K}}}(\infty,\infty)&= -\infty
\end{align*}
hold.
\end{ex}

\begin{ex} 
The set
$\mathbb{K}_+\cup\{\infty\}$
becomes a $\overline{\mathbb{K}}_+^\text{Cart}$-category
by the following distance function:
\begin{align*}
	d_{\overline{\mathbb{K}}_+^\text{Cart}}(x,y)
	&=x\supset y\\
	&=\begin{cases}
	0 &(x\geq y)\\
	y &(x<y).
	\end{cases}
\end{align*}
\end{ex}

\section{Functor Categories and the Yoneda Embedding}
Now the theory has reached the stage where we perform 
some limiting processes within $\mathcal{V}_0$.
Therefore, we finally assume that our enriching category
$\mathcal{V}$ is an \textit{SMC-CL}.
We begin this section with the definition of 
a $\mathcal{V}$-category of all $\mathcal{V}$-functors 
with specified domain and codomain, which is intuitively
seen as an analog of a function space in some sense.

\begin{prop}
Let $\mathcal{V}=(\mathcal{V}_0,\otimes,[-,-],e)$ 
be an SMC-CL, and 
$\mathcal{A}=(\ob{\mathcal{A}},\Hom_\mathcal{A})$
and 
$\mathcal{B}=(\ob{\mathcal{B}},\Hom_\mathcal{B})$ be
$\mathcal{V}$-categories.
Then there is a $\mathcal{V}$-category denoted by 
$\funct{\mathcal{A}}{\mathcal{B}}=([\mathcal{A},\mathcal{B}]_0,\Hom_\funct{\mathcal{A}}{\mathcal{B}})$, where
\begin{itemize}
\item
$\ob{\funct{\mathcal{A}}{\mathcal{B}}}$ is
$[\mathcal{A},\mathcal{B}]_0$, the set of 
all $\mathcal{V}$-functors from $\mathcal{A}$ 
to $\mathcal{B}$;
\item
the hom-object function
$\Hom_\funct{\mathcal{A}}{\mathcal{B}}$
is given as
\[
	\Hom_\funct{\mathcal{A}}{\mathcal{B}}(F,G)
	= \bigwedge_{a\in \ob{\mathcal{A}}}
	\left\{\Hom_\mathcal{B}\big(F(a),G(a)\big)\right\},
\]
where the infimum is taken in the complete lattice 
$\mathcal{V}_0$.
\end{itemize} 
\end{prop}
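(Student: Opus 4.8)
The plan is to verify the two $\mathcal{V}$-category axioms for the proposed structure on $\funct{\mathcal{A}}{\mathcal{B}}$, namely the identity law and the composition law, using nothing beyond the universal property of the infimum in the complete lattice $\mathcal{V}_0$ (which guarantees the displayed infima exist), the monotonicity of $\otimes$, and the two axioms already available for $\mathcal{B}$.

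First I would dispatch the identity law, which asks for $e\sqsubseteq\Hom_\funct{\mathcal{A}}{\mathcal{B}}(F,F)$. By the identity law for $\mathcal{B}$ we have $e\sqsubseteq\Hom_\mathcal{B}\big(F(a),F(a)\big)$ for every $a\in\ob{\mathcal{A}}$, so $e$ is a lower bound of the family $\{\Hom_\mathcal{B}(F(a),F(a))\}_{a}$. Since the infimum is the greatest lower bound, $e$ lies below it, which is precisely the claim.

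Next, for the composition law the claim is $\Hom_\funct{\mathcal{A}}{\mathcal{B}}(F,G)\otimes\Hom_\funct{\mathcal{A}}{\mathcal{B}}(G,H)\sqsubseteq\Hom_\funct{\mathcal{A}}{\mathcal{B}}(F,H)$. Because the right-hand side is an infimum over $a\in\ob{\mathcal{A}}$, the universal property reduces the task to showing, for each fixed $a$, that the left-hand side is below $\Hom_\mathcal{B}\big(F(a),H(a)\big)$. For such a fixed $a$ I would use the one-sided bounds $\bigwedge_{a'}\Hom_\mathcal{B}(F(a'),G(a'))\sqsubseteq\Hom_\mathcal{B}(F(a),G(a))$ and $\bigwedge_{a'}\Hom_\mathcal{B}(G(a'),H(a'))\sqsubseteq\Hom_\mathcal{B}(G(a),H(a))$, apply monotonicity of $\otimes$ to combine them, and then invoke the composition law for $\mathcal{B}$ at the object $a$. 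Chaining these gives
\[
\Hom_\funct{\mathcal{A}}{\mathcal{B}}(F,G)\otimes\Hom_\funct{\mathcal{A}}{\mathcal{B}}(G,H)
\sqsubseteq
\Hom_\mathcal{B}\big(F(a),G(a)\big)\otimes\Hom_\mathcal{B}\big(G(a),H(a)\big)
\sqsubseteq
\Hom_\mathcal{B}\big(F(a),H(a)\big),
\]
and taking the infimum over $a$ on the right closes the argument.

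The one point that demands care—and what I would flag as the crux—is that $\otimes$ is only assumed monotone, not infimum-preserving, so one cannot distribute $\otimes$ across the two $\bigwedge$'s and argue termwise from the start. The proof must therefore avoid evaluating $(\bigwedge\cdots)\otimes(\bigwedge\cdots)$ as an infimum; fixing $a$ \emph{before} applying $\otimes$, and relying solely on the one-sided bounds $\bigwedge\sqsubseteq(\text{each term})$ together with monotonicity, is exactly what sidesteps this subtlety. Everything else is a short monotone chain.
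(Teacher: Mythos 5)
Your proposal is correct and follows essentially the same route as the paper's own proof: for each fixed $a$, bound each infimum by its $a$-th term, combine via monotonicity of $\otimes$, apply the composition (resp.\ identity) law for $\mathcal{B}$, and then pass to the infimum over $a$. Your closing remark—that only monotonicity of $\otimes$ is needed, not any infimum-preservation—is a fair observation and is exactly the property the paper's argument relies on as well.
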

\begin{proof}
\begin{description}[font=\normalfont]
\item[[Composition law\!\!\!]]
The claim is 
\[
\Hom_\funct{\mathcal{A}}{\mathcal{B}}(F,G)\otimes 
\Hom_\funct{\mathcal{A}}{\mathcal{B}}(G,H)\sqsubseteq 
\Hom_\funct{\mathcal{A}}{\mathcal{B}}(F,H).
\]
For each $a\in \ob{\mathcal{A}}$, we have
\begin{align*}
&\Hom_\funct{\mathcal{A}}{\mathcal{B}}(F,G)\otimes 
\Hom_\funct{\mathcal{A}}{\mathcal{B}}(G,H)\\
&\qquad= \bigwedge_{a^\prime\in \ob{\mathcal{A}}}
\left\{\Hom_\mathcal{B}\big(F(a^\prime),G(a^\prime)\big)\right\}\otimes
\bigwedge_{a^\prime\in \ob{\mathcal{A}}}
\left\{\Hom_\mathcal{B}\big(G(a^\prime),H(a^\prime)\big)\right\}\\
&\qquad\sqsubseteq 
\Hom_\mathcal{B}\big(F(a),G(a)\big) \otimes
\Hom_\mathcal{B}\big(G(a),H(a)\big)\\
&\qquad\sqsubseteq
\Hom_\mathcal{B}\big(F(a),H(a)\big).
\end{align*}
Taking the infimum with respect to $a$, we
obtain the required result.
\item[[Identity law\!\!\!]]
The claim is 
$e\sqsubseteq \Hom_\funct{\mathcal{A}}{\mathcal{B}}(F,F)$.
The identity law for $\mathcal{B}$ assures 
\[
	e\sqsubseteq \Hom_\mathcal{B}\big(F(a),F(a)\big),
\]
and the inequality is preserved after we
take the infimum with respect to $a$.
\end{description}
\end{proof}

\begin{defn}
The $\mathcal{V}$-category  
$\funct{\mathcal{A}}{\mathcal{B}}$ is called a
\textbf{functor category},
$\mathcal{A}$ its \textbf{domain} and $\mathcal{B}$
its \textbf{codomain}.
\end{defn}

Given a $\mathcal{V}$-category $\mathcal{A}$,
there are many functor categories associated with $\mathcal{A}$. 
But two of them are of special importance; 
they are called the
\textit{presheaf category} and the
\textit{op-copresheaf category} of $\mathcal{A}$.

\begin{defn}
Let $\mathcal{V}=(\mathcal{V}_0,\otimes,[-,-],e)$ 
be an SMC-CL, and 
$\mathcal{A}=(\ob{\mathcal{A}},\Hom_\mathcal{A})$
be a $\mathcal{V}$-category.
A \textbf{presheaf} $P$ on $\mathcal{A}$ is a
$\mathcal{V}$-functor
$P\colon\mathcal{A}^\op \longrightarrow \mathcal{V}$,
that is, a map 
$\ob{P}\colon \ob{\mathcal{A}}\longrightarrow \mathcal{V}_0$
satisfying
\[
	\Hom_\mathcal{A}(a,b)\sqsubseteq [P(b),P(a)]
\]
for every $a,b\in \ob{\mathcal{A}}$.
A \textbf{copresheaf} $Q$ on $\mathcal{A}$ is a
$\mathcal{V}$-functor
$Q\colon\mathcal{A} \longrightarrow \mathcal{V}$,
that is, a map 
$\ob{Q}\colon \ob{\mathcal{A}}\longrightarrow \mathcal{V}_0$
satisfying
\[
	\Hom_\mathcal{A}(a,b)\sqsubseteq [Q(a),Q(b)]
\]
for every $a,b\in \ob{\mathcal{A}}$.

We call the functor category
$\funct{\mathcal{A}^\op}{\mathcal{V}}$
the \textbf{presheaf category} of $\mathcal{A}$, and
the category
$\funct{\mathcal{A}}{\mathcal{V}}^\op$
the \textbf{op-copresheaf category} of $\mathcal{A}$.
\end{defn}

Presheaf categories and 
op-copresheaf categories are important because
the original category embeds into them canonically.
This is called the Yoneda embedding theorem;
but before we present and prove it, let us
see some examples of 
functor categories and presheaf or op-copresheaf
categories:

\begin{ex} 
Let $\mathcal{A}=(\ob{\mathcal{A}},\preceq_\mathcal{A})$ 
and $\mathcal{B}=(\ob{\mathcal{B}},\preceq_\mathcal{B})$
be $\textbf{2}$-categories. 
The functor category 
$\funct{\mathcal{A}}{\mathcal{B}}=([\mathcal{A},\mathcal{B}]_0,\preceq_\funct{\mathcal{A}}{\mathcal{B}})$ 
is given as follows, noting that $\bigwedge$ in
$\textbf{2}_0$ corresponds to ``for all'':
\begin{itemize}
	\item $[\mathcal{A},\mathcal{B}]_0$ is the set of
	all monotonic maps from $\mathcal{A}$ to
	$\mathcal{B}$;
	\item $(F\preceq_\funct{\mathcal{A}}{\mathcal{B}}G)=\true\iff
	\big(F(a)\preceq_\mathcal{B}G(a)\big)=\true$ for all
	$a\in \ob{\mathcal{A}}$.
\end{itemize}
Thus, as a binary relation on the set
$[\mathcal{A},\mathcal{B}]_0$, 
$\preceq_\funct{\mathcal{A}}{\mathcal{B}}$ 
coincides with the canonical ordering $\Rightarrow$.

A presheaf $P$ on $\mathcal{A}$ is a monotonic map
$P\colon\mathcal{A}^\op \longrightarrow \textbf{2}$,
and by taking the inverse image of 
$\{\true\}\subseteq\textbf{2}_0$, it
can be seen as a \textit{lower set} of $\mathcal{A}$
(a lower set $S$ of a preorder $(A,\preceq)$ is a subset
$S\subseteq A$ such that $a\in S$ and $b\preceq a$
imply $b\in S$).
In fact, presheaves on $\mathcal{A}$ correspond
one-to-one with lower sets of $\mathcal{A}$.
Moreover, the ordering 
$\preceq_\funct{\mathcal{A}^\op}{\textbf{2}}$
on the presheaf category coincides with the 
inclusion ordering $\subseteq$ on the set of lower sets:
for all presheaves $P_1,P_2$ on $\mathcal{A}$,
\[
P_1\preceq_\funct{\mathcal{A}^\op}{\textbf{2}}P_2
\iff \ob{P_1}^{-1}\big(\{\true\}\big)\subseteq \ob{P_2}^{-1}\big(\{\true\}\big)
\]
hold.

A copresheaf $Q$ on $\mathcal{A}$ is a monotonic map
$Q\colon\mathcal{A} \longrightarrow \textbf{2}$.
If we take the inverse image of 
$\{\true\}\subseteq\textbf{2}_0$, we obtain an
\textit{upper set} of $\mathcal{A}$.
Copresheaves on $\mathcal{A}$ correspond
one-to-one with upper sets of $\mathcal{A}$; also,
the ordering 
$\preceq_{\funct{\mathcal{A}}{\textbf{2}}^\op}=\succeq_\funct{\mathcal{A}}{\textbf{2}}$
on the op-copresheaf category coincides with the opposite
$\supseteq$ of the 
inclusion ordering $\subseteq$ on the set of upper sets:
for all copresheaves $Q_1,Q_2$ on $\mathcal{A}$,
\[
Q_1\preceq_{\funct{\mathcal{A}}{\textbf{2}}^\op}Q_2
\iff \ob{Q_1}^{-1}\big(\{\true\}\big)\supseteq \ob{Q_2}^{-1}\big(\{\true\}\big)
\]
hold.

To summarize, the presheaf category
$\funct{\mathcal{A}^\op}{\textbf{2}}$ 
of $\mathcal{A}$ can naturally be
seen as the preordered set (in fact, poset) of
lower sets of $\mathcal{A}$, and the 
op-copresheaf category 
$\funct{\mathcal{A}}{\textbf{2}}^\op$
as that of upper sets of $\mathcal{A}$.
\end{ex}

\begin{ex} 
Let $\mathcal{A}=(\ob{\mathcal{A}},d_\mathcal{A})$ 
and $\mathcal{B}=(\ob{\mathcal{B}},d_\mathcal{B})$
be $\overline{\mathbb{K}}_+$-categories.
The functor category 
$\funct{\mathcal{A}}{\mathcal{B}}=([\mathcal{A},\mathcal{B}]_0,d_\funct{\mathcal{A}}{\mathcal{B}})$ 
is given as follows, noting that $\bigwedge$ in
$\mathbb{K}_+\cup\{\infty\}$ corresponds to (the usual)
$\sup$:
\begin{itemize}
	\item $[\mathcal{A},\mathcal{B}]_0$ is the set of
	all nonexpansive maps from $\mathcal{A}$ to
	$\mathcal{B}$;
	\item $d_\funct{\mathcal{A}}{\mathcal{B}}(F,G)=
	\sup_{a\in \ob{\mathcal{A}}}
	\left\{d_\mathcal{B}\big(F(a),G(a)\big)\right\}$.
\end{itemize}
Therefore, in the current setting, the natural distance
on a function space turns out to be the sup-distance.
Since we adjoined $\infty$ and turned $\mathbb{K}_+$ into
a complete lattice, we can get rid of additional 
assumptions such as the compactness of $\mathcal{A}$ or 
the boundedness of $\mathcal{B}$, the usual 
requirements when performing the sup-distance construction
in the theory of classical metric spaces.

A presheaf $P$ on $\mathcal{A}$ is a nonexpansive map
$P\colon\mathcal{A}^\op \longrightarrow \overline{\mathbb{K}}_+$,
namely, a map 
$\ob{P}\colon$ $\ob{A}\longrightarrow \mathbb{K}_+\cup\{\infty\}$
satisfying 
\[
	d_\mathcal{A}(a,b)\geq P(a)\dotminus P(b)
\]
for all $a,b\in \ob{\mathcal{A}}$,
and can be seen as a scalar-valued function on
$\mathcal{A}$.
The distance between two presheaves $P_1$ and $P_2$ is 
given by
\[
	d_\funct{\mathcal{A}^\op}{\overline{\mathbb{K}}_+}(P_1,P_2)
	=\sup_{a\in \ob{\mathcal{A}}}
	\{P_2(a)\dotminus P_1(a)\}.
\]
The canonical ordering $\Rightarrow$ is defined as 
\begin{align*}
	P_1\Rightarrow P_2 
	&\iff 0\geq P_2(a)\dotminus P_1(a)\quad (\forall a\in \ob{\mathcal{A}})\\
	&\iff P_2(a)\dotminus P_1(a)=0\quad (\forall a\in \ob{\mathcal{A}})\\
	&\iff d_\funct{\mathcal{A}^\op}{\overline{\mathbb{K}}_+}(P_1,P_2)=0\\
	&\iff P_1(a)\geq P_2(a)\quad (\forall a\in \ob{\mathcal{A}}).
\end{align*}

A copresheaf $Q$ on $\mathcal{A}$ is a nonexpansive map
$Q\colon\mathcal{A} \longrightarrow \overline{\mathbb{K}}_+$,
namely, a map 
$\ob{Q}\colon$ $\ob{A}\longrightarrow \mathbb{K}_+\cup\{\infty\}$
satisfying 
\[
	d_\mathcal{A}(a,b)\geq Q(b)\dotminus Q(a)
\]
for all $a,b\in \ob{\mathcal{A}}$,
and forms another natural class of scalar-valued
functions on $\mathcal{A}$.
The distance between two copresheaves $Q_1$ and $Q_2$ 
in the op-copresheaf category
$\funct{\mathcal{A}}{\overline{\mathbb{K}}_+}^\op$ 
is given by
\[
	d_{\funct{\mathcal{A}}{\overline{\mathbb{K}}_+}^\op}(Q_1,Q_2)
	=\sup_{a\in \ob{\mathcal{A}}}
	\{Q_1(a)\dotminus Q_2(a)\}.
\]
As $\overline{\mathbb{K}}_+$-functors, the canonical 
ordering $\Rightarrow$ on copresheaves is determined 
by conditions similar to that for presheaves.
\end{ex}

\begin{ex} 
Let $\mathcal{A}=(\ob{\mathcal{A}},d_\mathcal{A})$ 
and $\mathcal{B}=(\ob{\mathcal{B}},d_\mathcal{B})$
be $\overline{\mathbb{K}}$-categories.
The functor category 
$\funct{\mathcal{A}}{\mathcal{B}}=([\mathcal{A},\mathcal{B}]_0,d_\funct{\mathcal{A}}{\mathcal{B}})$ 
is given as follows:
\begin{itemize}
	\item $[\mathcal{A},\mathcal{B}]_0$ is the set of
	all nonexpansive maps from $\mathcal{A}$ to
	$\mathcal{B}$;
	\item $d_\funct{\mathcal{A}}{\mathcal{B}}(F,G)=
	\sup_{a\in \ob{\mathcal{A}}}
	\left\{d_\mathcal{B}\big(F(a),G(a)\big)\right\}$.
\end{itemize}

A presheaf $P$ on $\mathcal{A}$ is a nonexpansive map
$P\colon\mathcal{A}^\op \longrightarrow \overline{\mathbb{K}}$,
namely, a map 
$\ob{P}\colon$ $\ob{A}\longrightarrow \mathbb{K}\cup\{-\infty,\infty\}$
satisfying 
\[
	d_\mathcal{A}(a,b)\geq P(a)-P(b)
\]
for all $a,b\in \ob{\mathcal{A}}$.
The distance between two presheaves $P_1$ and $P_2$ is 
given by
\[
	d_\funct{\mathcal{A}^\op}{\overline{\mathbb{K}}}(P_1,P_2)
	=\sup_{a\in \ob{\mathcal{A}}}
	\{P_2(a)- P_1(a)\}.
\]
The canonical ordering $\Rightarrow$ is defined as 
\begin{align*}
	P_1\Rightarrow P_2 
	&\iff 0\geq P_2(a)- P_1(a)\quad (\forall a\in \ob{\mathcal{A}})\\
	&\iff d_\funct{\mathcal{A}^\op}{\overline{\mathbb{K}}}(P_1,P_2)\leq 0\\
	&\iff P_1(a)\geq P_2(a)\quad (\forall a\in \ob{\mathcal{A}}).
\end{align*}

A copresheaf $Q$ on $\mathcal{A}$ is a nonexpansive map
$Q\colon\mathcal{A} \longrightarrow \overline{\mathbb{K}}$,
namely, a map 
$\ob{Q}\colon$ $\ob{A}\longrightarrow \mathbb{K}\cup\{-\infty,\infty\}$
satisfying 
\[
	d_\mathcal{A}(a,b)\geq Q(b)- Q(a)
\]
for all $a,b\in \ob{\mathcal{A}}$.
The distance between two copresheaves $Q_1$ and $Q_2$ 
in the op-copresheaf category
$\funct{\mathcal{A}}{\overline{\mathbb{K}}}^\op$ is given by
\[
	d_{\funct{\mathcal{A}}{\overline{\mathbb{K}}}^\op}(Q_1,Q_2)
	=\sup_{a\in \ob{\mathcal{A}}}
	\{Q_1(a)- Q_2(a)\}.
\]
\end{ex}

\begin{ex} 
Let $\mathcal{A}=(\ob{\mathcal{A}},d_\mathcal{A})$ 
and $\mathcal{B}=(\ob{\mathcal{B}},d_\mathcal{B})$
be $\overline{\mathbb{K}}_+^\text{Cart}$-categories.
The functor category 
$\funct{\mathcal{A}}{\mathcal{B}}=([\mathcal{A},\mathcal{B}]_0,d_\funct{\mathcal{A}}{\mathcal{B}})$ 
is given as follows:
\begin{itemize}
	\item $[\mathcal{A},\mathcal{B}]_0$ is the set of
	all nonexpansive maps from $\mathcal{A}$ to
	$\mathcal{B}$;
	\item $d_\funct{\mathcal{A}}{\mathcal{B}}(F,G)=
	\sup_{a\in \ob{\mathcal{A}}}
	\left\{d_\mathcal{B}\big(F(a),G(a)\big)\right\}$.
\end{itemize}

A presheaf $P$ on $\mathcal{A}$ is a nonexpansive map
$P\colon\mathcal{A}^\op \longrightarrow \overline{\mathbb{K}}_+^\text{Cart}$,
namely, a map 
$\ob{P}\colon$ $\ob{A}\longrightarrow \mathbb{K}_+\cup\{\infty\}$
satisfying 
\begin{align*}
	d_\mathcal{A}(a,b)&\geq P(b)\supset P(a)\\
		&=\begin{cases}
		0 &(P(b)\geq P(a))\\
		P(a) &(P(b)<P(a)),
		\end{cases}
\end{align*}
or equivalently,
\[
	P(b)<P(a)\implies d_\mathcal{A}(a,b)\geq P(a)
\]
for all $a,b\in \ob{\mathcal{A}}$.
The distance between two presheaves $P_1$ and $P_2$ is 
given by
\begin{align*}
	d_\funct{\mathcal{A}^\op}{\overline{\mathbb{K}}_+^\text{Cart}}(P_1,P_2)
	&=\sup_{a\in \ob{\mathcal{A}}}
	\{P_1(a)\supset P_2(a)\}\\
	&=\sup_{\substack{a\in \ob{\mathcal{A}}\\P_1(a)<P_2(a)}}
	\{P_2(a)\}.
\end{align*}
The canonical ordering $\Rightarrow$ is defined as 
\begin{align*}
	P_1\Rightarrow P_2 
	&\iff 0\geq P_1(a)\supset P_2(a)\quad (\forall a\in \ob{\mathcal{A}})\\
	&\iff P_1(a)\supset P_2(a)=0\quad (\forall a\in \ob{\mathcal{A}})\\
	&\iff d_\funct{\mathcal{A}^\op}{\overline{\mathbb{K}}_+^\text{Cart}}(P_1,P_2)=0\\
	&\iff P_1(a)\geq P_2(a)\quad (\forall a\in \ob{\mathcal{A}}).
\end{align*}

A copresheaf $Q$ on $\mathcal{A}$ is a nonexpansive map
$Q\colon\mathcal{A} \longrightarrow \overline{\mathbb{K}}_+^\text{Cart}$,
namely, a map 
$\ob{Q}\colon$ $\ob{A}\longrightarrow \mathbb{K}_+\cup\{\infty\}$
satisfying 
\begin{align*}
	d_\mathcal{A}(a,b)&\geq Q(a)\supset Q(b)\\
	&=\begin{cases}
	0 &(Q(a)\geq Q(b))\\
	Q(b) &(Q(a)<Q(b)),
	\end{cases}
\end{align*}
or equivalently,
\[
	Q(a)<Q(b)\implies d_\mathcal{A}(a,b)\geq Q(b)
\]
for all $a,b\in \ob{\mathcal{A}}$.
The distance between two copresheaves $Q_1$ and $Q_2$ 
in the op-copresheaf category
$\funct{\mathcal{A}}{\overline{\mathbb{K}}_+^\text{Cart}}^\op$ is given by
\begin{align*}
	d_{\funct{\mathcal{A}}{\overline{\mathbb{K}}_+^\text{Cart}}^\op}(Q_1,Q_2)
	&=\sup_{a\in \ob{\mathcal{A}}}
	\{Q_2(a)\supset Q_1(a)\}\\
	&=\sup_{\substack{a\in \ob{\mathcal{A}}\\Q_2(a)<Q_1(a)}}
	\{Q_1(a)\}.
\end{align*}
\end{ex}

Now we present the Yoneda embedding theorem:
\begin{thm}
Let $\mathcal{V}=(\mathcal{V}_0,\otimes,[-,-],e)$ 
be an SMC-CL, and 
$\mathcal{A}=(\ob{\mathcal{A}},\Hom_\mathcal{A})$ be a
$\mathcal{V}$-category.
Then there are fully faithful $\mathcal{V}$-functors
$Y$ and $\overline{Y}$, defined as follows:
\begin{align*}
Y\colon \mathcal{A}\longrightarrow \funct{\mathcal{A}^\op}{\mathcal{V}},\qquad
& \ob{Y(b)}=\lambda a\in \ob{\mathcal{A}^\op}.\,\Hom_\mathcal{A}(a,b)
\quad (\forall b\in \ob{\mathcal{A}});\\
\overline{Y}\colon \mathcal{A}\longrightarrow \funct{\mathcal{A}}{\mathcal{V}}^\op,\qquad
&\ob{\overline{Y}(a)}=\lambda b\in \ob{\mathcal{A}}.\,\Hom_\mathcal{A}(a,b)
\quad (\forall a\in \ob{\mathcal{A}^\op}).
\end{align*}
\end{thm}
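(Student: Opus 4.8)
The plan is to establish three things for $Y$, and then dually for $\overline{Y}$: that each $Y(b)$ genuinely lands in the presheaf category (so the codomain is correctly stated), that $Y$ satisfies the increasing condition (so it is a $\mathcal{V}$-functor at all), and that this increasing inequality is actually an equality (full faithfulness). The constant companion throughout is the adjointness relation, which lets me pass freely between the $\otimes$-form of the composition law for $\mathcal{A}$, namely $\Hom_\mathcal{A}(a,b)\otimes\Hom_\mathcal{A}(b,c)\sqsubseteq\Hom_\mathcal{A}(a,c)$, and its curried form $\Hom_\mathcal{A}(a,b)\sqsubseteq[\Hom_\mathcal{A}(b,c),\Hom_\mathcal{A}(a,c)]$. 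The latter is word-for-word the presheaf condition for $Y(c)$, so each $Y(b)$ lies in $\funct{\mathcal{A}^\op}{\mathcal{V}}$ essentially by definition.

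For the increasing condition of $Y$, I would unwind the functor-category hom-object and reduce the claim $\Hom_\mathcal{A}(b,b')\sqsubseteq\bigwedge_{a}[\Hom_\mathcal{A}(a,b),\Hom_\mathcal{A}(a,b')]$, via the universal property of infima, to showing for each fixed $a$ that $\Hom_\mathcal{A}(b,b')\sqsubseteq[\Hom_\mathcal{A}(a,b),\Hom_\mathcal{A}(a,b')]$. Uncurrying by adjointness turns this into $\Hom_\mathcal{A}(a,b)\otimes\Hom_\mathcal{A}(b,b')\sqsubseteq\Hom_\mathcal{A}(a,b')$, once more an instance of the composition law. So this direction carries no genuine difficulty.

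The only step with real content is the reverse inequality $\bigwedge_{a}[\Hom_\mathcal{A}(a,b),\Hom_\mathcal{A}(a,b')]\sqsubseteq\Hom_\mathcal{A}(b,b')$, which is where full faithfulness lives. Here I would use the Yoneda trick: discard every term of the infimum except the one at $a=b$, obtaining $\bigwedge_{a}[\Hom_\mathcal{A}(a,b),\Hom_\mathcal{A}(a,b')]\sqsubseteq[\Hom_\mathcal{A}(b,b),\Hom_\mathcal{A}(b,b')]$, and then collapse the right-hand side using the identity law $e\sqsubseteq\Hom_\mathcal{A}(b,b)$ together with the contravariant monotonicity of $[-,-]$ in its first slot and the unit law $[e,z]=z$, which give $[\Hom_\mathcal{A}(b,b),\Hom_\mathcal{A}(b,b')]\sqsubseteq[e,\Hom_\mathcal{A}(b,b')]=\Hom_\mathcal{A}(b,b')$. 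I expect this to be the main (indeed essentially the only) obstacle, although it is quite short once one sees that instantiating the bound variable at the base point is exactly the right move.

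Finally, $\overline{Y}$ follows the same three-step pattern with two cosmetic adjustments. Because the hom-object in $\funct{\mathcal{A}}{\mathcal{V}}^\op$ reverses its arguments, the relevant quantity becomes $\bigwedge_{b}[\Hom_\mathcal{A}(a',b),\Hom_\mathcal{A}(a,b)]$, and the reduction of the increasing condition to the composition law now passes first through the commutative law for $[-,-]$ (namely $x\sqsubseteq[y,z]\iff y\sqsubseteq[x,z]$) and only then uncurries; the well-definedness check that each $\overline{Y}(a)$ is a copresheaf is again just the curried composition law. The full-faithfulness step is identical to the one above, this time instantiating the infimum at $b=a'$ and invoking the identity and unit laws exactly as before.
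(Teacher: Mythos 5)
Your proposal is correct and takes essentially the same route as the paper's proof: well-definedness of $Y(b)$ and $\overline{Y}(a)$ via the curried (adjointness) form of the composition law, the increasing condition reduced termwise through the universal property of the infimum, and full faithfulness obtained by instantiating the infimum at the base object ($a=b$, resp.\ $b=a^\prime$) and collapsing $[\Hom_\mathcal{A}(b,b),\Hom_\mathcal{A}(b,b^\prime)]$ with the identity law, the monotonicity of $[-,-]$, and the unit law. The only cosmetic difference is bookkeeping: the paper observes that the presheaf condition for $Y(b)$ is literally the termwise inequality needed for full faithfulness of $\overline{Y}$ (and dually), so it proves each inequality once and cross-references, whereas you re-derive the same inequalities in place.
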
 
\begin{proof}
We first show that the values $Y(b)$ and $\overline{Y}(a)$
are indeed a presheaf or a copresheaf, respectively.
\begin{description}[font=\normalfont]
\item[[$Y(b)$ is a presheaf on $\mathcal{A}\!\!\!$]]
	The claim is 
	\begin{align*}
		\Hom_{\mathcal{A}}(a,a^\prime) &\sqsubseteq
		[Y(b)(a^\prime),Y(b)(a)] \\
		&= [\Hom_\mathcal{A}(a^\prime,b),\Hom_\mathcal{A}(a,b)],
	\end{align*}
	and this is equivalent to an instance of the 
	composition law for $\mathcal{A}$:
	\begin{prooftree}
		\def\fCenter{\ \sqsubseteq\ }
		\alwaysDoubleLine
		\Axiom$\Hom_{\mathcal{A}}(a,a^\prime)\fCenter [\Hom_\mathcal{A}(a^\prime,b),\Hom_\mathcal{A}(a,b)]$
		\UnaryInf$\Hom_{\mathcal{A}}(a,a^\prime)\otimes  \Hom_\mathcal{A}(a^\prime,b)\fCenter \Hom_\mathcal{A}(a,b)$
	\end{prooftree}
\item[[$\overline{Y}(a)$ is a copresheaf on $\mathcal{A}\!\!\!$]]
	The claim is 
	\begin{align*}
		\Hom_{\mathcal{A}}(b,b^\prime) &\sqsubseteq
		[\overline{Y}(a)(b),\overline{Y}(a)(b^\prime)] \\
		&= [\Hom_\mathcal{A}(a,b),\Hom_\mathcal{A}(a,b^\prime)],
	\end{align*}
	and this is equivalent to an instance of the 
	composition law for $\mathcal{A}$:
	\begin{prooftree}
		\def\fCenter{\ \sqsubseteq\ }
		\alwaysDoubleLine
		\Axiom$\Hom_{\mathcal{A}}(b,b^\prime)\fCenter [\Hom_\mathcal{A}(a,b),\Hom_\mathcal{A}(a,b^\prime)]$
		\UnaryInf$\Hom_{\mathcal{A}}(b,b^\prime)\otimes  \Hom_\mathcal{A}(a,b)\fCenter \Hom_\mathcal{A}(a,b^\prime)$
		\UnaryInf$\Hom_{\mathcal{A}}(a,b)\otimes  \Hom_\mathcal{A}(b,b^\prime)\fCenter \Hom_\mathcal{A}(a,b^\prime)$
	\end{prooftree}
\end{description}
Next we show that $Y$ and $\overline{Y}$ are 
fully faithful $\mathcal{V}$-functors:
\begin{description}[font=\normalfont]
\item[[$Y$ is a fully faithful $\mathcal{V}$-functor\!\!\!]]
	The claim is 
	\begin{align*}
		\Hom_{\mathcal{A}}(b,b^\prime) &=
		\Hom_\funct{\mathcal{A}^\op}{\mathcal{V}}(Y(b),Y(b^\prime)) \\
		&= \bigwedge_{a\in \ob{\mathcal{A}}}
		\left\{[Y(b)(a),Y(b^\prime)(a)]\right\} \\
		&= \bigwedge_{a\in \ob{\mathcal{A}}}
		\left\{[\Hom_\mathcal{A}(a,b),\Hom_\mathcal{A}(a,b^\prime)]\right\}.
	\end{align*}
	We prove this by showing the two inequalities:
	\begin{description}[font=\normalfont]
	\item[[\mbox{$\Hom_{\mathcal{A}}(b,b^\prime)
	\sqsubseteq \bigwedge_{a\in \ob{\mathcal{A}}}
	\left\{[\Hom_\mathcal{A}(a,b),\Hom_\mathcal{A}(a,b^\prime)]\right\}$}\!\!\!]]
	It is equivalent to 
	\[
	\Hom_{\mathcal{A}}(b,b^\prime)
	\sqsubseteq [\Hom_\mathcal{A}(a,b),\Hom_\mathcal{A}(a,b^\prime)]\quad(\forall a\in \ob{\mathcal{A}})
	\]
	and its proof for some fixed $a\in \ob{\mathcal{A}}$
	is already given above when we prove that
	$\overline{Y}(a)$ is a copresheaf on $\mathcal{A}$.
	\item[[\mbox{$\bigwedge_{a\in \ob{\mathcal{A}}}
	\left\{[\Hom_\mathcal{A}(a,b),\Hom_\mathcal{A}(a,b^\prime)]\right\}
	\sqsubseteq \Hom_{\mathcal{A}}(b,b^\prime)$}\!\!\!]]
	First note that
	\[
	\bigwedge_{a\in \ob{\mathcal{A}}}
	\left\{[\Hom_\mathcal{A}(a,b),\Hom_\mathcal{A}(a,b^\prime)]\right\}\sqsubseteq
	[\Hom_\mathcal{A}(b,b),\Hom_\mathcal{A}(b,b^\prime)]
	\]
	holds.
	Using an instance of the identity law for $\mathcal{A}$,
	$e\sqsubseteq \Hom_\mathcal{A}(b,b)$,
	we have the following:
	\begin{align*}
	[\Hom_\mathcal{A}(b,b),\Hom_\mathcal{A}(b,b^\prime)]
	&\sqsubseteq [e,\Hom_\mathcal{A}(b,b^\prime)]\\
	&= \Hom_\mathcal{A}(b,b^\prime).
	\end{align*}
	Therefore the claim follows.
	\end{description}
\item[[$\overline{Y}$ is a fully faithful $\mathcal{V}$-functor\!\!\!]]
	The claim is 
	\begin{align*}
		\Hom_{\mathcal{A}}(a,a^\prime) &=
		\Hom_\funct{\mathcal{A}}{\mathcal{V}}(\overline{Y}(a^\prime),\overline{Y}(a)) \\
		&= \bigwedge_{b\in \ob{\mathcal{A}}}
		\left\{[\overline{Y}(a^\prime)(b),\overline{Y}(a)(b)]\right\} \\
		&= \bigwedge_{b\in \ob{\mathcal{A}}}
		\left\{[\Hom_\mathcal{A}(a^\prime,b),\Hom_\mathcal{A}(a,b)]\right\}.
	\end{align*}
	We prove this by showing the two inequalities:
	\begin{description}[font=\normalfont]
	\item[[\mbox{$\Hom_{\mathcal{A}}(a,a^\prime)
	\sqsubseteq \bigwedge_{b\in \ob{\mathcal{A}}}
	\left\{[\Hom_\mathcal{A}(a^\prime,b),\Hom_\mathcal{A}(a,b)]\right\}$}\!\!\!]]
	It is equivalent to 
	\[
	\Hom_{\mathcal{A}}(a,a^\prime)
	\sqsubseteq [\Hom_\mathcal{A}(a^\prime,b),\Hom_\mathcal{A}(a,b)]\quad(\forall b\in \ob{\mathcal{A}})
	\]
	and its proof for some fixed $b\in \ob{\mathcal{A}}$
	is already given above when we prove that
	$Y(b)$ is a presheaf on $\mathcal{A}$.
	\item[[\mbox{$\bigwedge_{b\in \ob{\mathcal{A}}}
	\left\{[\Hom_\mathcal{A}(a^\prime,b),\Hom_\mathcal{A}(a,b)]\right\}
	\sqsubseteq \Hom_{\mathcal{A}}(a,a^\prime)$}\!\!\!]]
	First note that
	\[
	\bigwedge_{b\in \ob{\mathcal{A}}}
	\left\{[\Hom_\mathcal{A}(a^\prime,b),\Hom_\mathcal{A}(a,b)]\right\}\sqsubseteq
	[\Hom_\mathcal{A}(a^\prime,a^\prime),\Hom_\mathcal{A}(a,a^\prime)]
	\]
	holds.
	Using an instance of the identity law for $\mathcal{A}$,
	$e\sqsubseteq \Hom_\mathcal{A}(a^\prime,a^\prime)$,
	we have the following:
	\begin{align*}
	[\Hom_\mathcal{A}(a^\prime,a^\prime),\Hom_\mathcal{A}(a,a^\prime)]
	&\sqsubseteq [e,\Hom_\mathcal{A}(a,a^\prime)]\\
	&= \Hom_\mathcal{A}(a,a^\prime).
	\end{align*}
	Therefore the claim follows.
	\end{description}
\end{description}
\end{proof}
\begin{defn}
The $\mathcal{V}$-functor $Y$ is called the 
\textbf{Yoneda embedding}, and 
$\overline{Y}$ the \textbf{co-Yoneda embedding}.
\end{defn}
We see some examples of the (co-)Yoneda embeddings.

\begin{ex} 
Let $\mathcal{A}=(\ob{\mathcal{A}},\preceq_\mathcal{A})$ 
be a $\textbf{2}$-category. 
Under the identification of presheaves on $\mathcal{A}$
and lower sets of $\mathcal{A}$,
the Yoneda embedding turns out to be the well-known
\textit{principal ideal} operation 
$\mathord{\downarrow}(-)$:
\begin{align*}
\ob{Y(b)}^{-1}\big(\{\true\}\big)&=\mathord{\downarrow}(b)\\
&=\{a\in\ob{\mathcal{A}}\mid a\preceq_\mathcal{A} b \}.
\end{align*}
Dually, the co-Yoneda embedding corresponds to the
\textit{principal filter} operation $\mathord{\uparrow}(-)$:
\begin{align*}
\ob{\overline{Y}(a)}^{-1}\big(\{\true\}\big)&=\mathord{\uparrow} (a)\\
&=\{b\in\ob{\mathcal{A}}\mid a\preceq_\mathcal{A} b \}.
\end{align*}

Therefore, the Yoneda embedding theorem specializes to
the statement that 
\textit{every preordered set embeds into the poset of 
its lower subsets (ordered by the inclusion) and 
the poset of its upper subsets (ordered by the 
opposite of the inclusion)}:
\begin{align*}
b\preceq_\mathcal{A} b^\prime 
&\iff \mathord{\downarrow}(b)\subseteq\mathord{\downarrow} (b^\prime);\\
a\preceq_\mathcal{A} a^\prime 
&\iff \mathord{\uparrow}(a)\supseteq\mathord{\uparrow} (a^\prime).
\end{align*}
But note that such \textit{embeddings} are not 
injective unless the preorder is an order.
\end{ex}

\begin{ex} 
Let $\mathcal{A}=(\ob{\mathcal{A}},d_\mathcal{A})$ 
be a $\overline{\mathbb{K}}_+$-category. 
The Yoneda embedding $Y$ sends a point $b$ of 
$\mathcal{A}$ to the ``distance to $b$'' function:
\[
	\ob{Y(b)}=\lambda a\in \ob{\mathcal{A}^\op}.\,d_\mathcal{A}(a,b).
\]
The co-Yoneda embedding $\overline{Y}$ sends a 
point $a$ of $\mathcal{A}$ to the ``distance from $a$''
function:
\[
	\ob{\overline{Y}(a)}=\lambda b\in \ob{\mathcal{A}}.\,d_\mathcal{A}(a,b).
\]

The Yoneda embedding theorem reads:
\begin{align*}
d_\mathcal{A}(b,b^\prime)
&=d_\funct{\mathcal{A}^\op}{\overline{\mathbb{K}}_+}
(Y(b),Y(b^\prime));\\
d_\mathcal{A}(a,a^\prime)
&=d_{\funct{\mathcal{A}}{\overline{\mathbb{K}}_+}^\op}
(\overline{Y}(a),\overline{Y}(a^\prime)).
\end{align*}
If $\mathcal{A}$ is a classical metric space, 
then by symmetry $Y$ and $\overline{Y}$ coincide and 
turn out to be the so-called 
\textit{Fr\'{e}chet embedding}.
\end{ex}

\begin{ex} 
Let $\mathcal{A}=(\ob{\mathcal{A}},d_\mathcal{A})$ 
be a $\overline{\mathbb{K}}$-category. We have
\[
	\ob{Y(b)}=\lambda a\in \ob{\mathcal{A}^\op}.\,d_\mathcal{A}(a,b),\quad
	\ob{\overline{Y}(a)}=\lambda b\in \ob{\mathcal{A}}.\,d_\mathcal{A}(a,b)
\]
and 
\[
d_\mathcal{A}(b,b^\prime)
=d_\funct{\mathcal{A}^\op}{\overline{\mathbb{K}}}
(Y(b),Y(b^\prime)),\quad
d_\mathcal{A}(a,a^\prime)
=d_{\funct{\mathcal{A}}{\overline{\mathbb{K}}}^\op}
(\overline{Y}(a),\overline{Y}(a^\prime)).
\]
\end{ex}

\begin{ex} 
Let $\mathcal{A}=(\ob{\mathcal{A}},d_\mathcal{A})$ 
be a $\overline{\mathbb{K}}_+^\text{Cart}$-category. 
We have
\[
	\ob{Y(b)}=\lambda a\in \ob{\mathcal{A}^\op}.\,d_\mathcal{A}(a,b),\quad
	\ob{\overline{Y}(a)}=\lambda b\in \ob{\mathcal{A}}.\,d_\mathcal{A}(a,b)
\]
and 
\[
d_\mathcal{A}(b,b^\prime)
=d_\funct{\mathcal{A}^\op}{\overline{\mathbb{K}}_+^\text{Cart}}
(Y(b),Y(b^\prime)),\quad
d_\mathcal{A}(a,a^\prime)
=d_{\funct{\mathcal{A}}{\overline{\mathbb{K}}_+^\text{Cart}}^\op}
(\overline{Y}(a),\overline{Y}(a^\prime)).
\]
\end{ex}

\chapter[$\overline{\mathbb{K}}$-Categories and $\overline{\mathbb{K}}$-Extended L-Convex Sets]{$\overline{\mathbb{K}}$-Categories and \newline $\overline{\mathbb{K}}$-Extended L-Convex Sets}
We are now ready to show the main result, 
a correspondence between 
$\overline{\mathbb{K}}$-categories and 
$\overline{\mathbb{K}}$-extended L-convex sets,
which are variants of L-convex sets or L-convex 
polyhedra of discrete convex analysis.
A chief difference between $\overline{\mathbb{K}}$-extended 
L-convex sets and L-convex sets (corresponds to the case 
$\mathbb{K}=\mathbb{Z}$) or L-convex polyhedra
($\mathbb{K}=\mathbb{R}$) is that the former have 
the ambient sets of the form $\overline{\mathbb{K}}^V$  
whereas the latter two $\mathbb{Z}^V$ or $\mathbb{R}^V$
($V$ is some set).
In this chapter, we introduce 
$\overline{\mathbb{K}}$-extended L-convex sets and
homomorphisms between them, and exploit these to
show a duality theorem between 
$\overline{\mathbb{K}}$-categories and 
$\overline{\mathbb{K}}$-extended L-convex sets.
The theorem rests on the fact that the set
$\mathbb{K}\cup\{-\infty,\infty\}$ can canonically be
seen both as
a $\overline{\mathbb{K}}$-category and as
a $\overline{\mathbb{K}}$-extended L-convex set,  
and has two faces.
Using this fact, both directions of the constructions
in the duality 
(namely, from
$\overline{\mathbb{K}}$-categories to  
$\overline{\mathbb{K}}$-extended L-convex sets,
and conversely) 
are realized as that of forming the function spaces 
with codomain $\overline{\mathbb{K}}$ (either as a 
$\overline{\mathbb{K}}$-category or as a 
$\overline{\mathbb{K}}$-extended L-convex set).
As a byproduct, we remark that (the underlying sets of)
$\overline{\mathbb{K}}$-extended L-convex sets,
seen as $\overline{\mathbb{K}}$-categories with 
distance functions the restrictions of that for
$\overline{\mathbb{K}}^V$ with sup-distances,
are nothing but $\overline{\mathbb{K}}$-presheaf 
categories.
This result gives rise to a claim that (variants of)
\textit{L-convex sets arise canonically 
in the enriched-categorical setting},
suggesting the possibility of a categorical approach
to discrete convex analysis.

\section{$\overline{\mathbb{K}}$-Extended L-Convex Sets}
We begin with the definition of 
$\overline{\mathbb{K}}$-extended L-convex sets
(recall that $\mathbb{K}$ denotes either $\mathbb{Z}$ 
or $\mathbb{R}$).

\begin{defn} 
A $\overline{\mathbb{K}}$\textbf{-extended L-convex set}
$\lcs{D}$ is a pair 
$(\ind{\lcs{D}},\lcs{D}_0)$ where
\begin{itemize}
\item 
$\ind{\lcs{D}}$ is a set called the \textbf{index set} 
of $\lcs{D}$;
\item
$\lcs{D}_0$ is a subset of 
$\overline{\mathbb{K}}^\ind{\lcs{D}}$
called the \textbf{underlying set} of $\lcs{D}$;
\end{itemize}
such that the following axioms hold:
\begin{description}[font=\normalfont]
	\item[$\qquad$(Order completeness)]
	$\bigvee_{p \in S} p,\ 
	\bigwedge_{p \in S} p\in \lcs{D}_0
	\quad(\forall S\subseteq \lcs{D}_0)$;
	\item[$\qquad$(Weight completeness)]
	$p+\alpha \cdot\textbf{1}\in \lcs{D}_0\quad 
	(\forall p\in \lcs{D}_0,\ \forall \alpha \in \mathbb{K}\cup\{-\infty,\infty\})$.
\end{description}
\end{defn}
Let us clarify the notations.
$\overline{\mathbb{K}}^\ind{\lcs{D}}$ denotes the 
set of all maps from the set $\ind{\lcs{D}}$ to 
$\mathbb{K}\cup\{-\infty,\infty\}$.
The supremum $\bigvee$ and the infimum $\bigwedge$ in 
$\overline{\mathbb{K}}^\ind{\lcs{D}}$ are
taken coordinate-wise:
\[
\big(\bigvee_{p\in S}p\big)(v)=\bigvee_{p\in S} \big(p(v)\big),\quad
\big(\bigwedge_{p\in S}p\big)(v)=\bigwedge_{p\in S} \big(p(v)\big)\quad
(\forall v\in \ind{\lcs{D}}),
\]
where the supremum and the infimum in the right 
hand sides 
are taken in the set $\mathbb{K}\cup\{-\infty,\infty\}$ 
using the order $\geq$ (recall that since 
$\geq$ is the opposite of the usual ordering $\leq$, 
the supremum $\bigvee$ here corresponds
to the usual $\inf$, and the infimum $\bigwedge$ 
the usual $\sup$; we continue to use our previous notational 
conventions $\bigvee=\inf$, $\bigwedge=\sup$).
$\alpha\cdot \textbf{1}$ represents an element of 
$\overline{\mathbb{K}}^\ind{\lcs{D}}$ which is the
``all-$\alpha$ vector'' or the constant map with the unique
value $\alpha$:
\[
(\alpha \cdot\textbf{1})(v)=\alpha\quad (\forall v\in \ind{\lcs{D}}).
\]
The operation $+$  on 
$\overline{\mathbb{K}}^\ind{\lcs{D}}$
is defined coordinate-wise using the
extended addition of Example~\ref{ex:overlineK}:
\[
(p+q)(v)=p(v)+q(v)\quad (\forall v\in \ind{\lcs{D}}).
\]
One can define the operation $-$ on
$\overline{\mathbb{K}}^\ind{\lcs{D}}$
in the similar way.
The name ``weight completeness'' has its origin in
the fact that together with the order completeness, 
it assures the existence of all \textit{weighted limits}
and \textit{weighted colimits}, which are fundamental 
notions of enriched category theory.

First we check that the $-$ version of the weight 
completeness condition comes for free. 
\begin{prop}
Let $\lcs{D}=(\ind{\lcs{D}},\lcs{D}_0)$ be a
$\overline{\mathbb{K}}$-extended L-convex set.
Then the following hold:
\begin{align}\label{eq:weightminus}
	p-\alpha \cdot\mathbf{1}\in \lcs{D}_0\quad 
	(\forall p\in \lcs{D}_0,\ \forall \alpha \in \mathbb{K}\cup\{-\infty,\infty\}).
\end{align}
\end{prop}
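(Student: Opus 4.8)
The plan is to fix $p \in \lcs{D}_0$ and $\alpha \in \mathbb{K}\cup\{-\infty,\infty\}$ and split into three cases according to whether $\alpha$ is finite, equals $\infty$, or equals $-\infty$. In each case I will exhibit $p-\alpha\cdot\mathbf{1}$ as an element that is already guaranteed to lie in $\lcs{D}_0$, either directly by weight completeness or as a supremum/infimum of elements of $\lcs{D}_0$ via order completeness. Every asserted equality is to be verified coordinate-wise, reading the extended addition and subtraction off the tables of Example~\ref{ex:overlineK}; throughout I keep the convention $\bigvee=\inf$, $\bigwedge=\sup$ in mind so that the infinite entries are handled with the correct sign.

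For $\alpha \in \mathbb{K}$, I would first check coordinate-wise that $y-\alpha = y+(-\alpha)$ holds in $\overline{\mathbb{K}}$ for every $y\in\mathbb{K}\cup\{-\infty,\infty\}$; the only non-routine entries are $y=\pm\infty$, and both are immediate from the tables. Hence $p-\alpha\cdot\mathbf{1}=p+(-\alpha)\cdot\mathbf{1}$, and since $-\alpha\in\mathbb{K}$ this lies in $\lcs{D}_0$ at once by weight completeness. For $\alpha=\infty$, the table gives $y-\infty=-\infty$ for every $y$ (in particular $\infty-\infty=-\infty$), so $p-\infty\cdot\mathbf{1}$ is the constant map $(-\infty)\cdot\mathbf{1}$. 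This is exactly $\bigwedge_{q\in\emptyset}q$, the infimum of the empty subset of $\lcs{D}_0$, which coordinate-wise is $\sup\emptyset=-\infty$; hence it lies in $\lcs{D}_0$ by order completeness. (It is worth recording simultaneously that $\infty\cdot\mathbf{1}=\bigvee_{q\in\emptyset}q\in\lcs{D}_0$.)

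The case $\alpha=-\infty$ is the one I expect to be the real obstacle, because the naive guess $p-(-\infty)\cdot\mathbf{1}=p+\infty\cdot\mathbf{1}$ is \emph{false}: the right side is the constant $\infty$, whereas the left side equals $-\infty$ at the coordinates where $p$ takes the value $-\infty$ (there $(-\infty)-(-\infty)=-\infty$) and $\infty$ elsewhere (since $t-(-\infty)=\infty$ and $\infty-(-\infty)=\infty$). Instead I claim the representation
\[
	p-(-\infty)\cdot\mathbf{1}=\bigwedge_{\beta\in\mathbb{K}}\big(p+\beta\cdot\mathbf{1}\big),
\]
which I verify coordinate-wise: at a coordinate $v$ the right side is $\sup_{\beta\in\mathbb{K}}\{p(v)+\beta\}$, which is $\infty$ when $p(v)$ is finite or $\infty$ and is $-\infty$ when $p(v)=-\infty$, matching the subtraction table exactly. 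Each translate $p+\beta\cdot\mathbf{1}$ with $\beta\in\mathbb{K}$ lies in $\lcs{D}_0$ by weight completeness, so $\{p+\beta\cdot\mathbf{1}\mid\beta\in\mathbb{K}\}\subseteq\lcs{D}_0$, and its infimum lies in $\lcs{D}_0$ by order completeness.

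The only genuinely delicate points are thus the coordinate-wise arithmetic with the infinite values (chiefly $\infty-\infty=-\infty$ and $t-(-\infty)=\infty$) and, in the last case, the recognition that the $-\infty$ translate must be presented as a supremum of finite translates rather than as a single $+$ translate. Once these identities are in place, the conclusion follows entirely from order completeness and weight completeness, with no further structure needed.
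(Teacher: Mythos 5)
Your proof is correct and takes essentially the same route as the paper's: the finite case via $p-\alpha\cdot\mathbf{1}=p+(-\alpha)\cdot\mathbf{1}$ and weight completeness, the case $\alpha=\infty$ as the empty infimum $\bigwedge\emptyset=\sup\emptyset$, and the case $\alpha=-\infty$ as $\bigwedge_{\beta\in\mathbb{K}}\big(p+\beta\cdot\mathbf{1}\big)$, invoking order completeness. Your added coordinate-wise checks (including the warning that $p-(-\infty)\cdot\mathbf{1}\neq p+\infty\cdot\mathbf{1}$) only make explicit what the paper leaves implicit.
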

\begin{proof}
When $\alpha \in \mathbb{K}$, then clearly 
\[
p-\alpha \cdot\mathbf{1}=p+(-\alpha)\cdot \mathbf{1}.
\]
If $\alpha =-\infty$, then 
\[
(p-(-\infty)\cdot\mathbf{1})(v)=
\begin{cases}
	-\infty &(p(v)=-\infty)\\
	\infty &(\text{otherwise})
\end{cases}
\quad (\forall v\in \ind{\lcs{D}}),
\]
and this is realized as 
\[
\bigwedge_{\alpha\in\mathbb{K}}(p+\alpha\cdot\mathbf{1})=\sup_{\alpha\in\mathbb{K}}\{p+\alpha\cdot\mathbf{1}\}.
\]
Finally, if $\alpha =\infty$, then 
\[
(p-\infty\cdot\mathbf{1})(v)=-\infty\quad (\forall v\in \ind{\lcs{D}})
\]
and this is written as 
\[
\bigwedge \emptyset=\sup\emptyset.
\]
Hence, in either case 
$(p-\alpha\cdot\mathbf{1})\in\lcs{D}_0$ by the order and 
weight completeness of $\lcs{D}$.
\end{proof}

In what follows, we refer to the condition
(\ref{eq:weightminus}) also as the weight 
completeness of $\lcs{D}$.
The definition of $\overline{\mathbb{K}}$-extended
L-convex sets slightly differs from that of L-convex
sets (when $\mathbb{K}=\mathbb{Z}$) or
L-convex polyhedra (when $\mathbb{K}=\mathbb{R}$).
For comparison, below we present the definitions of
them.

\begin{defn}[\cite{Mur03}] 
Let $V$ be a \textit{finite} set.
A subset $D\subseteq \mathbb{Z}^V$ is an
\textbf{L-convex set} if the following hold:
\begin{description}[font=\normalfont]
	\item[$\qquad$(Nonemptiness)]
	$D\neq \emptyset$;
	\item[$\qquad$(SBS\mbox{$[\mathbb{Z}]$})]
	$p\land q,\ p\lor q\in D
	\quad(\forall p,q\in D)$;
	\item[$\qquad$(TRS\mbox{$[\mathbb{Z}]$})]
	$p+\alpha \cdot\textbf{1}\in D\quad (\forall p\in D,\ \forall \alpha \in \mathbb{Z})$.
\end{description}
\end{defn}

\begin{defn}[\cite{Mur03}] 
Let $V$ be a \textit{finite} set.
A subset $D\subseteq \mathbb{R}^V$ is an
\textbf{L-convex polyhedron} if the following hold:
\begin{description}[font=\normalfont]
	\item[$\qquad$(Nonemptiness)]
	$D\neq \emptyset$;
	\item[$\qquad$(Closedness)]
	$D$ is a closed subset of $\mathbb{R}^V$;
	\item[$\qquad$(SBS\mbox{$[\mathbb{R}]$})]
	$p\land q,\ p\lor q\in D
	\quad(\forall p,q\in D)$;
	\item[$\qquad$(TRS\mbox{$[\mathbb{R}]$})]
	$p+\alpha \cdot\textbf{1}\in D\quad (\forall p\in D,\ \forall \alpha \in \mathbb{R})$.
\end{description}
\end{defn}

We note that the order completeness condition 
entails the nonemptiness condition;
just take $S=\emptyset$.
Since we impose on $\overline{\mathbb{K}}$-extended 
L-convex sets the 
order completeness condition stronger than
its binary versions 
(SBS[$\mathbb{Z}$] or SBS[$\mathbb{R}$]),
the index sets are no longer limited to be finite.
However, we adopt the order completeness 
condition not just because we can treat infinite
index sets; for our purpose, the binary versions 
do not work even when $\ind{\lcs{D}}$ is finite.

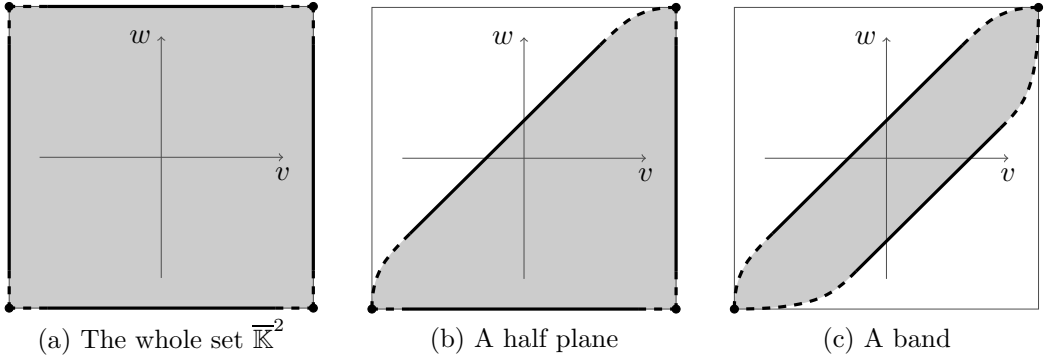
\begin{figure}
\centering
\begin{subfigure}{0.32\textwidth}
\centering
\begin{tikzpicture}
\useasboundingbox (-2,-1.935) rectangle (2,2);
\draw [my grid] (-2,-2) rectangle (2,2);
\draw [my grid,my coordarrow] (-1.6,0) -- (1.6,0) node[anchor=north,my cood] {$v$};
\draw [my grid,my coordarrow] (0,-1.6) -- (0,1.6) node[anchor=east,my cood] {$w$};
\draw [my frame,dashed] 
      (-1.5,-2) -- (-2,-2);
\draw [my frame]
      (-1.5,-2) -- (1.5,-2);
\draw [my frame,dashed]
      (1.5,-2) -- (2,-2); %
\draw [my frame,dashed] 
      (2,-1.5) -- (2,-2);
\draw [my frame]
      (2,-1.5) -- (2,1.5);
\draw [my frame,dashed]
      (2,1.5) -- (2,2); %
\draw [my frame,dashed] 
      (-1.5,2) -- (-2,2);
\draw [my frame]
      (-1.5,2) -- (1.5,2);
\draw [my frame,dashed]
      (1.5,2) -- (2,2); %
\draw [my frame,dashed] 
      (-2,-1.5) -- (-2,-2);
\draw [my frame]
      (-2,-1.5) -- (-2,1.5);
\draw [my frame,dashed]
      (-2,1.5) -- (-2,2); %
\begin{scope}[on background layer]
\fill [black!20!white] 
      (-2,-2) -- (2,-2) -- (2,2) -- (-2,2) -- cycle;
\end{scope}
\filldraw [black] (-2,-2) circle (\csize)
          (2,2) circle (\csize)
          (2,-2) circle (\csize)
          (-2,2) circle (\csize);
\end{tikzpicture}
\caption{The whole set $\overline{\mathbb{K}}^2$}
\label{fig:a}
\end{subfigure}
\begin{subfigure}{0.32\textwidth}
\centering
\begin{tikzpicture}
\useasboundingbox (-2,-2) rectangle (2,2);
\draw [my grid] (-2,-2) rectangle (2,2);
\draw [my grid,my coordarrow] (-1.6,0) -- (1.6,0) node[anchor=north,my cood] {$v$};
\draw [my grid,my coordarrow] (0,-1.6) -- (0,1.6) node[anchor=east,my cood] {$w$};
\newcommand{\Da}{1.0}
\draw [my frame,dashed] (-1.5,-\Da) .. 
      controls (-1.8,-\Da-0.3) 
      and (-2,-\Da-0.5) .. 
      (-2,-2);
\draw [my frame] (-1.5,-\Da) -- (\Da,1.5);
\draw [my frame,dashed] (\Da,1.5) .. 
      controls (\Da+0.3,1.8) 
      and (\Da+0.5,2) .. 
      (2,2);
\draw [my frame,dashed] 
      (-1.5,-2) -- (-2,-2);
\draw [my frame]
      (-1.5,-2) -- (1.5,-2);
\draw [my frame,dashed]
      (1.5,-2) -- (2,-2);
\draw [my frame,dashed] 
      (2,-1.5) -- (2,-2);
\draw [my frame]
      (2,-1.5) -- (2,1.5);
\draw [my frame,dashed]
      (2,1.5) -- (2,2);
\begin{scope}[on background layer]
\fill [black!20!white] 
      (-2,-2) ..controls(-2,-\Da-0.5)and(-1.8,-\Da-0.3).. 
      (-1.5,-\Da) -- 
      (\Da,1.5) ..controls(\Da+0.3,1.8)and(\Da+0.5,2).. 
      (2,2) --
      (2,-2) --
      (-2,-2);
\end{scope}
\filldraw [black] (-2,-2) circle (\csize)
          (2,2) circle (\csize)
          (2,-2) circle (\csize);
\end{tikzpicture}
\caption{A half plane}
\label{fig:b}
\end{subfigure}
\begin{subfigure}{0.32\textwidth}
\centering
\begin{tikzpicture}
\useasboundingbox (-2,-2) rectangle (2,2);
\draw [my grid] (-2,-2) rectangle (2,2);
\draw [my grid,my coordarrow] (-1.6,0) -- (1.6,0) node[anchor=north,my cood] {$v$};
\draw [my grid,my coordarrow] (0,-1.6) -- (0,1.6) node[anchor=east,my cood] {$w$};
\newcommand{\Da}{1.0}
\draw [my frame,dashed] (-1.5,-\Da) .. 
      controls (-1.8,-\Da-0.3) 
      and (-2,-\Da-0.5) .. 
      (-2,-2);
\draw [my frame] (-1.5,-\Da) -- (\Da,1.5);
\draw [my frame,dashed] (\Da,1.5) .. 
      controls (\Da+0.3,1.8) 
      and (\Da+0.5,2) .. 
      (2,2);
\newcommand{\Ea}{0.4}
\draw [my frame,dashed] (-\Ea,-1.5) ..
      controls (-\Ea-0.3,-1.8)
      and (-\Ea-0.5,-2) ..
      (-2,-2);
\draw [my frame] (-\Ea,-1.5) -- (1.5,\Ea);
\draw [my frame,dashed] (1.5,\Ea) ..
      controls (1.8,\Ea+0.3)
      and (2,\Ea+0.5) ..
      (2,2);
\begin{scope}[on background layer]
\fill [black!20!white] 
      (-2,-2) ..controls(-2,-\Da-0.5)and(-1.8,-\Da-0.3).. 
      (-1.5,-\Da) -- 
      (\Da,1.5) ..controls(\Da+0.3,1.8)and(\Da+0.5,2).. 
      (2,2) ..controls(2,\Ea+0.5)and(1.8,\Ea+0.3)..
      (1.5,\Ea) --
      (-\Ea,-1.5) ..controls(-\Ea-0.3,-1.8)and(-\Ea-0.5,-2)..
      (-2,-2);
\end{scope}
\filldraw [black] (-2,-2) circle (\csize)
          (2,2) circle (\csize);
\end{tikzpicture}
\caption{A band}
\label{fig:c}
\end{subfigure}
\caption{The underlying sets of  $\overline{\mathbb{K}}$-extended L-convex sets}
\label{fig:lcs}
\end{figure}

Let us illustrate the point by observing examples
of $\overline{\mathbb{K}}$-extended L-convex sets
and a non-example we wish to exclude.
Fig.~\ref{fig:lcs} presents examples of 
(the underlying sets of)
typical $\overline{\mathbb{K}}$-extended 
L-convex sets with two-element index sets.
We first explain the intended meaning of the figures
using Fig.~\ref{fig:c};
let $\lcs{D}$ be the $\overline{\mathbb{K}}$-extended 
L-convex set drawn on it.
Since the ambient set of $\lcs{D}_0$ is  
$\overline{\mathbb{K}}^\ind{\lcs{D}}\cong \overline{\mathbb{K}}^2$,
there are points whose coordinates include $-\infty$
or $\infty$;
the outer frame of the figure represents these points.
The subset $\lcs{D}_0$ is represented by the gray region
(when $\mathbb{K}=\mathbb{Z}$, the region 
should be interpreted as a set of lattice points).
The boundaries of $\lcs{D}_0$ is two
``lines of gradient 1''; 
although they are straight, 
they pass the points $(-\infty,-\infty)$ 
(at the lower left) and $(\infty,\infty)$
(the upper right), and to indicate this fact
we bend the lines in the margin, where
they are also dashed. 
We place two black dots at the lower left and the upper 
right to indicate 
$(-\infty,-\infty)\in \lcs{D}_0$ and 
$(\infty,\infty)\in \lcs{D}_0$, respectively.
In Fig.~\ref{fig:b}, parts of the frame, namely
the bottom and right lines, are drawn 
boldly.
This represents that points of the forms respectively
$(s,-\infty)$ and $(\infty,t)$ ($s,t\in\mathbb{K}$), are 
in the underlying set.
Later we show that $\overline{\mathbb{K}}$-extended 
L-convex sets with two-element index sets correspond to 
$\overline{\mathbb{K}}$-categories with two points,
and as an illustration present the correspondence 
at the level of shapes, with a more exhaustive 
enumeration.

An example of subsets of $\overline{\mathbb{K}}^2$
which we want to exclude from the underlying sets of 
$\overline{\mathbb{K}}$-extended L-convex sets
with two-element index sets is drawn in
Fig.~\ref{fig:nonlcs}.
This set satisfies the weight completeness condition and 
the binary version of the order completeness condition.
These weaker conditions fail to assure e.g.,
if $(0,0),(1,0),(2,0),\dots $ are in the set 
then so is $(\infty,0)$, and for such entities we cannot
establish a nice connection with
$\overline{\mathbb{K}}$-categories. 

\begin{figure}
\centering
\begin{tikzpicture}
\useasboundingbox (-2,-2) rectangle (2,2);
\draw [my grid] (-2,-2) rectangle (2,2);
\draw [my grid,my coordarrow] (-1.6,0) -- (1.6,0) node[anchor=north,my cood] {$v$};
\draw [my grid,my coordarrow] (0,-1.6) -- (0,1.6) node[anchor=east,my cood] {$w$};
\newcommand{\Da}{1.0}
\draw [my frame,dashed] (-1.5,-\Da) .. 
      controls (-1.8,-\Da-0.3) 
      and (-2,-\Da-0.5) .. 
      (-2,-2);
\draw [my frame] (-1.5,-\Da) -- (\Da,1.5);
\draw [my frame,dashed] (\Da,1.5) .. 
      controls (\Da+0.3,1.8) 
      and (\Da+0.5,2) .. 
      (2,2);
\begin{scope}[on background layer]
\fill [black!20!white] 
      (-2,-2) ..controls(-2,-\Da-0.5)and(-1.8,-\Da-0.3).. 
      (-1.5,-\Da) -- 
      (\Da,1.5) ..controls(\Da+0.3,1.8)and(\Da+0.5,2).. 
      (2,2) --
      (2,-2) --
      (-2,-2);
\draw [white,very thick] (-2,-2) -- (2,-2) -- (2,2);
\end{scope}
\filldraw [black] (-2,-2) circle (\csize)
          (2,2) circle (\csize);
\end{tikzpicture}
\caption{Fig.~\ref{fig:b} without the bottom and right lines}
\label{fig:nonlcs}
\end{figure}
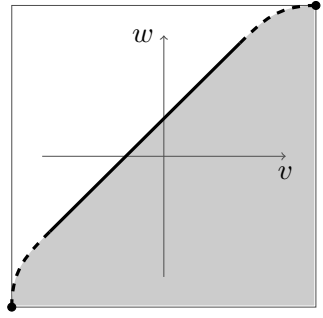

Another, more concrete example of 
$\overline{\mathbb{K}}$-extended
L-convex sets is the following.
It will play a crucial role in the formulation of 
the duality theorems.

\begin{ex}
The pair 
$\overline{\mathbb{K}}=(\{\ast\},\mathbb{K}\cup\{-\infty,\infty\})$, 
where $\{\ast\}$ is a singleton and  
$\mathbb{K}\cup\{-\infty,\infty\}$ is a 
(non-proper) subset of 
$\overline{\mathbb{K}}^{\{\ast\}}$, is clearly a 
$\overline{\mathbb{K}}$-extended L-convex set.
\end{ex}

\section{Homomorphisms of $\overline{\mathbb{K}}$-Extended L-Convex Sets}
\begin{defn}
Let $\lcs{D}=(\ind{\lcs{D}},\lcs{D}_0)$
and $\lcs{E}=(\ind{\lcs{E}},\lcs{E}_0)$ be 
$\overline{\mathbb{K}}$-extended L-convex sets.
A \textbf{homomorphism}
\[
\Phi\colon \lcs{D}\longrightarrow \lcs{E}
\] 
from $\lcs{D}$ to $\lcs{E}$ is a map
\[
\ind{\Phi}\colon \ind{\lcs{E}}\longrightarrow \ind{\lcs{D}}
\]
from the index set of $\lcs{E}$ to 
that of $\lcs{D}$, such that
for all $p\in \lcs{D}_0$, the element 
$\Phi_0(p)\in \overline{\mathbb{K}}^\ind{\lcs{E}}$ 
given by
\[
\Phi_0(p)=p\circ \ind{\Phi}\colon \ind{\lcs{E}}\overset{\ind{\Phi}}{\longrightarrow} \ind{\lcs{D}}\overset{p}{\longrightarrow} \mathbb{K}\cup \{-\infty,\infty\}
\]
is an element of $\lcs{E}_0$.

$\lcs{D}$ is called the \textbf{domain} of $\Phi$ and
$\lcs{E}$ the \textbf{codomain} of $\Phi$. 
We denote the set of all homomorphisms
from $\lcs{D}$ to $\lcs{E}$ by 
$\db{\lcs{D},\lcs{E}}_0$.
\end{defn} 

The condition for a map 
$\ind{\Phi}\colon \ind{\lcs{E}}\longrightarrow \ind{\lcs{D}}$
to define a homomorphism of 
$\overline{\mathbb{K}}$-extended
L-convex sets assures that we can always construct
a map (called the \textbf{underlying map} of $\Phi$)
\[
\Phi_0\colon \lcs{D}_0\longrightarrow \lcs{E}_0
\]
from the underlying set of $\lcs{D}$ to that 
of $\lcs{E}$ 
(the same direction as the homomorphism $\Phi$).
Therefore one can intuitively think of a homomorphism
as \textit{a map between the underlying sets
with a special property},
namely that all it does is to relabel the indices
in some fixed manner (determined by $\ind{\Phi}$). 
However, note that the \textit{equality} between 
homomorphisms 
$\Phi,\Psi\colon \lcs{D}\longrightarrow\lcs{E}$ 
are defined as that between the maps 
$\ind{\Phi},\ind{\Psi}\colon \ind{\lcs{E}}\longrightarrow \ind{\lcs{D}}$
of the index sets 
and not just between the underlying maps
$\Phi_0,\Psi_0\colon \lcs{D}_0\longrightarrow \lcs{E}_0$
(there do exist different homomorphisms with a 
common underlying map).
Fig.~\ref{fig:lcshom} illustrates an example of 
homomorphisms between $\overline{\mathbb{K}}$-extended
L-convex sets with two-element index sets.
In this example, $\Phi$ is given by 
$\ind{\Phi}\colon\{w,w^\prime\}\longrightarrow\{v,v^\prime\}$
with $\ind{\Phi}(w)=v$ and 
$\ind{\Phi}(w^\prime)=v^\prime$.
Correspondingly, we find that 
$\Phi_0\colon\lcs{D}_0\longrightarrow\lcs{E}_0$
embeds the ``thin band'' $\lcs{D}_0$ into 
the ``thick band'' $\lcs{E}_0$ 
(the dark region in $\lcs{E}_0$ represents the image of 
$\lcs{D}_0$ through the map $\Phi_0$).
Observe that unlike $\ind{\Phi}$,
$\ind{\Phi}^{-1}\colon\{v,v^\prime\}\longrightarrow\{w,w^\prime\}$
does not define a homomorphism, 
because $\lcs{E}_0$ does not fit in $\lcs{D}_0$.

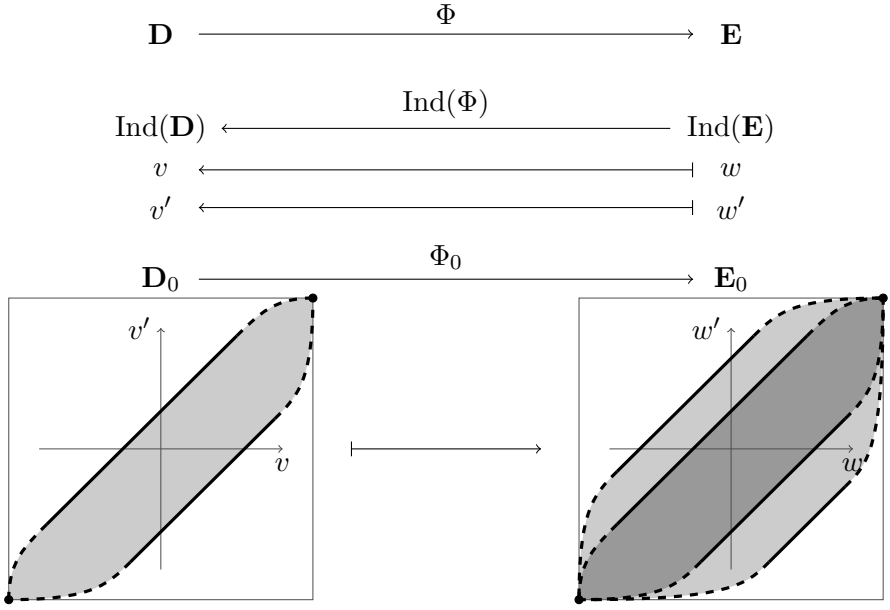
\begin{figure}
\centering
\begin{tikzpicture}
\node at (0,5.5) {$\lcs{D}$};
\node at (7.5,5.5) {$\lcs{E}$};
\draw [my arrow] (0.5,5.5) -- (7,5.5);
\node at (3.75,5.5)[anchor=south] {$\Phi$};
\node at (0,4.25) {$\ind{\lcs{D}}$};
\node at (7.5,4.25) {$\ind{\lcs{E}}$};
\node at (0,3.7) {$v$};
\node at (7.5,3.7) {$w$};
\node at (0,3.2) {$v^\prime$};
\node at (7.5,3.2) {$w^\prime$};
\draw [my mapsto] (7,3.7) -- (0.5,3.7);
\draw [my mapsto] (7,3.2) -- (0.5,3.2);
\draw [my arrow] (6.7,4.25) -- (0.8,4.25);
\node at (3.75,4.25) [anchor=south]{$\ind{\Phi}$};
\draw [my mapsto] (2.5,0) -- (5,0);
\draw [my arrow] (0.5,2.25) -- (7,2.25);
\node at (3.75,2.25)[anchor=south] {$\Phi_0$};
\node at (0,2.25) {$\lcs{D}_0$};
\node at (7.5,2.25) {$\lcs{E}_0$};

\begin{scope} 
\draw [my grid] (-2,-2) rectangle (2,2);
\draw [my grid,my coordarrow] (-1.6,0) -- (1.6,0) node[anchor=north,my cood] {$v$};
\draw [my grid,my coordarrow] (0,-1.6) -- (0,1.6) node[anchor=east,my cood] {$v^\prime$};
\newcommand{\Da}{1.0}
\draw [my frame,dashed] (-1.5,-\Da) .. 
      controls (-1.8,-\Da-0.3)
      and (-2,-\Da-0.5) .. 
      (-2,-2);
\draw [my frame] (-1.5,-\Da) -- (\Da,1.5);
\draw [my frame,dashed] (\Da,1.5) .. 
      controls (\Da+0.3,1.8) 
      and (\Da+0.5,2) .. 
      (2,2);
\newcommand{\Ea}{0.4}
\draw [my frame,dashed] (-\Ea,-1.5) ..
      controls (-\Ea-0.3,-1.8)
      and (-\Ea-0.5,-2) ..
      (-2,-2);
\draw [my frame] (-\Ea,-1.5) -- (1.5,\Ea);
\draw [my frame,dashed] (1.5,\Ea) ..
      controls (1.8,\Ea+0.3)
      and (2,\Ea+0.5) ..
      (2,2);
\begin{scope}[on background layer]
\fill [black!20!white] 
      (-2,-2) ..controls(-2,-\Da-0.5)and(-1.8,-\Da-0.3).. 
      (-1.5,-\Da) -- 
      (\Da,1.5) ..controls(\Da+0.3,1.8)and(\Da+0.5,2).. 
      (2,2) ..controls(2,\Ea+0.5)and(1.8,\Ea+0.3)..
      (1.5,\Ea) --
      (-\Ea,-1.5) ..controls(-\Ea-0.3,-1.8)and(-\Ea-0.5,-2)..
      (-2,-2);
\end{scope}
\filldraw [black] (-2,-2) circle (\csize)
          (2,2) circle (\csize);
\end{scope}

\begin{scope}[shift={(7.5,0)}] 
\draw [my grid] (-2,-2) rectangle (2,2);
\draw [my grid,my coordarrow] (-1.6,0) -- (1.6,0) node[anchor=north,my cood] {$w$};
\draw [my grid,my coordarrow] (0,-1.6) -- (0,1.6) node[anchor=east,my cood] {$w^\prime$};
\newcommand{\Db}{0.3}
\draw [my frame,dashed] (-1.5,-\Db) .. 
      controls (-1.8,-\Db-0.3) 
      and (-2,-\Db-0.5) .. 
      (-2,-2);
\draw [my frame] (-1.5,-\Db) -- (\Db,1.5);
\draw [my frame,dashed] (\Db,1.5) .. 
      controls (\Db+0.3,1.8) 
      and (\Db+0.5,2) .. 
      (2,2);
\newcommand{\Eb}{-0.5}
\draw [my frame,dashed] (-\Eb,-1.5) ..
      controls (-\Eb-0.3,-1.8)
      and (-\Eb-0.5,-2) ..
      (-2,-2);
\draw [my frame] (-\Eb,-1.5) -- (1.5,\Eb);
\draw [my frame,dashed] (1.5,\Eb) ..
      controls (1.8,\Eb+0.3)
      and (2,\Eb+0.5) ..
      (2,2);
\begin{scope}[on background layer]
\fill [black!20!white] 
      (-2,-2) ..controls(-2,-\Db-0.5)and(-1.8,-\Db-0.3).. 
      (-1.5,-\Db) -- 
      (\Db,1.5) ..controls(\Db+0.3,1.8)and(\Db+0.5,2).. 
      (2,2) ..controls(2,\Eb+0.5)and(1.8,\Eb+0.3)..
      (1.5,\Eb) --
      (-\Eb,-1.5) ..controls(-\Eb-0.3,-1.8)and(-\Eb-0.5,-2)..
      (-2,-2);
\end{scope}

\newcommand{\Da}{1.0}
\draw [my frame,dashed] (-1.5,-\Da) .. 
      controls (-1.8,-\Da-0.3) 
      and (-2,-\Da-0.5) .. 
      (-2,-2);
\draw [my frame] (-1.5,-\Da) -- (\Da,1.5);
\draw [my frame,dashed] (\Da,1.5) .. 
      controls (\Da+0.3,1.8) 
      and (\Da+0.5,2) .. 
      (2,2);
\newcommand{\Ea}{0.4}
\draw [my frame,dashed] (-\Ea,-1.5) ..
      controls (-\Ea-0.3,-1.8)
      and (-\Ea-0.5,-2) ..
      (-2,-2);
\draw [my frame] (-\Ea,-1.5) -- (1.5,\Ea);
\draw [my frame,dashed] (1.5,\Ea) ..
      controls (1.8,\Ea+0.3)
      and (2,\Ea+0.5) ..
      (2,2);
\begin{scope}[on background layer]
\fill [black!40!white] 
      (-2,-2) ..controls(-2,-\Da-0.5)and(-1.8,-\Da-0.3).. 
      (-1.5,-\Da) -- 
      (\Da,1.5) ..controls(\Da+0.3,1.8)and(\Da+0.5,2).. 
      (2,2) ..controls(2,\Ea+0.5)and(1.8,\Ea+0.3)..
      (1.5,\Ea) --
      (-\Ea,-1.5) ..controls(-\Ea-0.3,-1.8)and(-\Ea-0.5,-2)..
      (-2,-2);
\end{scope}
\filldraw [black] (-2,-2) circle (\csize)
          (2,2) circle (\csize);
\end{scope}
\useasboundingbox (-2,-2) rectangle (9.5,5.5);
\end{tikzpicture}
\caption{A homomorphism between  $\overline{\mathbb{K}}$-extended L-convex sets}
\label{fig:lcshom}
\end{figure}

We note that homomorphisms include identity 
maps and are closed under composition (therefore,
$\overline{\mathbb{K}}$-extended L-convex sets 
together with their
homomorphisms, form a category). 
Using these facts, we define the notion of isomorphisms
between $\overline{\mathbb{K}}$-extended L-convex sets
as follows:
\begin{defn}
Let $\lcs{D}$ and $\lcs{E}$ be 
$\overline{\mathbb{K}}$-extended L-convex sets.
A homomorphism 
\[
\Phi\colon \lcs{D}\longrightarrow \lcs{E}
\] 
from $\lcs{D}$ to $\lcs{E}$
is an \textbf{isomorphism} 
if there exists a homomorphism 
(called its \textbf{inverse})
\[
\Psi\colon \lcs{E}\longrightarrow \lcs{D}
\]
from $\lcs{E}$ to $\lcs{D}$ such that
$\Psi\circ\Phi=1_{\lcs{D}}$ and 
$\Phi\circ\Psi=1_{\lcs{E}}$
hold, where $1_{\lcs{D}}$ and $1_{\lcs{E}}$ are the
identity homomorphisms on $\lcs{D}$ and $\lcs{E}$,
respectively.

If there is an isomorphism between 
$\lcs{D}$ and $\lcs{E}$, then they are said to be 
\textbf{isomorphic} and written as 
$\lcs{D}\cong\lcs{E}$.
\end{defn} 
In other words, 
$\Psi\colon \lcs{E}\longrightarrow \lcs{D}$ is the 
inverse of 
$\Phi\colon \lcs{D}\longrightarrow \lcs{E}$
if and only if the map
$\ind{\Psi}\colon \ind{\lcs{D}}\longrightarrow \ind{\lcs{E}}$ 
is the inverse of
$\ind{\Phi}\colon \ind{\lcs{E}}\longrightarrow \ind{\lcs{D}}$.

In Definition \ref{def:canord}
we defined the canonical ordering 
$\Rightarrow$ on the set $[\mathcal{A},\mathcal{B}]_0$
of all $\mathcal{V}$-functors from a 
$\mathcal{V}$-category $\mathcal{A}$ to $\mathcal{B}$,
hence also for the special case of 
$\mathcal{V}=\overline{\mathbb{K}}$.
Now we define a similar relation (also denoted by 
$\Rightarrow$) on the set 
$\db{\lcs{D},\lcs{E}}_0$ of homomorphisms.

\begin{defn}
Let $\lcs{D}=(\ind{\lcs{D}},\lcs{D}_0)$
and $\lcs{E}=(\ind{\lcs{E}},\lcs{E}_0)$ be 
$\overline{\mathbb{K}}$-extended L-convex sets.
The set $\db{\lcs{D},\lcs{E}}$ of all
homomorphisms from $\lcs{D}$ to $\lcs{E}$ admits
a canonical relation $\Rightarrow$ defined as follows:
\[
	\Phi\Rightarrow\Psi\iff\Phi_0(p)\geq\Psi_0(p)\quad(\forall p\in\lcs{D}_0),
\]
where $\Phi,\Psi\colon\lcs{D}\longrightarrow\lcs{E}$
are homomorphisms and 
$\geq$ is the coordinate-wise order on $\lcs{E}_0$.
We call $\Rightarrow$ the \textbf{canonical ordering}
on $\db{\lcs{D},\lcs{E}}_0$.
\end{defn}

Note that we can equip the underlying set $\lcs{D}_0$
with a distance function $d_{\lcs{D}_0}$ defined by
the restriction of the sup-distance
on $\overline{\mathbb{K}}^\ind{\lcs{D}}$.
Under these distances, one can show that 
the underlying map $\Phi_0$ of every homomorphism
$\Phi\colon\lcs{D}\longrightarrow\lcs{E}$ is
nonexpansive,
hence defines a $\overline{\mathbb{K}}$-functor.
Therefore, the notion of the canonical ordering
$\Rightarrow$ for $\overline{\mathbb{K}}$-functors
restricts to that for the underlying maps 
of homomorphisms.
The above definition coincides with 
the result of this approach.
Hence $\Rightarrow$ for homomorphisms is 
also a preorder, and is preserved under composition
(note that $(\Psi\circ\Phi)_0=\Psi_0\circ\Phi_0$).

\section{Duality Theorems}
In this section we show the main result of 
the thesis, a duality theorem between
$\overline{\mathbb{K}}$-categories and 
$\overline{\mathbb{K}}$-extended L-convex sets.
First we establish the duality at the level of 
individual objects;
the resulting Theorem \ref{mainthm_ob} states
that there is a one-to-one correspondence 
between the isomorphism class of 
$\overline{\mathbb{K}}$-categories and that of
$\overline{\mathbb{K}}$-extended L-convex sets.
In fact, a similar result for L-convex sets
and L-convex polyhedra is already known via almost 
parallel constructions in the
context of discrete convex analysis.
However, there is an essentially novel aspect 
(conceptually, rather than technically)
in our approach; we employ the notions of 
$\overline{\mathbb{K}}$-functors and homomorphisms of
$\overline{\mathbb{K}}$-extended L-convex sets in the
constructions of the duality, and 
realize them as the function space 
constructions 
with codomain $\overline{\mathbb{K}}$.

By introducing homomorphisms, we can compare these
``structure-preserving maps'' of 
$\overline{\mathbb{K}}$-extended L-convex sets with
$\overline{\mathbb{K}}$-functors,
the equally natural maps between 
$\overline{\mathbb{K}}$-categories.
We can describe the duality at the level of 
maps as well;
we obtain Theorem~\ref{mainthm_map} as a result.
In categorical terminology, we establish a
\textit{dual equivalence} between the category of 
$\overline{\mathbb{K}}$-categories and 
the category of $\overline{\mathbb{K}}$-extended 
L-convex sets.

Finally, we conclude the duality 
in Theorem \ref{mainthm_canord} by showing
that there is also a correspondence between
the canonical orderings $\Rightarrow$ on
$\overline{\mathbb{K}}$-functors and 
on homomorphisms, hence at the level of
canonical orderings.
In fact, the categories of 
$\overline{\mathbb{K}}$-categories and 
$\overline{\mathbb{K}}$-extended L-convex sets are
(strict) 2-categories 
(do not confuse with \textbf{2}-categories,
which are just preordered sets) 
by the canonical orderings $\Rightarrow$.
What we show is that our construction gives
a (strict) \textit{2-equivalence of 2-categories}.

\paragraph{}
If we denote the constructions for objects
by $[-,\overline{\mathbb{K}}]$
(from a $\overline{\mathbb{K}}$-category to a 
$\overline{\mathbb{K}}$-extended L-convex set)
and $\db{-,\overline{\mathbb{K}}}$
(from a $\overline{\mathbb{K}}$-extended L-convex set
to a $\overline{\mathbb{K}}$-category),
the duality at the object-level is expressed as 
follows:

\begin{thm}\label{mainthm_ob}
Let $\mathcal{A}=(\ob{\mathcal{A}},d_\mathcal{A})$ be
a $\overline{\mathbb{K}}$-category and
$\lcs{D}=(\ind{\lcs{D}},\lcs{D}_0)$
be a $\overline{\mathbb{K}}$-extended L-convex set.
Then the following hold:
\begin{enumerate}
\item
$\mathcal{A}\cong \db{[\mathcal{A},\overline{\mathbb{K}}],\overline{\mathbb{K}}}$.
\item
$\lcs{D}\cong [\db{\lcs{D},\overline{\mathbb{K}}},\overline{\mathbb{K}}]$.
\end{enumerate}
\end{thm}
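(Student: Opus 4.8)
The plan is to prove both isomorphisms by pinning down the candidate (co)morphism on the underlying data and then invoking Proposition~\ref{iso_ffb} (for $\overline{\mathbb{K}}$-categories) and the analogous characterisation of isomorphisms of $\overline{\mathbb{K}}$-extended L-convex sets. Recall the two constructions: $[\mathcal{A},\overline{\mathbb{K}}]$ has index set $\ob{\mathcal{A}}$ and underlying set the copresheaves on $\mathcal{A}$ (a subset of $\overline{\mathbb{K}}^{\ob{\mathcal{A}}}$), while $\db{\lcs{D},\overline{\mathbb{K}}}$ has object set $\ind{\lcs{D}}$ (a homomorphism $\lcs{D}\to\overline{\mathbb{K}}$ being determined by the single index it selects), with distance $d(v,w)=\bigwedge_{p\in\lcs{D}_0}[p(v),p(w)]=\sup_{p\in\lcs{D}_0}\bigl(p(w)-p(v)\bigr)$. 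Hence in both (i) and (ii) the natural candidate is forced to be the \emph{identity} on the common object/index set, and the content reduces to matching the remaining structure: the distance functions for (i), and the underlying subsets of $\overline{\mathbb{K}}^{\ind{\lcs{D}}}$ for (ii).

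For (i), the object set of $\db{[\mathcal{A},\overline{\mathbb{K}}],\overline{\mathbb{K}}}$ is $\ind{[\mathcal{A},\overline{\mathbb{K}}]}=\ob{\mathcal{A}}$, and its distance between $a,b$ unwinds to $\sup_{Q}\bigl(Q(b)-Q(a)\bigr)$, the supremum ranging over all copresheaves $Q$ on $\mathcal{A}$. I would show this equals $d_{\mathcal{A}}(a,b)$: the bound $\sup_{Q}(Q(b)-Q(a))\leq d_{\mathcal{A}}(a,b)$ is exactly the copresheaf condition $d_{\mathcal{A}}(a,b)\geq Q(b)-Q(a)$, and the reverse is witnessed by the co-Yoneda copresheaf $Q=\overline{Y}(a)=d_{\mathcal{A}}(a,-)$, for which $Q(b)-Q(a)=d_{\mathcal{A}}(a,b)-d_{\mathcal{A}}(a,a)$ recovers $d_{\mathcal{A}}(a,b)$ (using $d_{\mathcal{A}}(a,a)\in\{0,-\infty\}$). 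This is precisely the fully faithfulness of $\overline{Y}$ from the Yoneda embedding theorem, so the identity on $\ob{\mathcal{A}}$ is a fully faithful $\overline{\mathbb{K}}$-functor that is bijective on objects; Proposition~\ref{iso_ffb} then yields the isomorphism.

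For (ii), write $\mathcal{B}=\db{\lcs{D},\overline{\mathbb{K}}}$, so $[\mathcal{B},\overline{\mathbb{K}}]$ has index set $\ind{\lcs{D}}$ and underlying set the copresheaves on $\mathcal{B}$; I must show that this set of copresheaves is exactly $\lcs{D}_0$. One inclusion is immediate: for $p\in\lcs{D}_0$ we have $d_{\mathcal{B}}(v,w)=\sup_{p'\in\lcs{D}_0}(p'(w)-p'(v))\geq p(w)-p(v)$, so $p$ is a copresheaf on $\mathcal{B}$. The substantial inclusion reconstructs an arbitrary copresheaf $Q$ on $\mathcal{B}$ from within $\lcs{D}_0$. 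First, $d_{\mathcal{B}}(v,-)=\bigwedge_{p\in\lcs{D}_0}\bigl(p-p(v)\cdot\mathbf{1}\bigr)$ lies in $\lcs{D}_0$, since each $p-p(v)\cdot\mathbf{1}$ is there by (the $-$ form of) weight completeness and $\lcs{D}_0$ is closed under $\bigwedge$ by order completeness; note that the coordinatewise definition of $-$ gives $(p-p(v)\cdot\mathbf{1})(w)=p(w)-p(v)$ uniformly, which sidesteps the irregularities of $+$ at infinity. Translating once more, $\tilde p^{(v)}:=d_{\mathcal{B}}(v,-)+Q(v)\cdot\mathbf{1}\in\lcs{D}_0$ by weight completeness. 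I then claim $Q=\bigvee_{v}\tilde p^{(v)}$: the inequality $\tilde p^{(v)}\geq Q$ coordinatewise follows from the copresheaf condition and the counit inequality $(z-y)+y\geq z$ of the adjointness relation, while setting $v=w$ and using the identity law $d_{\mathcal{B}}(w,w)\leq 0$ gives $\tilde p^{(w)}(w)\leq Q(w)$; together these force $\bigvee_{v}\tilde p^{(v)}=Q$. Order completeness then places $Q$ in $\lcs{D}_0$, so the two underlying sets coincide and the identity on $\ind{\lcs{D}}$ is the desired isomorphism.

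The identifications of object/index sets and the ``free'' inclusions are bookkeeping; the genuine work is twofold. Part~(i), once the constructions are unwound, is a restatement of the co-Yoneda embedding theorem, so the conceptual step is recognising $\db{[\mathcal{A},\overline{\mathbb{K}}],\overline{\mathbb{K}}}$ as the copresheaf reflection of $\mathcal{A}$. The main obstacle I anticipate is the reconstruction in part~(ii): although its skeleton is clean, every inequality must be checked in the extended arithmetic of $\overline{\mathbb{K}}$, where $+$ and $-$ are irregular at $\pm\infty$ (e.g. $\infty+(-\infty)=\infty$ and $\infty-\infty=-\infty$). Verifying that $\tilde p^{(v)}\geq Q$, that $\tilde p^{(w)}(w)\leq Q(w)$, and that the chosen building blocks genuinely lie in $\lcs{D}_0$ therefore requires case analysis on whether the relevant values are finite, $-\infty$, or $\infty$, with the identity-law dichotomy $d_{\mathcal{B}}(w,w)\in\{0,-\infty\}$ being exactly what keeps the boundary cases under control.
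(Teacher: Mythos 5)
Your proposal is correct and follows essentially the same route as the paper's proof: part (i) is the paper's argument verbatim (identify the objects of $\db{[\mathcal{A},\overline{\mathbb{K}}],\overline{\mathbb{K}}}$ with $\ob{\mathcal{A}}$ via $a\mapsto\pi_a$, prove fully faithfulness with the co-Yoneda copresheaf $\overline{Y}(a)$ as witness and the identity law, then invoke Proposition~\ref{iso_ffb}), and part (ii) is the paper's argument as well, including the key reconstruction of an arbitrary copresheaf $Q$ as $\bigvee_{v}\bigl(d_{\db{\lcs{D},\overline{\mathbb{K}}}}(\pi_v,-)+Q(v)\cdot\mathbf{1}\bigr)$ using the $-$ form of weight completeness, order completeness, the adjointness relation, and the identity law. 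The only differences are cosmetic: you suppress the explicit bijections $\pi_{\lcs{D}}$, $\counit{\lcs{D}}$, $\counitinv{\lcs{D}}$ by treating the index identifications as identities, and you phrase the adjointness step through the counit inequality $(z-y)+y\geq z$ rather than citing the adjointness relation directly.
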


Recall that $\cong$ means ``is isomorphic to.''
Let us define the constructions and prove that 
they certainly work. 

\begin{lem}
Let $\mathcal{A}=(\ob{\mathcal{A}},d_\mathcal{A})$ be
a $\overline{\mathbb{K}}$-category.
Then the pair 
$[\mathcal{A},\overline{\mathbb{K}}]=(\ob{\mathcal{A}},$ 
$[\mathcal{A},\overline{\mathbb{K}}]_0)$, 
where 
$[\mathcal{A},\overline{\mathbb{K}}]_0$
is the set of all nonexpansive maps from $\mathcal{A}$ to
$\overline{\mathbb{K}}$ 
(as a $\overline{\mathbb{K}}$-category),
seen as a subset of 
$\overline{\mathbb{K}}^{\ob{\mathcal{A}}}$, 
is a $\overline{\mathbb{K}}$-extended
L-convex set. 
\end{lem}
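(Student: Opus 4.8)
The plan is to verify the two defining axioms of a $\overline{\mathbb{K}}$-extended L-convex set for the pair $(\ob{\mathcal{A}},[\mathcal{A},\overline{\mathbb{K}}]_0)$, whose index set is $\ob{\mathcal{A}}$ and whose underlying set sits inside $\overline{\mathbb{K}}^{\ob{\mathcal{A}}}$. The first move is to rewrite the membership condition in a form convenient for manipulating suprema and infima. A map $F\colon\ob{\mathcal{A}}\longrightarrow\mathbb{K}\cup\{-\infty,\infty\}$ lies in $[\mathcal{A},\overline{\mathbb{K}}]_0$ precisely when it satisfies the increasing condition $d_{\mathcal{A}}(a,a')\sqsubseteq[F(a),F(a')]$ for all $a,a'\in\ob{\mathcal{A}}$, where $[-,-]$ is the internal-hom of $\overline{\mathbb{K}}$. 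Applying the adjointness relation and the commutativity of the tensor product $+$ of $\overline{\mathbb{K}}$, this is equivalent to
\[
F(a)+d_{\mathcal{A}}(a,a')\sqsubseteq F(a')\qquad(\forall a,a'\in\ob{\mathcal{A}}).
\tag{$\ast$}
\]
Working with $(\ast)$ rather than with the raw differences $F(a')-F(a)$ is the crucial simplification: it lets me appeal directly to the algebraic laws of the SMC-CL $\overline{\mathbb{K}}$ (Example~\ref{ex:overlineK}) and thereby sidestep the case analysis at $\pm\infty$ that a naive computation would require.

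For the order completeness axiom I would take an arbitrary family $\{F_i\}_i\subseteq[\mathcal{A},\overline{\mathbb{K}}]_0$ and check that its coordinate-wise supremum and infimum again satisfy $(\ast)$. For the supremum, set $F=\bigvee_i F_i$; since $(-)+d_{\mathcal{A}}(a,a')$ preserves suprema by Proposition~\ref{prop:SMCCL} (applicable as $\overline{\mathbb{K}}$ is an SMC-CL), one has $F(a)+d_{\mathcal{A}}(a,a')=\bigvee_i\big(F_i(a)+d_{\mathcal{A}}(a,a')\big)$, and each term is $\sqsubseteq F_i(a')\sqsubseteq F(a')$ by $(\ast)$ for $F_i$, so the supremum is $\sqsubseteq F(a')$. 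For the infimum, set $F=\bigwedge_i F_i$; then $F(a)\sqsubseteq F_i(a)$ for each $i$, and monotonicity of $+$ gives $F(a)+d_{\mathcal{A}}(a,a')\sqsubseteq F_i(a)+d_{\mathcal{A}}(a,a')\sqsubseteq F_i(a')$, whence $F(a)+d_{\mathcal{A}}(a,a')\sqsubseteq\bigwedge_i F_i(a')=F(a')$. Both arguments cover the empty family $S=\emptyset$, which produces the constant maps $\infty$ and $-\infty$ and hence supplies the required bottom and top elements of $\overline{\mathbb{K}}^{\ob{\mathcal{A}}}$.

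For weight completeness I would fix $F\in[\mathcal{A},\overline{\mathbb{K}}]_0$ and $\alpha\in\mathbb{K}\cup\{-\infty,\infty\}$ and show that $G=F+\alpha\cdot\mathbf{1}$, i.e.\ $G(a)=F(a)+\alpha$, satisfies $(\ast)$. Starting from $F(a)+d_{\mathcal{A}}(a,a')\sqsubseteq F(a')$, adding $\alpha$ on the right and using monotonicity of $+$ yields $\big(F(a)+d_{\mathcal{A}}(a,a')\big)+\alpha\sqsubseteq F(a')+\alpha=G(a')$; associativity and commutativity then rearrange the left-hand side to $\big(F(a)+\alpha\big)+d_{\mathcal{A}}(a,a')=G(a)+d_{\mathcal{A}}(a,a')$, which is exactly $(\ast)$ for $G$.

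Once $(\ast)$ is in hand the verification is essentially routine, and the genuine content is confined to the supremum half of order completeness, which is the one place where the supremum-preservation property of $+$ — that is, the full SMC-CL structure of $\overline{\mathbb{K}}$ — is actually invoked. I expect the main obstacle to be the correct bookkeeping of the extended arithmetic at $-\infty$ and $\infty$ (for instance, that adding $\alpha=-\infty$ behaves well). Reformulating the whole condition through the adjointness relation at the outset, and phrasing each step via the monotonicity, associativity, and commutativity laws rather than explicit value tables, is precisely what makes that bookkeeping automatic.
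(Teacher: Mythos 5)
Your proposal is correct, and all three verifications go through (including at $\pm\infty$ and for the empty family). The skeleton is the same as the paper's --- direct verification of order completeness and weight completeness using the SMC-CL structure of $\overline{\mathbb{K}}$ --- but the computational route is genuinely different. You transpose the nonexpansiveness condition to the tensor side once and for all via the adjointness relation (your condition $(\ast)$: $F(a)+d_{\mathcal{A}}(a,a')\sqsubseteq F(a')$), whereas the paper keeps it in the internal-hom form $d_\mathcal{A}(a,b)\geq p(b)-p(a)$ and manipulates differences throughout. Concretely, for order completeness the paper fixes $p'$, bounds $d_\mathcal{A}(a,b)\geq p'(b)-p'(a)$ from below appropriately, takes suprema over $p'$, and invokes the limit-preservation properties of subtraction (Proposition~\ref{prop:lim_ten_hom} for the $\bigvee$ half, Proposition~\ref{prop:SMCCL} for the $\bigwedge$ half); for weight completeness it runs an adjointness proof tree reducing the claim to $\big(p(b)-p(a)\big)+p(a)\geq p(b)$. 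Your route instead needs a limit-preservation property only once --- sup-preservation of $(-)+y$ from Proposition~\ref{prop:SMCCL}, in the supremum half --- while your infimum half and weight completeness follow from monotonicity, associativity and commutativity of $+$ alone. What your version buys is exactly what you claim: the extended-arithmetic bookkeeping for $-$ disappears, and it becomes visible that the complete-lattice/closure structure is genuinely used in just one spot. What the paper's version buys is uniformity with the rest of the chapter: the difference expressions it manipulates, such as $\sup_{p}\{p(b)-p(a)\}$, are precisely the quantities that reappear as distances in Lemma~\ref{lcs2kcat_ob} and in the duality theorems, so its computations can be reused verbatim there.
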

\begin{proof}
\begin{description}[font=\normalfont]
\item[[Order completeness\!\!\!]]
Let $S$ be a set of nonexpansive maps from
$\mathcal{A}$ to $\overline{\mathbb{K}}$.
The claim is that $\bigvee_{p\in S} p$, 
$\bigwedge_{p\in S}p$
are again nonexpansive maps from $\mathcal{A}$ to
$\overline{\mathbb{K}}$.
In other words, that
\begin{align}
d_\mathcal{A}(a,b)
&\geq d_{\overline{\mathbb{K}}}\big(\bigvee_{p\in S} p(a),
\bigvee_{p\in S} p(b)\big)
=\inf_{p\in S} \{p(b)\}-\inf_{p\in S} \{p(a)\},\label{oc_inf}\\
d_\mathcal{A}(a,b)
&\geq d_{\overline{\mathbb{K}}}\big(\bigwedge_{p\in S} p(a),\bigwedge_{p\in S} p(b)\big)
=\sup_{p\in S} \{p(b)\}-\sup_{p\in S} \{p(a)\}\label{oc_sup}
\end{align}
hold for all $a,b\in \ob{\mathcal{A}}$.
\begin{description}[font=\normalfont]
\item[[(\ref{oc_inf})\!\!\!]]
For each fixed $p^\prime\in S$, the increasing condition
implies
\[
	d_\mathcal{A}(a,b)\geq p^\prime(b)-p^\prime(a)
	\geq \inf_{p\in S} \{p(b)\}-p^\prime (a).
\]
By taking $\sup$ with respect to $p^\prime\in S$,
we have
\[
	d_\mathcal{A}(a,b)\geq \sup_{p^\prime\in S}
	\left\{\inf_{p\in S}\{p(b)\}-p^\prime(a)\right\}
	=\inf_{p\in S} \{p(b)\}-\inf_{p\in S} \{p(a)\},
\]
where the equality follows from an instance of
a statement in
Proposition \ref{prop:lim_ten_hom} that
\[
\inf_{p\in S} \{p(b)\}-(-)
\]
turns suprema ($\inf$'s) into infima ($\sup$'s).
\item[[(\ref{oc_sup})\!\!\!]]
For each fixed $p^\prime\in S$, the increasing condition
implies
\[
	d_\mathcal{A}(a,b)\geq p^\prime(b)-p^\prime(a)
	\geq p^\prime(b)-\sup_{p\in S} \{p(a)\}.
\]
By taking $\sup$ with respect to $p^\prime\in S$,
we have
\[
	d_\mathcal{A}(a,b)\geq \sup_{p^\prime\in S}
	\left\{p^\prime(b)-\sup_{p\in S} \{p(a)\}\right\}
	=\sup_{p\in S} \{p(b)\}-\sup_{p\in S} \{p(a)\},
\]
where the equality follows from an instance of
a statement in
Proposition \ref{prop:SMCCL} that 
\[
(-)-\sup_{p\in S} \{p(a)\}
\]
preserves infima ($\sup$'s).
\end{description}
\item[[Weight completeness (for $+$)\!\!\!]]
Let $p$ be a nonexpansive map from $\mathcal{A}$ to 
$\overline{\mathbb{K}}$ and 
$\alpha\in \mathbb{K}\cup\{-\infty,\infty\}$.
The claim is that $p+\alpha\cdot\textbf{1}$
is again a nonexpansive map 
from $\mathcal{A}$ to $\overline{\mathbb{K}}$.
In other words, that 
\begin{align*}
d_\mathcal{A}(a,b)&\geq d_{\overline{\mathbb{K}}}\big(p(a)+\alpha,p(b)+\alpha\big)
=\big(p(b)+\alpha\big)-\big(p(a)+\alpha\big)
\end{align*}
hold for all $a,b\in \ob{\mathcal{A}}$.
We use the increasing condition for $p$
\[
	d_\mathcal{A}(a,b)\geq p(b)-p(a)
\]
and show 
\[
p(b)-p(a)\geq \big(p(b)+\alpha\big)-\big(p(a)+\alpha\big),
\]
which has an equivalent formula:
\begin{prooftree}
	\def\fCenter{\ \geq\ }
	\alwaysDoubleLine
	\Axiom$p(b)-p(a)\fCenter \big(p(b)+\alpha\big)-\big(p(a)+\alpha\big)$
	\UnaryInf$\big(p(b)-p(a)\big)+\big(p(a)+\alpha\big) \fCenter p(b)+\alpha$
	\UnaryInf$\big(\big(p(b)-p(a)\big)+p(a)\big)+\alpha \fCenter p(b)+\alpha$
\end{prooftree}
Therefore it suffices to prove 
$\big(p(b)-p(a)\big)+p(a)\geq p(b)$, as follows:
\begin{prooftree}
	\def\fCenter{\ \geq\ }
	\alwaysDoubleLine
	\Axiom$\big(p(b)-p(a)\big)+p(a)\fCenter p(b)$
	\UnaryInf$p(b)-p(a) \fCenter p(b)-p(a)$
\end{prooftree}
\end{description}
\end{proof}

\begin{lem}\label{lcs2kcat_ob}
Let $\lcs{D}=(\ind{\lcs{D}},\lcs{D}_0)$
be a $\overline{\mathbb{K}}$-extended L-convex set.
Then the pair
$\db{\lcs{D},\overline{\mathbb{K}}}=\big(\db{\lcs{D},\overline{\mathbb{K}}}_0,d_{\db{\lcs{D},\overline{\mathbb{K}}}}\big)$,
where
$\db{\lcs{D},\overline{\mathbb{K}}}_0$ is the set of all 
homomorphisms from $\lcs{D}$ to $\overline{\mathbb{K}}$
(as a $\overline{\mathbb{K}}$-extended L-convex set)
and $d_{\db{\lcs{D},\overline{\mathbb{K}}}}$  
is the distance function on it defined by the 
sup-distance between the underlying maps, 
is a $\overline{\mathbb{K}}$-category.
\end{lem}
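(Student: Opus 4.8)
The plan is to unwind the definitions and then observe that the two required axioms fall out of exactly the computation already carried out in the proposition showing that functor categories $\funct{\mathcal{A}}{\mathcal{B}}$ are $\overline{\mathbb{K}}$-categories. Recall that a $\overline{\mathbb{K}}$-category is a set of objects together with a distance function subject only to the composition law and the identity law. Here the objects are the homomorphisms $\Phi\colon\lcs{D}\longrightarrow\overline{\mathbb{K}}$; since $\overline{\mathbb{K}}$ as a $\overline{\mathbb{K}}$-extended L-convex set has the singleton index set and the full underlying set $\mathbb{K}\cup\{-\infty,\infty\}$, such a $\Phi$ amounts to a choice of element $\ind{\Phi}(\ast)\in\ind{\lcs{D}}$, with underlying map $\Phi_0\colon\lcs{D}_0\longrightarrow\mathbb{K}\cup\{-\infty,\infty\}$ given by $\Phi_0(p)=p(\ind{\Phi}(\ast))$. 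The distance is the sup-distance between underlying maps, which (writing $d$ for $d_{\db{\lcs{D},\overline{\mathbb{K}}}}$) I would record as
\[
	d(\Phi,\Psi)=\bigwedge_{p\in\lcs{D}_0}\big\{\,d_{\overline{\mathbb{K}}}\big(\Phi_0(p),\Psi_0(p)\big)\,\big\}=\sup_{p\in\lcs{D}_0}\{\Psi_0(p)-\Phi_0(p)\},
\]
noting that this infimum always exists because $\overline{\mathbb{K}}$ is a complete lattice, so $d$ is well-defined with values in $\overline{\mathbb{K}}$.

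First I would verify the identity law: for each $p$ the identity law for $\overline{\mathbb{K}}$ (viewed as a $\overline{\mathbb{K}}$-category) gives $e\sqsubseteq d_{\overline{\mathbb{K}}}(\Phi_0(p),\Phi_0(p))$, and this inequality is preserved on taking the infimum over $p$, yielding $e\sqsubseteq d(\Phi,\Phi)$. For the composition law I would fix $p$ and chain
\[
	d(\Phi,\Psi)\otimes d(\Psi,\Xi)\sqsubseteq d_{\overline{\mathbb{K}}}\big(\Phi_0(p),\Psi_0(p)\big)\otimes d_{\overline{\mathbb{K}}}\big(\Psi_0(p),\Xi_0(p)\big)\sqsubseteq d_{\overline{\mathbb{K}}}\big(\Phi_0(p),\Xi_0(p)\big),
\]
where the first step uses that an infimum lies below each of its terms together with monotonicity of $\otimes$, and the second is the composition law for $\overline{\mathbb{K}}$. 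Since the left-hand side is independent of $p$, taking the infimum over $p$ on the right gives $d(\Phi,\Psi)\otimes d(\Psi,\Xi)\sqsubseteq d(\Phi,\Xi)$, as required. This is word-for-word the functor-category argument, specialized to codomain $\overline{\mathbb{K}}$.

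The point worth emphasizing — and essentially the only place care is needed — is that this computation invokes solely the $\overline{\mathbb{K}}$-category axioms of the \emph{codomain} $\overline{\mathbb{K}}$ together with the monotonicity and limit-preservation properties of $+$ and $-$ established in Propositions~\ref{prop:SMCCL} and \ref{prop:lim_ten_hom}; in particular every delicate manipulation with $\pm\infty$ is already absorbed into those proved facts, so no fresh case analysis on infinities is required. I expect the main conceptual subtlety to be the temptation to treat $\db{\lcs{D},\overline{\mathbb{K}}}$ literally as a sub-$\overline{\mathbb{K}}$-category of $\funct{\lcs{D}_0}{\overline{\mathbb{K}}}$ via $\Phi\mapsto\Phi_0$: this assignment need not be injective (distinct homomorphisms may share an underlying map), so I would avoid relying on an honest inclusion. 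It is harmless, however, precisely because the verification above never uses the domain structure of $\lcs{D}$ nor the nonexpansiveness of $\Phi_0$, and because both axioms are universally quantified over objects and hold pointwise. Hence $\db{\lcs{D},\overline{\mathbb{K}}}$ is a $\overline{\mathbb{K}}$-category.
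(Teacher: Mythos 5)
Your proof is correct and follows essentially the same route as the paper: both verify the composition and identity laws pointwise (for each $p\in\lcs{D}_0$) using the $\overline{\mathbb{K}}$-category structure of the codomain $\overline{\mathbb{K}}$ and then pass to the infimum over $\lcs{D}_0$, which is exactly the functor-category computation specialized to this setting. The only cosmetic difference is that the paper re-derives the needed pointwise inequalities, such as $\big(p(v)-p(u)\big)+\big(p(w)-p(v)\big)\geq p(w)-p(u)$ and $0\geq p(v)-p(v)$, by explicit proof trees, whereas you cite them as the already-established composition and identity laws of $\overline{\mathbb{K}}$ viewed as a $\overline{\mathbb{K}}$-category.
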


\begin{proof}
First note that elements of 
$\db{\lcs{D},\overline{\mathbb{K}}}_0$ 
can canonically be identified with that of 
$\ind{\lcs{D}}$, 
via a bijection $\pi_\lcs{D}$ defined as follows:
\[
	\pi_\lcs{D}\colon \ind{\lcs{D}}\longrightarrow \db{\lcs{D},\overline{\mathbb{K}}}_0,
	\qquad 
	\pi_\lcs{D}(v)=\pi_v
	\quad (\forall v\in \ind{\lcs{D}}),
\]
where
\[
	\ind{\pi_v}\colon \{\ast\}\longrightarrow \ind{\lcs{D}},\qquad
	\ind{\pi_v}(\ast)=v.
\]
The underlying maps of $\pi_v$'s are given as the 
``projection maps'':
\[
(\pi_v)_0\colon \lcs{D}_0\longrightarrow \mathbb{K}\cup\{-\infty,\infty\}\subseteq \overline{\mathbb{K}}^{\{\ast\}}\qquad
(\pi_v)_0(p)=p(v).
\]
See Fig.~\ref{fig:lcsproj} for an illustration.
Thus we may write $\db{\lcs{D},\overline{\mathbb{K}}}_0=\{\pi_v\}_{v\in \ind{\lcs{D}}}$.
Also note that 
\begin{align}
d_{\db{\lcs{D},\overline{\mathbb{K}}}}(\pi_v,\pi_w)
&=\sup_{p\in \lcs{D}_0}\{d_{\overline{\mathbb{K}}}((\pi_v)_0(p),(\pi_w)_0(p))\}\nonumber\\
&=\sup_{p\in \lcs{D}_0}\{d_{\overline{\mathbb{K}}}(p(v),p(w))\}\nonumber\\
&=\sup_{p\in \lcs{D}_0}\{p(w)-p(v)\}.\label{d_dk}
\end{align}
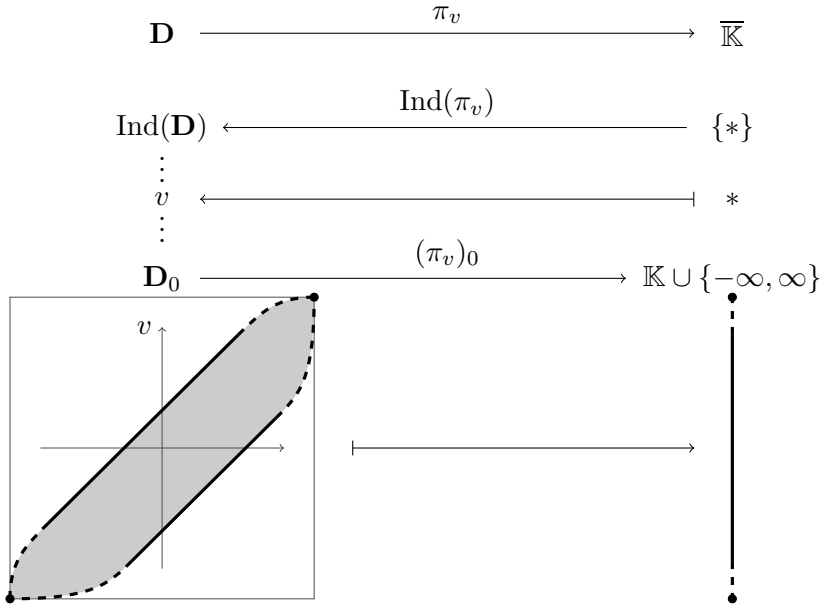
\begin{figure}
\centering
\begin{tikzpicture}
\node at (0,5.5) {$\lcs{D}$};
\node at (7.5,5.5) {$\overline{\mathbb{K}}$};
\draw [my arrow] (0.5,5.5) -- (7,5.5);
\node at (3.75,5.5)[anchor=south] {$\pi_v$};
\node at (0,4.25) {$\ind{\lcs{D}}$};
\node at (7.5,4.25) {$\{\ast\}$};
\node at (0,3.8) {$\vdots$};
\node at (0,3.3) {$v$};
\node at (7.5,3.3) {$\ast$};
\node at (0,3) {$\vdots$};
\draw [my mapsto] (7,3.3) -- (0.5,3.3);
\draw [my arrow] (6.9,4.25) -- (0.8,4.25);
\node at (3.75,4.25) [anchor=south]{$\ind{\pi_v}$};
\draw [my mapsto] (2.5,0) -- (7,0);
\draw [my arrow] (0.5,2.25) -- (6.1,2.25);
\node at (3.75,2.25)[anchor=south] {$(\pi_v)_0$};
\node at (0,2.25) {$\lcs{D}_0$};
\node at (7.5,2.25) {$\mathbb{K}\cup\{-\infty,\infty\}$};

\begin{scope} 
\draw [my grid] (-2,-2) rectangle (2,2);
\draw [my grid,my coordarrow] (-1.6,0) -- (1.6,0);
\draw [my grid,my coordarrow] (0,-1.6) -- (0,1.6) node[anchor=east,my cood] {$v$};
\newcommand{\Da}{1.0}
\draw [my frame,dashed] (-1.5,-\Da) .. 
      controls (-1.8,-\Da-0.3) 
      and (-2,-\Da-0.5) .. 
      (-2,-2);
\draw [my frame] (-1.5,-\Da) -- (\Da,1.5);
\draw [my frame,dashed] (\Da,1.5) .. 
      controls (\Da+0.3,1.8) 
      and (\Da+0.5,2) .. 
      (2,2);
\newcommand{\Ea}{0.4}
\draw [my frame,dashed] (-\Ea,-1.5) ..
      controls (-\Ea-0.3,-1.8)
      and (-\Ea-0.5,-2) ..
      (-2,-2);
\draw [my frame] (-\Ea,-1.5) -- (1.5,\Ea);
\draw [my frame,dashed] (1.5,\Ea) ..
      controls (1.8,\Ea+0.3)
      and (2,\Ea+0.5) ..
      (2,2);
\begin{scope}[on background layer]
\fill [black!20!white] 
      (-2,-2) ..controls(-2,-\Da-0.5)and(-1.8,-\Da-0.3).. 
      (-1.5,-\Da) -- 
      (\Da,1.5) ..controls(\Da+0.3,1.8)and(\Da+0.5,2).. 
      (2,2) ..controls(2,\Ea+0.5)and(1.8,\Ea+0.3)..
      (1.5,\Ea) --
      (-\Ea,-1.5) ..controls(-\Ea-0.3,-1.8)and(-\Ea-0.5,-2)..
      (-2,-2);
\end{scope}
\filldraw [black] (-2,-2) circle (\csize)
          (2,2) circle (\csize);
\end{scope}

\begin{scope}[shift={(7.5,0)}] 
\draw [my frame,dashed] (0,-1.5) -- (0,-2);
\draw [my frame] (0,-1.5) -- (0,1.5);
\draw [my frame,dashed] (0,1.5) -- (0,2);
\filldraw [black] (0,-2) circle (\csize)
          (0,2) circle (\csize);
\end{scope}
\useasboundingbox (-2,-2) rectangle (9.5,5.5);
\end{tikzpicture}
\caption{The homomorphism $\pi_v$}
\label{fig:lcsproj}
\end{figure}
\begin{description}[font=\normalfont]
\item[[Composition law\!\!\!]]
The claim is 
\[
	d_{\db{\lcs{D},\overline{\mathbb{K}}}}(\pi_u,\pi_v)+
	d_{\db{\lcs{D},\overline{\mathbb{K}}}}(\pi_v,\pi_w)\geq
	d_{\db{\lcs{D},\overline{\mathbb{K}}}}(\pi_u,\pi_w),
\]
or by (\ref{d_dk}), equivalently,
\begin{align}\label{triangle}
\sup_{p\in \lcs{D}_0}\{p(v)-p(u)\}+\sup_{p\in \lcs{D}_0}\{p(w)-p(v)\}
\geq\sup_{p\in \lcs{D}_0}\{p(w)-p(u)\}.
\end{align}
We first prove 
\begin{align}\label{triangle3}
\big(p(v)-p(u)\big) + \big(p(w)-p(v)\big)\geq p(w)-p(u)
\end{align}
for each $p\in \lcs{D}_0$, as follows:
\begin{prooftree}
	\def\fCenter{\ \geq\ }
	\alwaysDoubleLine
	\Axiom$\big(p(v)-p(u)\big) + \big(p(w)-p(v)\big)\fCenter p(w)-p(u)$
	\UnaryInf$\big(p(w)-p(v)\big)+\big(p(v)-p(u)\big) \fCenter p(w)-p(u)$
	\UnaryInf$p(w)-p(v) \fCenter \big(p(w)-p(u)\big)-\big(p(v)-p(u)\big)$
\end{prooftree}
where the bottom expression is an instance of the
composition law for $-$. 
Thus, by taking $\sup$ in (\ref{triangle3}) 
with respect to $p$, we have
\begin{align}\label{triangle2}
	\sup_{p\in \lcs{D}_0}\big\{\big(p(v)-p(u)\big) +\big(p(w)-p(v)\big)\big\}
	\geq\sup_{p\in \lcs{D}_0}\{p(w)-p(u)\}.
\end{align}
On the other hand, we have 
\[
\sup_{p\in \lcs{D}_0}\{p(v)-p(u)\} +\sup_{p\in \lcs{D}_0}\{p(w)-p(v)\}
\geq\big(p^\prime(v)-p^\prime(u)\big) +\big(p^\prime(w)-p^\prime(v)\big)
\]
for each $p^\prime\in \lcs{D}_0$, and therefore
\[
\sup_{p\in \lcs{D}_0}\{p(v)-p(u)\} +\sup_{p\in \lcs{D}_0}\{p(w)-p(v)\}\geq 
\sup_{p\in \lcs{D}_0}\big\{\big(p(v)-p(u)\big) +\big(p(w)-p(v)\big)\big\}.
\]
We combine this with (\ref{triangle2}) and obtain
(\ref{triangle}).
\item[[Identity law\!\!\!]]
The claim is 
\[
	0\geq
	d_{\db{\lcs{D},\overline{\mathbb{K}}}}(\pi_v,\pi_v),
\]
or by (\ref{d_dk}), equivalently,
\begin{align}\label{identity1}
0\geq\sup_{p\in \lcs{D}_0}\{p(v)-p(v)\}.
\end{align}
First note that for each $p\in \lcs{D}_0$
\begin{align}\label{identity2}
0\geq p(v)-p(v)
\end{align}
holds, as proved below:
\begin{prooftree}
	\def\fCenter{\ \geq\ }
	\alwaysDoubleLine
	\Axiom$0\fCenter p(v)-p(v)$
	\UnaryInf$p(v)\fCenter p(v)$
\end{prooftree}
Taking $\sup$ in (\ref{identity2}) with respect to $p$
yields (\ref{identity1}).
\end{description}
\end{proof}

Now all the symbols are defined and 
we are ready to prove Theorem~\ref{mainthm_ob}:

\begin{proof}[Proof of Theorem~\ref{mainthm_ob}]
\begin{description}[font=\normalfont]
\item[[(i)\!\!\!]]
Since 
$[\mathcal{A},\overline{\mathbb{K}}]=(\ob{\mathcal{A}},[\mathcal{A},\overline{\mathbb{K}}]_0)$,
$\db{[\mathcal{A},\overline{\mathbb{K}}],\overline{\mathbb{K}}}_0=\{\pi_a\}_{a\in \ob{\mathcal{A}}}$.
We claim that 
\[
\unit{\mathcal{A}}\colon\mathcal{A}\longrightarrow\db{[\mathcal{A},\overline{\mathbb{K}}],\overline{\mathbb{K}}}
\]
given by
\[
\ob{\unit{\mathcal{A}}}\colon\ob{\mathcal{A}}\longrightarrow\db{[\mathcal{A},\overline{\mathbb{K}}],\overline{\mathbb{K}}}_0,\qquad \ob{\unit{\mathcal{A}}}(a)=\unit{\mathcal{A}}(a)=\pi_a
\quad (\forall a\in \ob{\mathcal{A}})
\]
as a map between the sets of points, is an isomorphism
(note that 
$\ob{\unit{\mathcal{A}}}=\pi_{[\mathcal{A},\overline{\mathbb{K}}]}$,
where $\pi_{[\mathcal{A},\overline{\mathbb{K}}]}$ is
the bijection defined in Lemma \ref{lcs2kcat_ob}).
By Proposition \ref{iso_ffb}, it suffices to show
that $\unit{\mathcal{A}}$ is a fully faithful 
$\overline{\mathbb{K}}$-functor 
(because $\ob{\unit{\mathcal{A}}}$ is 
certainly bijective).
Therefore, we aim to show
\begin{align*}
d_\mathcal{A}(a,b)
&=d_{\db{[\mathcal{A},\overline{\mathbb{K}}],\overline{\mathbb{K}}}}(\pi_a,\pi_b),
\end{align*}
or equivalently,
\begin{align*}
d_\mathcal{A}(a,b)
&=\sup_{p\in [\mathcal{A},\overline{\mathbb{K}}]_0}\{p(b)-p(a)\}.
\end{align*}
\begin{description}[font=\normalfont]
\item[[\mbox{$d_\mathcal{A}(a,b)
\leq \sup_{p\in [\mathcal{A},\overline{\mathbb{K}}]_0}\{p(b)-p(a)\}\!\!\!$}]]
Recall that $\overline{Y}(a)$ defined by
\[
\ob{\overline{Y}(a)}=\lambda b\in \ob{\mathcal{A}}.\,\Hom_\mathcal{A}(a,b)
\]
is an element of $[\mathcal{A},\overline{\mathbb{K}}]_0$.
Therefore, 
\[
d_\mathcal{A}(a,b)-d_\mathcal{A}(a,a)=
\overline{Y}(a)(b)-\overline{Y}(a)(a)\leq 
\sup_{p\in [\mathcal{A},\overline{\mathbb{K}}]_0}\{p(b)-p(a)\}
\]
and it suffices to show 
\[
d_\mathcal{A}(a,b)\geq d_\mathcal{A}(a,b)-d_\mathcal{A}(a,a),
\]
but this follows from the identity law 
$0\geq d_\mathcal{A}(a,a)$ for $\mathcal{A}$ and 
the unit law and the monotonicity of $-$.
\item[[\mbox{$d_\mathcal{A}(a,b)
\geq \sup_{p\in [\mathcal{A},\overline{\mathbb{K}}]_0}\{p(b)-p(a)\}\!\!\!$}]]
It suffices to show 
\[
d_\mathcal{A}(a,b)\geq p(b)-p(a)
\]
for each $p\in [\mathcal{A},\overline{\mathbb{K}}]_0$;
but this is nothing but the increasing condition for
$p$.
\end{description}
\item[[(ii)\!\!\!]]
Recall that 
$\db{\lcs{D},\overline{\mathbb{K}}}=(\{\pi_v\}_{v\in \ind{\lcs{D}}},d_{\db{\lcs{D},\overline{\mathbb{K}}}})$,
where
\[
d_{\db{\lcs{D},\overline{\mathbb{K}}}}(\pi_v,\pi_w)
=\sup_{p\in \lcs{D}_0}\{p(w)-p(v)\}.
\]
Now, the $\overline{\mathbb{K}}$-extended L-convex set
$[\db{\lcs{D},\overline{\mathbb{K}}},\overline{\mathbb{K}}]=
(\{\pi_v\}_{v\in\ind{\lcs{D}}},[\db{\lcs{D},\overline{\mathbb{K}}},\overline{\mathbb{K}}]_0)$
is defined as follows: for all $q\in \overline{\mathbb{K}}^{\{\pi_v\}_{v\in \ind{\lcs{D}}}}$,
\[
q\in [\db{\lcs{D},\overline{\mathbb{K}}},\overline{\mathbb{K}}]_0
\iff 
d_{\db{\lcs{D},\overline{\mathbb{K}}}}(\pi_v,\pi_w)\geq 
q(\pi_w)-q(\pi_v)\quad (\forall \pi_v,\pi_w\in \{\pi_v\}_{v\in \ind{\lcs{D}}}).
\]
We prove that bijections
\begin{align*}
\ind{\counit{\lcs{D}}}\colon \ind{\lcs{D}}\longrightarrow \{\pi_v\}_{v\in \ind{\lcs{D}}},\qquad &
\ind{\counit{\lcs{D}}}(v)=\pi_v \quad (\forall v\in \ind{\lcs{D}}),\\
\ind{\counitinv{\lcs{D}}}\colon \{\pi_v\}_{v\in \ind{\lcs{D}}}\longrightarrow \ind{\lcs{D}},\qquad &
\ind{\counitinv{\lcs{D}}}(\pi_v)=v\quad (\forall \pi_v\in \{\pi_v\}_{v\in \ind{\lcs{D}}})
\end{align*}
define homomorphisms 
(note that $\ind{\counit{\lcs{D}}}=\pi_\lcs{D}$,
where $\pi_\lcs{D}$ is the bijection defined in the 
proof of Lemma \ref{lcs2kcat_ob}); 
in other words, there are maps
\begin{align*}
(\counit{\lcs{D}})_0\colon [\db{\lcs{D},\overline{\mathbb{K}}},\overline{\mathbb{K}}]_0\longrightarrow \lcs{D}_0,\quad&
(\counit{\lcs{D}})_0(q)=\lambda v\in \ind{\lcs{D}}.\, q(\pi_v) \quad (\forall q\in [\db{\lcs{D},\overline{\mathbb{K}}},\overline{\mathbb{K}}]_0),\\
(\counitinv{\lcs{D}})_0\colon\lcs{D}_0\longrightarrow [\db{\lcs{D},\overline{\mathbb{K}}},\overline{\mathbb{K}}]_0,\quad  &
(\counitinv{\lcs{D}})_0(p)=\lambda \pi_v\in \{\pi_v\}_{v\in \ind{\lcs{D}}}.\,p(v)\quad (\forall p\in \lcs{D}_0).
\end{align*}
To be more specific about the point, we have to verify
that the images of the above maps are 
contained in the codomains.
\begin{description}[font=\normalfont]
\item[[\mbox{$(\counit{\lcs{D}})_0([\db{\lcs{D},\overline{\mathbb{K}}}, \overline{\mathbb{K}}]_0)\subseteq \lcs{D}_0\!\!\!$}]]
First we prove that for all images 
$\overline{Y}(\pi_v)\in [\db{\lcs{D},\overline{\mathbb{K}}}, \overline{\mathbb{K}}]_0$
of the co-Yoneda embedding, 
$(\counit{\lcs{D}})_0\big(\ob{\overline{Y}(\pi_v)}\big)$,
abbreviated as 
$(\counit{\lcs{D}})_0\big(\overline{Y}(\pi_v)\big)$, 
is an element of $\lcs{D}_0$.
To see this, observe that
\begin{align*}
(\counit{\lcs{D}})_0\big(\overline{Y}(\pi_v)\big)
&=\lambda w\in \ind{\lcs{D}}.\,d_{\db{\lcs{D},\overline{\mathbb{K}}}}(\pi_v,\pi_w)\\
&=\lambda w\in \ind{\lcs{D}}.\,\sup_{p\in \lcs{D}_0}\{p(w)-p(v)\},
\end{align*}
in other words, 
\begin{align*}
(\counit{\lcs{D}})_0\big(\overline{Y}(\pi_v)\big)
&=\sup_{p\in \lcs{D}_0}\{p-p(v)\cdot\textbf{1}\}\\
&=\bigwedge_{p\in \lcs{D}_0}\{p-p(v)\cdot\textbf{1}\}
\end{align*}
holds.
So the weight and order completeness conditions for
$\lcs{D}$ assure 
$(\counit{\lcs{D}})_0\big(\overline{Y}(\pi_v)\big)\in\lcs{D}_0$.

Now take arbitrary 
$q\in [\db{\lcs{D},\overline{\mathbb{K}}}, \overline{\mathbb{K}}]_0$. 
We have
\[
d_{\db{\lcs{D},\overline{\mathbb{K}}}}(\pi_v,\pi_w)\geq 
q(\pi_w)-q(\pi_v)\quad (\forall \pi_v,\pi_w\in \{\pi_v\}_{v\in \ind{\lcs{D}}}).
\]
The adjointness relation (for the SMC-CL 
$\overline{\mathbb{K}}$) yields
\[
d_{\db{\lcs{D},\overline{\mathbb{K}}}}(\pi_v,\pi_w)
+q(\pi_v)\geq 
q(\pi_w)\quad (\forall \pi_v,\pi_w\in \{\pi_v\}_{v\in \ind{\lcs{D}}}),
\]
so, with respect to the coordinate-wise order on 
$\overline{\mathbb{K}}^\ind{\lcs{D}}$
(also denoted by $\geq$), we obtain
\begin{align}\label{yoneda_q}
(\counit{\lcs{D}})_0\big(\overline{Y}(\pi_v)\big)
+q(\pi_v)\cdot \textbf{1}\geq 
(\counit{\lcs{D}})_0(q)\quad (\forall v\in \ind{\lcs{D}}).
\end{align}
Since by the weight completeness 
$(\counit{\lcs{D}})_0\big(\overline{Y}(\pi_v)\big)+q(\pi_v)\cdot \textbf{1}$
is an element of $\lcs{D}_0$, so is 
\[
q^\prime=\inf_{v\in \ind{\lcs{D}}}\left\{(\counit{\lcs{D}})_0\big(\overline{Y}(\pi_v)\big)+q(\pi_v)\cdot \textbf{1}\right\}=
\bigvee_{v\in \ind{\lcs{D}}}\left\{(\counit{\lcs{D}})_0\big(\overline{Y}(\pi_v)\big)+q(\pi_v)\cdot \textbf{1}\right\}
\]
by the order completeness.
We claim that $q^\prime=(\counit{\lcs{D}})_0(q)$.
Because the definition of $q^\prime$ and (\ref{yoneda_q})
imply $q^\prime\geq (\counit{\lcs{D}})_0(q)$,
it suffices to show that for each $v\in \ind{\lcs{D}}$,
$q^\prime(v)\leq (\counit{\lcs{D}})_0(q)(v)=q(\pi_v)$.
This is proved as follows:
\begin{align*}
q^\prime(v)
&=\left(\inf_{v\in \ind{\lcs{D}}}\left\{(\counit{\lcs{D}})_0\big(\overline{Y}(\pi_v)\big)+q(\pi_v)\cdot \textbf{1}\right\}\right)(v)\\
&\leq \left((\counit{\lcs{D}})_0\big(\overline{Y}(\pi_v)\big)+q(\pi_v)\cdot \textbf{1}\right)(v)\\
&=(\counit{\lcs{D}})_0\big(\overline{Y}(\pi_v)\big)(v)+(q(\pi_v)\cdot \textbf{1})(v)\\
&=d_{\db{\lcs{D},\overline{\mathbb{K}}}}(\pi_v,\pi_v)+q(\pi_v)\\
&\leq 0+q(\pi_v)\\
&=q(\pi_v).
\end{align*}
\item[[\mbox{$(\counitinv{\lcs{D}})_0(\lcs{D}_0)\subseteq [\db{\lcs{D},\overline{\mathbb{K}}}, \overline{\mathbb{K}}]_0\!\!\!$}]]
The condition for $(\counitinv{\lcs{D}})_0(p)$ 
to be an element of 
$[\db{\lcs{D},\overline{\mathbb{K}}}, \overline{\mathbb{K}}]_0$
is that 
\[
d_{\db{\lcs{D},\overline{\mathbb{K}}}}(\pi_v,\pi_w)\geq 
(\counitinv{\lcs{D}})_0(p)(\pi_w)-(\counitinv{\lcs{D}})_0(p)(\pi_v)\quad (\forall \pi_v,\pi_w\in \{\pi_v\}_{v\in \ind{\lcs{D}}}),
\]
which is equivalent to
\[
\sup_{p\in \lcs{D}_0}\{p(w)-p(v)\}\geq 
p(w)-p(v)\quad (\forall v,w\in \ind{\lcs{D}}),
\]
an obvious inequality.
\end{description}
Therefore, we have two homomorphisms
\begin{align*}
\counit{\lcs{D}}&\colon [\db{\lcs{D},\overline{\mathbb{K}}},\overline{\mathbb{K}}]\longrightarrow \lcs{D},\\
\counitinv{\lcs{D}}&\colon \lcs{D}\longrightarrow [\db{\lcs{D},\overline{\mathbb{K}}},\overline{\mathbb{K}}].
\end{align*}
Clearly
$\ind{\counitinv{\lcs{D}}}\circ \ind{\counit{\lcs{D}}}=1_\ind{\lcs{D}}$ and
$\ind{\counit{\lcs{D}}}\circ \ind{\counitinv{\lcs{D}}}=1_{\{\pi_v\}_{v\in \ind{\lcs{D}}}}$ hold, 
and they are isomorphisms.
\end{description}
\end{proof}

Theorem~\ref{mainthm_ob} immediately assures that 
the underlying sets of 
$\overline{\mathbb{K}}$-extended L-convex sets 
are, in a sense, nothing but the sets of points 
of presheaf categories for some
$\overline{\mathbb{K}}$-categories.

\begin{cor}\label{cor}
For any $\overline{\mathbb{K}}$-extended L-convex set
$\lcs{D}=(\ind{\lcs{D}},\lcs{D}_0)$, the pair
$\mathcal{D}=(\lcs{D}_0,d_{\lcs{D}_0})$
where $d_{\lcs{D}_0}$ is a distance function on 
$\lcs{D}_0$ given as the restriction of 
the sup-distance on 
$\overline{\mathbb{K}}^\ind{\lcs{D}}$, is 
isomorphic to the presheaf category 
$\funct{\db{\lcs{D},\overline{\mathbb{K}}}}{\overline{\mathbb{K}}}$
of the $\overline{\mathbb{K}}$-category
$\db{\lcs{D},\overline{\mathbb{K}}}^\op$.
Conversely, the presheaf category
$\funct{\mathcal{A}^\op}{\overline{\mathbb{K}}}=([\mathcal{A}^\op,\overline{\mathbb{K}}]_0,d_{[\mathcal{A}^\op,\overline{\mathbb{K}}]_0})$
of any $\overline{\mathbb{K}}$-category
$\mathcal{A}=(\ob{\mathcal{A}},d_\mathcal{A})$ 
gives rise to a 
$\overline{\mathbb{K}}$-extended L-convex set
$[\mathcal{A}^\op,\overline{\mathbb{K}}]=(\ob{\mathcal{A}},[\mathcal{A}^\op,\overline{\mathbb{K}}]_0)$.
\end{cor}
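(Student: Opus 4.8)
The plan is to deduce everything from Theorem~\ref{mainthm_ob} together with a tautological identification of functor categories. The first observation I would record is that for \emph{any} $\overline{\mathbb{K}}$-category $\mathcal{A}$, the construction $[\mathcal{A},\overline{\mathbb{K}}]$ produces a $\overline{\mathbb{K}}$-extended L-convex set whose underlying set $[\mathcal{A},\overline{\mathbb{K}}]_0$ is exactly the set of $\overline{\mathbb{K}}$-functors $\mathcal{A}\to\overline{\mathbb{K}}$, that is, the object set of $\funct{\mathcal{A}}{\overline{\mathbb{K}}}$. Moreover, the restriction of the sup-distance on $\overline{\mathbb{K}}^{\ob{\mathcal{A}}}$ to this set is, by definition,
\[
d_{[\mathcal{A},\overline{\mathbb{K}}]_0}(F,G)=\sup_{a\in\ob{\mathcal{A}}}\{G(a)-F(a)\}=\sup_{a\in\ob{\mathcal{A}}}\{d_{\overline{\mathbb{K}}}(F(a),G(a))\},
\]
which is precisely $d_{\funct{\mathcal{A}}{\overline{\mathbb{K}}}}$. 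Hence $([\mathcal{A},\overline{\mathbb{K}}]_0,d_{[\mathcal{A},\overline{\mathbb{K}}]_0})$ and $\funct{\mathcal{A}}{\overline{\mathbb{K}}}$ are literally the same $\overline{\mathbb{K}}$-category.

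The second assertion of the corollary is then immediate: applying the first lemma proved above to $\mathcal{A}^\op$ shows that $[\mathcal{A}^\op,\overline{\mathbb{K}}]=(\ob{\mathcal{A}},[\mathcal{A}^\op,\overline{\mathbb{K}}]_0)$ is a $\overline{\mathbb{K}}$-extended L-convex set, and its underlying set $[\mathcal{A}^\op,\overline{\mathbb{K}}]_0$ is by the previous paragraph the object set of the presheaf category $\funct{\mathcal{A}^\op}{\overline{\mathbb{K}}}$ of $\mathcal{A}$. So the presheaf category indeed gives rise to this $\overline{\mathbb{K}}$-extended L-convex set, with no further work.

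For the first assertion I would argue as follows. By Theorem~\ref{mainthm_ob}(ii) the homomorphism $\counit{\lcs{D}}\colon[\db{\lcs{D},\overline{\mathbb{K}}},\overline{\mathbb{K}}]\to\lcs{D}$ is an isomorphism of $\overline{\mathbb{K}}$-extended L-convex sets. The one genuine verification is that such an isomorphism restricts to an \emph{isometry} of the underlying sets with their sup-distances (not merely a nonexpansive bijection). This can be checked directly: the underlying map $(\counit{\lcs{D}})_0$ sends $q$ to $\lambda v.\,q(\pi_v)$, i.e.\ it merely relabels coordinates along the index bijection $\ind{\counit{\lcs{D}}}$, and relabelling preserves a supremum taken over coordinates, so
\[
d_{\lcs{D}_0}\big((\counit{\lcs{D}})_0(q_1),(\counit{\lcs{D}})_0(q_2)\big)=\sup_{v\in\ind{\lcs{D}}}\{q_2(\pi_v)-q_1(\pi_v)\}=d_{[\db{\lcs{D},\overline{\mathbb{K}}},\overline{\mathbb{K}}]_0}(q_1,q_2).
\]
Alternatively one may invoke the remark that homomorphisms have nonexpansive underlying maps together with Proposition~\ref{iso_ffb}: the underlying maps of $\counit{\lcs{D}}$ and of its inverse $\counitinv{\lcs{D}}$ are mutually inverse $\overline{\mathbb{K}}$-functors, hence fully faithful, hence isometries. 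Combining this isometry with the identification of the first paragraph (taken for $\mathcal{A}=\db{\lcs{D},\overline{\mathbb{K}}}$) gives
\[
\mathcal{D}=(\lcs{D}_0,d_{\lcs{D}_0})\cong\big([\db{\lcs{D},\overline{\mathbb{K}}},\overline{\mathbb{K}}]_0,d_{[\db{\lcs{D},\overline{\mathbb{K}}},\overline{\mathbb{K}}]_0}\big)=\funct{\db{\lcs{D},\overline{\mathbb{K}}}}{\overline{\mathbb{K}}},
\]
and since $\funct{\db{\lcs{D},\overline{\mathbb{K}}}}{\overline{\mathbb{K}}}=\funct{(\db{\lcs{D},\overline{\mathbb{K}}}^\op)^\op}{\overline{\mathbb{K}}}$ is by definition the presheaf category of $\db{\lcs{D},\overline{\mathbb{K}}}^\op$, the first claim follows.

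Since the substantive content is already carried by Theorem~\ref{mainthm_ob}, I expect no serious difficulty. The only points demanding care are the bookkeeping of opposites—tracking that the presheaf category of $\db{\lcs{D},\overline{\mathbb{K}}}^\op$ unfolds to $\funct{\db{\lcs{D},\overline{\mathbb{K}}}}{\overline{\mathbb{K}}}$, whose objects are the copresheaves on $\db{\lcs{D},\overline{\mathbb{K}}}$—and the routine but essential check that the L-convex-set isomorphism descends to an isometry rather than a merely nonexpansive bijection. This latter step is the one I would regard as the main (if modest) obstacle.
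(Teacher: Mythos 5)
Your proposal is correct and follows exactly the route the paper intends: the paper offers no explicit proof, asserting the corollary is immediate from Theorem~\ref{mainthm_ob}, and you supply precisely the missing bookkeeping — the observation that $([\mathcal{A},\overline{\mathbb{K}}]_0,d_{[\mathcal{A},\overline{\mathbb{K}}]_0})$ \emph{is} the functor category $\funct{\mathcal{A}}{\overline{\mathbb{K}}}$, and the check that the underlying map of the L-convex-set isomorphism $\counit{\lcs{D}}$ is an isometry (not merely nonexpansive) because it relabels coordinates along the index bijection. Both your direct computation and the alternative via Proposition~\ref{iso_ffb} are valid, so nothing further is needed.
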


Let us apply Theorem \ref{mainthm_ob} to 
$\overline{\mathbb{K}}$-extended L-convex sets with 
two-element index sets and see how the shapes of 
the underlying sets relate to the distances of 
the corresponding two-point 
$\overline{\mathbb{K}}$-categories.
We obtain Fig.~\ref{fig:lcskcat}; 
in each figure, the lower half part is a 
picture of the underlying set $\lcs{D}_0$ of  
some $\overline{\mathbb{K}}$-extended 
L-convex set $\lcs{D}$, and above it we show
the $\overline{\mathbb{K}}$-category
$\db{\lcs{D},\overline{\mathbb{K}}}$
so that the number on the arrow
from a node $a$ to a node $b$ represents the distance
$d_{\db{\lcs{D},\overline{\mathbb{K}}}}(a,b)$.
In fact, these ten figures enumerate all the possible 
shapes of $\overline{\mathbb{K}}$-extended L-convex
sets with two-element index sets, and consequently also 
gives a complete classification  of 
$\overline{\mathbb{K}}$-categories with two points.

\begin{figure}[!p]
\centering
\begin{subfigure}{\wfig}
\centering
\begin{tikzpicture}
\useasboundingbox (-2,-1.935) rectangle (2,3.7);
\draw [my grid] (-2,-2) rectangle (2,2);
\draw [my grid,my coordarrow] (-1.6,0) -- (1.6,0) node[anchor=north,my cood] {$v$};
\draw [my grid,my coordarrow] (0,-1.6) -- (0,1.6) node[anchor=east,my cood] {$w$};
\draw [my frame,dashed] 
      (-1.5,-2) -- (-2,-2);
\draw [my frame]
      (-1.5,-2) -- (1.5,-2);
\draw [my frame,dashed]
      (1.5,-2) -- (2,-2); %
\draw [my frame,dashed] 
      (2,-1.5) -- (2,-2);
\draw [my frame]
      (2,-1.5) -- (2,1.5);
\draw [my frame,dashed]
      (2,1.5) -- (2,2); %
\draw [my frame,dashed] 
      (-1.5,2) -- (-2,2);
\draw [my frame]
      (-1.5,2) -- (1.5,2);
\draw [my frame,dashed]
      (1.5,2) -- (2,2); %
\draw [my frame,dashed] 
      (-2,-1.5) -- (-2,-2);
\draw [my frame]
      (-2,-1.5) -- (-2,1.5);
\draw [my frame,dashed]
      (-2,1.5) -- (-2,2); %
\begin{scope}[on background layer]
\fill [black!20!white] 
      (-2,-2) -- (2,-2) -- (2,2) -- (-2,2) -- cycle;
\end{scope}
\filldraw [black] (-2,-2) circle (\csize)
          (2,2) circle (\csize)
          (2,-2) circle (\csize)
          (-2,2) circle (\csize);
\node [thick,circle,draw] (v) at (-0.8,2.8) {$v$};
\node [thick,circle,draw] (w) at (0.8,2.8) {$w$};
\draw [->-=.5] (v) [in=210,out=150,loop] to node[above=6]
      {$0$} (v);
\draw [->-=.5] (v) to [bend right] node[below]
      {$\infty$} (w);
\draw [->-=.5] (w) to [bend right] node[above]
      {$\infty$} (v);
\draw [->-=.5] (w) [in=30,out=330,loop] to node[above=6]
      {$0$} (w);
\end{tikzpicture}
\caption{The whole set $\overline{\mathbb{K}}^2$}
\label{fig:cata}
\end{subfigure}
\begin{subfigure}{\wfig}
\centering
\begin{tikzpicture}
\useasboundingbox (-2,-2) rectangle (2,3.7);
\draw [my grid] (-2,-2) rectangle (2,2);
\draw [my grid,my coordarrow] (-1.6,0) -- (1.6,0) node[anchor=north,my cood] {$v$};
\draw [my grid,my coordarrow] (0,-1.6) -- (0,1.6) node[anchor=east,my cood] {$w$};
\newcommand{\Da}{1.0}
\draw [my frame,dashed] (-1.5,-\Da) .. 
      controls (-1.8,-\Da-0.3) 
      and (-2,-\Da-0.5) .. 
      (-2,-2);
\draw [my frame] (-1.5,-\Da) -- (\Da,1.5);
\draw [my frame,dashed] (\Da,1.5) .. 
      controls (\Da+0.3,1.8) 
      and (\Da+0.5,2) .. 
      (2,2);
\draw [my frame,dashed] 
      (-1.5,-2) -- (-2,-2);
\draw [my frame]
      (-1.5,-2) -- (1.5,-2);
\draw [my frame,dashed]
      (1.5,-2) -- (2,-2);
\draw [my frame,dashed] 
      (2,-1.5) -- (2,-2);
\draw [my frame]
      (2,-1.5) -- (2,1.5);
\draw [my frame,dashed]
      (2,1.5) -- (2,2);
\begin{scope}[on background layer]
\fill [black!20!white] 
      (-2,-2) ..controls(-2,-\Da-0.5)and(-1.8,-\Da-0.3).. 
      (-1.5,-\Da) -- 
      (\Da,1.5) ..controls(\Da+0.3,1.8)and(\Da+0.5,2).. 
      (2,2) --
      (2,-2) --
      (-2,-2);
\end{scope}
\filldraw [black] (-2,-2) circle (\csize)
          (2,2) circle (\csize)
          (2,-2) circle (\csize);
\node at (0,1.5-\Da)[left] {$s$};
\node at (-1.65+\Da,-0.05)[above] {$-s$};
\node [thick,circle,draw] (v) at (-0.8,2.8) {$v$};
\node [thick,circle,draw] (w) at (0.8,2.8) {$w$};
\draw [->-=.5] (v) [in=210,out=150,loop] to node[above=6]
      {$0$} (v);
\draw [->-=.5] (v) to [bend right] node[below]
      {$s$} (w);
\draw [->-=.5] (w) to [bend right] node[above]
      {$\infty$} (v);
\draw [->-=.5] (w) [in=30,out=330,loop] to node[above=6]
      {$0$} (w);
\end{tikzpicture}
\caption{A half plane}
\label{fig:catb}
\end{subfigure}
\begin{subfigure}{\wfig}
\centering
\begin{tikzpicture}
\useasboundingbox (-2,-2) rectangle (2,3.7);
\draw [my grid] (-2,-2) rectangle (2,2);
\draw [my grid,my coordarrow] (-1.6,0) -- (1.6,0) node[anchor=north,my cood] {$v$};
\draw [my grid,my coordarrow] (0,-1.6) -- (0,1.6) node[anchor=east,my cood] {$w$};
\newcommand{\Da}{1.0}
\draw [my frame,dashed] (-1.5,-\Da) .. 
      controls (-1.8,-\Da-0.3) 
      and (-2,-\Da-0.5) .. 
      (-2,-2);
\draw [my frame] (-1.5,-\Da) -- (\Da,1.5);
\draw [my frame,dashed] (\Da,1.5) .. 
      controls (\Da+0.3,1.8) 
      and (\Da+0.5,2) .. 
      (2,2);
\newcommand{\Ea}{0.4}
\draw [my frame,dashed] (-\Ea,-1.5) ..
      controls (-\Ea-0.3,-1.8)
      and (-\Ea-0.5,-2) ..
      (-2,-2);
\draw [my frame] (-\Ea,-1.5) -- (1.5,\Ea);
\draw [my frame,dashed] (1.5,\Ea) ..
      controls (1.8,\Ea+0.3)
      and (2,\Ea+0.5) ..
      (2,2);
\begin{scope}[on background layer]
\fill [black!20!white] 
      (-2,-2) ..controls(-2,-\Da-0.5)and(-1.8,-\Da-0.3).. 
      (-1.5,-\Da) -- 
      (\Da,1.5) ..controls(\Da+0.3,1.8)and(\Da+0.5,2).. 
      (2,2) ..controls(2,\Ea+0.5)and(1.8,\Ea+0.3)..
      (1.5,\Ea) --
      (-\Ea,-1.5) ..controls(-\Ea-0.3,-1.8)and(-\Ea-0.5,-2)..
      (-2,-2);
\end{scope}
\filldraw [black] (-2,-2) circle (\csize)
          (2,2) circle (\csize);
\node at (0,1.5-\Da)[left] {$s$};
\node at (-1.65+\Da,-0.05)[above] {$-s$};
\node at (1.5-\Ea,0)[below] {$t$};
\node at (0,-1.5+\Ea)[right] {$-t$};
\node [thick,circle,draw] (v) at (-0.8,2.8) {$v$};
\node [thick,circle,draw] (w) at (0.8,2.8) {$w$};
\draw [->-=.5] (v) [in=210,out=150,loop] to node[above=6]
      {$0$} (v);
\draw [->-=.5] (v) to [bend right] node[below]
      {$s$} (w);
\draw [->-=.5] (w) to [bend right] node[above]
      {$t$} (v);
\draw [->-=.5] (w) [in=30,out=330,loop] to node[above=6]
      {$0$} (w);
\end{tikzpicture}
\caption{A band}
\label{fig:catc}
\end{subfigure}
\begin{subfigure}{\wfigg}
\centering
\begin{tikzpicture}
\useasboundingbox (-2,-2) rectangle (2,4.5);
\draw [my grid] (-2,-2) rectangle (2,2);
\draw [my grid,my coordarrow] (-1.6,0) -- (1.6,0) node[anchor=north,my cood] {$v$};
\draw [my grid,my coordarrow] (0,-1.6) -- (0,1.6) node[anchor=east,my cood] {$w$};
\draw [my frame,dashed] 
      (-1.5,-2) -- (-2,-2);
\draw [my frame]
      (-1.5,-2) -- (1.5,-2);
\draw [my frame,dashed]
      (1.5,-2) -- (2,-2); %
\draw [my frame,dashed] 
      (2,-1.5) -- (2,-2);
\draw [my frame]
      (2,-1.5) -- (2,1.5);
\draw [my frame,dashed]
      (2,1.5) -- (2,2); %
\filldraw [black] (-2,-2) circle (\csize)
          (2,2) circle (\csize)
          (2,-2) circle (\csize);
\node [thick,circle,draw] (v) at (-0.8,2.8) {$v$};
\node [thick,circle,draw] (w) at (0.8,2.8) {$w$};
\draw [->-=.5] (v) [in=210,out=150,loop] to node[above=6]
      {$0$} (v);
\draw [->-=.5] (v) to [bend right] node[below]
      {$-\infty$} (w);
\draw [->-=.5] (w) to [bend right] node[above]
      {$\infty$} (v);
\draw [->-=.5] (w) [in=30,out=330,loop] to node[above=6]
      {$0$} (w);
\end{tikzpicture}
\caption{Two orthogonal lines}
\label{fig:catd}
\end{subfigure}
\begin{subfigure}{\wfigg}
\centering
\begin{tikzpicture}
\useasboundingbox (-2,-2) rectangle (2,4.5);
\draw [my grid] (-2,-2) rectangle (2,2);
\draw [my grid,my coordarrow] (-1.6,0) -- (1.6,0) node[anchor=north,my cood] {$v$};
\draw [my grid,my coordarrow] (0,-1.6) -- (0,1.6) node[anchor=east,my cood] {$w$};
\draw [my frame,dashed] 
      (-1.5,-2) -- (-2,-2);
\draw [my frame]
      (-1.5,-2) -- (1.5,-2);
\draw [my frame,dashed]
      (1.5,-2) -- (2,-2); %
\draw [my frame,dashed] 
      (-1.5,2) -- (-2,2);
\draw [my frame]
      (-1.5,2) -- (1.5,2);
\draw [my frame,dashed]
      (1.5,2) -- (2,2); %
\filldraw [black] (-2,-2) circle (\csize)
          (2,2) circle (\csize)
          (2,-2) circle (\csize)
          (-2,2) circle (\csize);
\node [thick,circle,draw] (v) at (-0.8,2.8) {$v$};
\node [thick,circle,draw] (w) at (0.8,2.8) {$w$};
\draw [->-=.5] (v) [in=210,out=150,loop] to node[above=6]
      {$0$} (v);
\draw [->-=.5] (v) to [bend right] node[below]
      {$\infty$} (w);
\draw [->-=.5] (w) to [bend right] node[above]
      {$\infty$} (v);
\draw [->-=.5] (w) [in=30,out=330,loop] to node[above=6]
      {$-\infty$} (w);
\end{tikzpicture}
\caption{Two parallel lines}
\label{fig:cate}
\end{subfigure}
\caption*{\phantom{$\overline{\mathbb{K}}$}}
\end{figure}

\begin{figure}[!p]
\ContinuedFloat 
\centering
\begin{subfigure}{\wfig}
\centering
\begin{tikzpicture}
\useasboundingbox (-2,-2) rectangle (2,3.7);
\draw [my grid] (-2,-2) rectangle (2,2);
\draw [my grid,my coordarrow] (-1.6,0) -- (1.6,0) node[anchor=north,my cood] {$v$};
\draw [my grid,my coordarrow] (0,-1.6) -- (0,1.6) node[anchor=east,my cood] {$w$};
\draw [my frame,dashed] 
      (-1.5,-2) -- (-2,-2);
\draw [my frame]
      (-1.5,-2) -- (1.5,-2);
\draw [my frame,dashed]
      (1.5,-2) -- (2,-2); %
\filldraw [black] (-2,-2) circle (\csize)
          (2,2) circle (\csize)
          (2,-2) circle (\csize);
\node [thick,circle,draw] (v) at (-0.8,2.8) {$v$};
\node [thick,circle,draw] (w) at (0.8,2.8) {$w$};
\draw [->-=.5] (v) [in=210,out=150,loop] to node[above=6]
      {$0$} (v);
\draw [->-=.5] (v) to [bend right] node[below]
      {$-\infty$} (w);
\draw [->-=.5] (w) to [bend right] node[above]
      {$\infty$} (v);
\draw [->-=.5] (w) [in=30,out=330,loop] to node[above=6]
      {$-\infty$} (w);
\end{tikzpicture}
\caption{A line and a point}
\label{fig:catf}
\end{subfigure}
\begin{subfigure}{\wfig}
\centering
\begin{tikzpicture}
\useasboundingbox (-2,-2) rectangle (2,3.7);
\draw [my grid] (-2,-2) rectangle (2,2);
\draw [my grid,my coordarrow] (-1.6,0) -- (1.6,0) node[anchor=north,my cood] {$v$};
\draw [my grid,my coordarrow] (0,-1.6) -- (0,1.6) node[anchor=east,my cood] {$w$};
\draw [my frame,dashed] 
      (2,-1.5) -- (2,-2);
\draw [my frame]
      (2,-1.5) -- (2,1.5);
\draw [my frame,dashed]
      (2,1.5) -- (2,2); %
\filldraw [black] (-2,-2) circle (\csize)
          (2,2) circle (\csize)
          (2,-2) circle (\csize);
\node [thick,circle,draw] (v) at (-0.8,2.8) {$v$};
\node [thick,circle,draw] (w) at (0.8,2.8) {$w$};
\draw [->-=.5] (v) [in=210,out=150,loop] to node[above=6]
      {$0$} (v);
\draw [->-=.5] (v) to [bend right] node[below]
      {$\infty$} (w);
\draw [->-=.5] (w) to [bend right] node[above]
      {$-\infty$} (v);
\draw [->-=.5] (w) [in=30,out=330,loop] to node[above=6]
      {$-\infty$} (w);
\end{tikzpicture}
\caption{A line and a point}
\label{fig:catg}
\end{subfigure}
\begin{subfigure}{\wfig}
\centering
\begin{tikzpicture}
\useasboundingbox (-2,-2) rectangle (2,3.7);
\draw [my grid] (-2,-2) rectangle (2,2);
\draw [my grid,my coordarrow] (-1.6,0) -- (1.6,0) node[anchor=north,my cood] {$v$};
\draw [my grid,my coordarrow] (0,-1.6) -- (0,1.6) node[anchor=east,my cood] {$w$};
\filldraw [black] (-2,-2) circle (\csize)
          (2,2) circle (\csize)
          (2,-2) circle (\csize)
          (-2,2) circle (\csize);
\node [thick,circle,draw] (v) at (-0.8,2.8) {$v$};
\node [thick,circle,draw] (w) at (0.8,2.8) {$w$};
\draw [->-=.5] (v) [in=210,out=150,loop] to node[above=6]
      {$-\infty$} (v);
\draw [->-=.5] (v) to [bend right] node[below]
      {$\infty$} (w);
\draw [->-=.5] (w) to [bend right] node[above]
      {$\infty$} (v);
\draw [->-=.5] (w) [in=30,out=330,loop] to node[above=6]
      {$-\infty$} (w);
\end{tikzpicture}
\caption{Four points}
\label{fig:cath}
\end{subfigure}
\begin{subfigure}{\wfigg}
\centering
\begin{tikzpicture}
\useasboundingbox (-2,-2) rectangle (2,4.5);
\draw [my grid] (-2,-2) rectangle (2,2);
\draw [my grid,my coordarrow] (-1.6,0) -- (1.6,0) node[anchor=north,my cood] {$v$};
\draw [my grid,my coordarrow] (0,-1.6) -- (0,1.6) node[anchor=east,my cood] {$w$};
\filldraw [black] (-2,-2) circle (\csize)
          (2,2) circle (\csize)
          (2,-2) circle (\csize);
\node [thick,circle,draw] (v) at (-0.8,2.8) {$v$};
\node [thick,circle,draw] (w) at (0.8,2.8) {$w$};
\draw [->-=.5] (v) [in=210,out=150,loop] to node[above=6]
      {$-\infty$} (v);
\draw [->-=.5] (v) to [bend right] node[below]
      {$-\infty$} (w);
\draw [->-=.5] (w) to [bend right] node[above]
      {$\infty$} (v);
\draw [->-=.5] (w) [in=30,out=330,loop] to node[above=6]
      {$-\infty$} (w);
\end{tikzpicture}
\caption{Three points}
\label{fig:cati}
\end{subfigure}
\begin{subfigure}{\wfigg}
\centering
\begin{tikzpicture}
\useasboundingbox (-2,-2) rectangle (2,4.5);
\draw [my grid] (-2,-2) rectangle (2,2);
\draw [my grid,my coordarrow] (-1.6,0) -- (1.6,0) node[anchor=north,my cood] {$v$};
\draw [my grid,my coordarrow] (0,-1.6) -- (0,1.6) node[anchor=east,my cood] {$w$};
\filldraw [black] (-2,-2) circle (\csize)
          (2,2) circle (\csize);
\node [thick,circle,draw] (v) at (-0.8,2.8) {$v$};
\node [thick,circle,draw] (w) at (0.8,2.8) {$w$};
\draw [->-=.5] (v) [in=210,out=150,loop] to node[above=6]
      {$-\infty$} (v);
\draw [->-=.5] (v) to [bend right] node[below]
      {$-\infty$} (w);
\draw [->-=.5] (w) to [bend right] node[above]
      {$-\infty$} (v);
\draw [->-=.5] (w) [in=30,out=330,loop] to node[above=6]
      {$-\infty$} (w);
\end{tikzpicture}
\caption{Two points}
\label{fig:catj}
\end{subfigure}
\caption{$\overline{\mathbb{K}}$-extended L-convex sets and $\overline{\mathbb{K}}$-categories ($s,t\in \mathbb{K},s+t\geq 0$)}
\label{fig:lcskcat}
\end{figure}
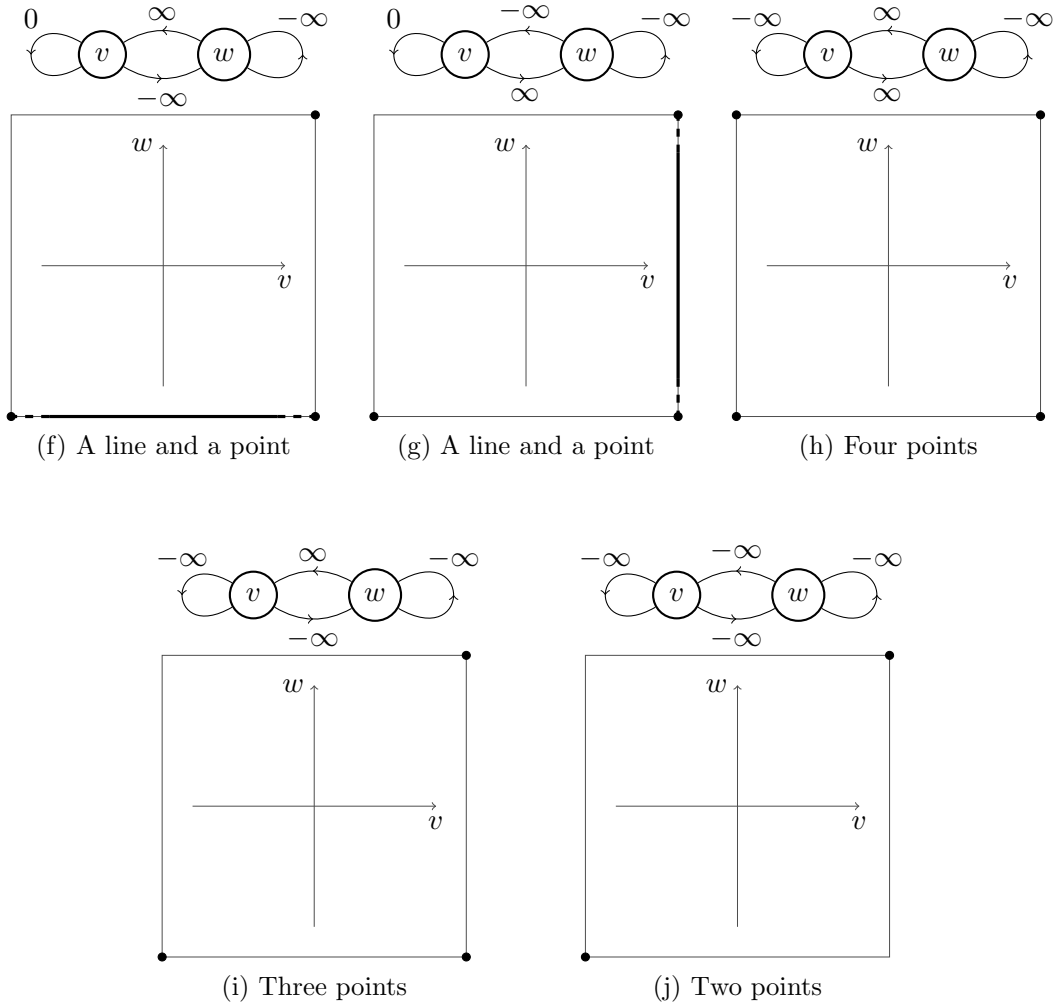
\afterpage{\clearpage}

\paragraph{}
We then show the duality for maps.
We denote the constructions for maps by the same
symbols
$[-,\overline{\mathbb{K}}]$
and 
$\db{-,\overline{\mathbb{K}}}$.
The theorem reads:
\begin{thm}\label{mainthm_map}
Let $\mathcal{A}=(\ob{\mathcal{A}},d_\mathcal{A})$ 
and $\mathcal{B}=(\ob{\mathcal{B}},d_\mathcal{B})$ 
be $\overline{\mathbb{K}}$-categories, and 
$\lcs{D}=(\ind{\lcs{D}},\lcs{D}_0)$ 
and $\lcs{E}=(\ind{\lcs{E}},\lcs{E}_0)$ 
be $\overline{\mathbb{K}}$-extended L-convex sets.
Then the following hold:
\begin{enumerate}
\item
The map 
$[-,\overline{\mathbb{K}}]\colon[\mathcal{A},\mathcal{B}]_0\longrightarrow\db{[\mathcal{B},\overline{\mathbb{K}}],[\mathcal{A},\overline{\mathbb{K}}]}_0$
is bijective.
\item
The map 
$\db{-,\overline{\mathbb{K}}}\colon\db{\lcs{D},\lcs{E}}_0\longrightarrow[\db{\lcs{E},\overline{\mathbb{K}}},\db{\lcs{D},\overline{\mathbb{K}}}]_0$
is bijective.
\end{enumerate}
\end{thm}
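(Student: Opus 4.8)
The plan is to make the two map-level constructions explicit---both are given by \emph{precomposition}---and then to prove bijectivity by treating injectivity (which is formal) separately from surjectivity, where the object-level duality of Theorem~\ref{mainthm_ob} does the real work. For (i), given a $\overline{\mathbb{K}}$-functor $F\colon\mathcal{A}\longrightarrow\mathcal{B}$, I would set $[F,\overline{\mathbb{K}}]\colon[\mathcal{B},\overline{\mathbb{K}}]\longrightarrow[\mathcal{A},\overline{\mathbb{K}}]$ to be the homomorphism whose index map is $\ob{F}\colon\ob{\mathcal{A}}\longrightarrow\ob{\mathcal{B}}$ (recall the index set of $[\mathcal{A},\overline{\mathbb{K}}]$ is $\ob{\mathcal{A}}$); its underlying map sends a nonexpansive $p\colon\mathcal{B}\longrightarrow\overline{\mathbb{K}}$ to $p\circ\ob{F}$, which is nonexpansive as a composite of $\overline{\mathbb{K}}$-functors, so the construction is well defined. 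Dually, for (ii) and a homomorphism $\Phi\colon\lcs{D}\longrightarrow\lcs{E}$, I would define $\db{\Phi,\overline{\mathbb{K}}}\colon\db{\lcs{E},\overline{\mathbb{K}}}\longrightarrow\db{\lcs{D},\overline{\mathbb{K}}}$ by $\psi\mapsto\psi\circ\Phi$; under the bijections $\pi_{\lcs{D}},\pi_{\lcs{E}}$ of Lemma~\ref{lcs2kcat_ob} this is the object map $\pi^{\lcs{E}}_w\mapsto\pi^{\lcs{D}}_{\ind{\Phi}(w)}$, and the increasing condition follows from formula~(\ref{d_dk}) since each $\Phi_0(p)=p\circ\ind{\Phi}$ lies in $\lcs{E}_0$, whence $\sup_{p\in\lcs{D}_0}\{p(\ind{\Phi}(w'))-p(\ind{\Phi}(w))\}\leq\sup_{q\in\lcs{E}_0}\{q(w')-q(w)\}$.

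Injectivity of both maps is immediate. Equality of homomorphisms is by definition equality of their index maps, and equality of $\overline{\mathbb{K}}$-functors is equality of their object maps; since $[F,\overline{\mathbb{K}}]$ remembers $\ob{F}$ and $\db{\Phi,\overline{\mathbb{K}}}$ remembers $\ind{\Phi}$ through the bijection $\pi_{\lcs{D}}$, the two constructions are injective.

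The heart of the proof is surjectivity. For (i), let $\Phi\colon[\mathcal{B},\overline{\mathbb{K}}]\longrightarrow[\mathcal{A},\overline{\mathbb{K}}]$ be an arbitrary homomorphism and put $g=\ind{\Phi}\colon\ob{\mathcal{A}}\longrightarrow\ob{\mathcal{B}}$; the homomorphism condition says exactly that $p\circ g\in[\mathcal{A},\overline{\mathbb{K}}]_0$ for every $p\in[\mathcal{B},\overline{\mathbb{K}}]_0$. I must upgrade $g$ to a $\overline{\mathbb{K}}$-functor, i.e.\ verify $d_\mathcal{A}(a,a')\geq d_\mathcal{B}(g(a),g(a'))$. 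The decisive point is that Theorem~\ref{mainthm_ob}(i) applied to $\mathcal{B}$ supplies the supremum formula $d_\mathcal{B}(g(a),g(a'))=\sup_{p\in[\mathcal{B},\overline{\mathbb{K}}]_0}\{p(g(a'))-p(g(a))\}$; for each single $p$, nonexpansiveness of $p\circ g$ gives $d_\mathcal{A}(a,a')\geq p(g(a'))-p(g(a))$, and taking the supremum over $p$ yields the required inequality. Then $g$ is the object map of a $\overline{\mathbb{K}}$-functor $F$ with $[F,\overline{\mathbb{K}}]=\Phi$.

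For (ii) I would argue dually, using the other half of the object-level duality. A $\overline{\mathbb{K}}$-functor $G\colon\db{\lcs{E},\overline{\mathbb{K}}}\longrightarrow\db{\lcs{D},\overline{\mathbb{K}}}$ has object map $\pi^{\lcs{E}}_w\mapsto\pi^{\lcs{D}}_{h(w)}$ for a unique $h\colon\ind{\lcs{E}}\longrightarrow\ind{\lcs{D}}$, and I must show $h$ defines a homomorphism $\lcs{D}\longrightarrow\lcs{E}$, i.e.\ that $p\circ h\in\lcs{E}_0$ for every $p\in\lcs{D}_0$. Rewriting the increasing condition for $G$ via~(\ref{d_dk}) gives, for each fixed $p\in\lcs{D}_0$, the chain $\sup_{q\in\lcs{E}_0}\{q(w')-q(w)\}\geq\sup_{p'\in\lcs{D}_0}\{p'(h(w'))-p'(h(w))\}\geq p(h(w'))-p(h(w))$, which says precisely that the function $\pi^{\lcs{E}}_w\mapsto(p\circ h)(w)$ is nonexpansive on $\db{\lcs{E},\overline{\mathbb{K}}}$. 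By Theorem~\ref{mainthm_ob}(ii)---namely the isomorphism $\counitinv{\lcs{E}}$ identifying $\lcs{E}_0$ with the nonexpansive functions on $\db{\lcs{E},\overline{\mathbb{K}}}$---this forces $p\circ h\in\lcs{E}_0$, so $h$ yields a homomorphism $\Phi$ with $\db{\Phi,\overline{\mathbb{K}}}=G$. The main obstacle is exactly this surjectivity: an abstract map on either side retains only the relabelling (index or object) data, and recovering the metric or order condition that makes it the image of a genuine map is not formal---it depends essentially on the explicit supremum descriptions of distances furnished by Theorem~\ref{mainthm_ob}, which is why the map-level duality can only be proved after the object-level one.
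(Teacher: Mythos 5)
Your proposal is correct, and it agrees with the paper on the constructions (precomposition on index/object maps, well-definedness exactly as in Lemmas~\ref{lem:cat2lcs_hom} and~\ref{lem:lcs2cat_hom}) and on the formal injectivity argument; where it genuinely differs is in how surjectivity is executed. The paper's proof is diagrammatic: given a homomorphism $\Phi\colon[\mathcal{B},\overline{\mathbb{K}}]\longrightarrow[\mathcal{A},\overline{\mathbb{K}}]$ it forms the composite $F_\Phi=\unit{\mathcal{B}}^{-1}\circ\db{\Phi,\overline{\mathbb{K}}}\circ\unit{\mathcal{A}}$, so that $F_\Phi$ is a $\overline{\mathbb{K}}$-functor for free (a composite of $\overline{\mathbb{K}}$-functors), and then checks by a computation of object maps that $\ind{[F_\Phi,\overline{\mathbb{K}}]}=\ind{\Phi}$; dually in (ii) it sets $\Phi_F=\counit{\lcs{E}}\circ[F,\overline{\mathbb{K}}]\circ\counit{\lcs{D}}^{-1}$. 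Thus each direction's surjectivity borrows the \emph{other} direction's map-level construction together with the object-level isomorphisms, and no inequality has to be re-verified at this stage. You instead verify directly that the natural candidate preimage satisfies the defining condition: in (i) that $\ind{\Phi}$ obeys the increasing condition, by combining nonexpansiveness of each $p\circ\ind{\Phi}$ with the supremum formula $d_\mathcal{B}(b,b')=\sup_{p\in[\mathcal{B},\overline{\mathbb{K}}]_0}\{p(b')-p(b)\}$ coming from the fully faithfulness of $\unit{\mathcal{B}}$; in (ii) that $p\circ h\in\lcs{E}_0$, by observing that $\pi_w\mapsto p(h(w))$ is nonexpansive and invoking the containment $(\counit{\lcs{E}})_0\big([\db{\lcs{E},\overline{\mathbb{K}}},\overline{\mathbb{K}}]_0\big)\subseteq\lcs{E}_0$ established in the proof of Theorem~\ref{mainthm_ob}(ii). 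This redistributes the inequality work that the paper isolated in its two lemmas: your (i) never needs $\db{-,\overline{\mathbb{K}}}$ and your (ii) never needs $[-,\overline{\mathbb{K}}]$, at the cost of repeating, in unfolded form, essentially the same estimates; the paper's route is shorter once the lemmas are in place and makes the functoriality of the whole correspondence more visible. Note also that both routes rely not just on the statement of Theorem~\ref{mainthm_ob} but on the explicit isomorphisms constructed in its proof. One small slip: the identification you need in (ii) is the homomorphism property of $\counit{\lcs{E}}$ (nonexpansive functions on $\db{\lcs{E},\overline{\mathbb{K}}}$ land in $\lcs{E}_0$ after reindexing along $w\mapsto\pi_w$), not of $\counitinv{\lcs{E}}$; since the two are mutually inverse this is immaterial, but the direction quoted should be corrected.
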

Note that the directions of maps reverse
when we apply the constructions.
The details of these constructions are 
given by the following lemmas:

\begin{lem}\label{lem:cat2lcs_hom}
Let $\mathcal{A}=(\ob{\mathcal{A}},d_\mathcal{A})$ 
and $\mathcal{B}=(\ob{\mathcal{B}},d_\mathcal{B})$ 
be $\overline{\mathbb{K}}$-categories and 
\[
F\colon \mathcal{A}\longrightarrow \mathcal{B}
\]
be a $\overline{\mathbb{K}}$-functor.
Then there is a homomorphism of 
$\overline{\mathbb{K}}$-extended L-convex sets 
\[
[F,\overline{\mathbb{K}}]\colon [\mathcal{B},\overline{\mathbb{K}}]\longrightarrow[\mathcal{A},\overline{\mathbb{K}}],
\]
defined as $\ind{[F,\overline{\mathbb{K}}]}=\ob{F}$.
\end{lem}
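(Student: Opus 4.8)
The plan is to verify directly that the assignment $\ind{[F,\overline{\mathbb{K}}]}=\ob{F}$ meets the definition of a homomorphism. First I would confirm the directions match: the domain $[\mathcal{B},\overline{\mathbb{K}}]$ has index set $\ob{\mathcal{B}}$ and the codomain $[\mathcal{A},\overline{\mathbb{K}}]$ has index set $\ob{\mathcal{A}}$, and since a homomorphism is specified by a map of index sets running \emph{opposite} to the homomorphism itself, the required data is a map $\ob{\mathcal{A}}\longrightarrow\ob{\mathcal{B}}$; the object-map $\ob{F}$ is precisely such a map.

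Next I would unwind the homomorphism condition. For this it has to be shown that for every $p$ in the underlying set $[\mathcal{B},\overline{\mathbb{K}}]_0$ --- equivalently, every nonexpansive map $p\colon\mathcal{B}\longrightarrow\overline{\mathbb{K}}$ --- the composite
\[
[F,\overline{\mathbb{K}}]_0(p)=p\circ\ob{F}\colon\ob{\mathcal{A}}\longrightarrow\mathbb{K}\cup\{-\infty,\infty\}
\]
belongs to $[\mathcal{A},\overline{\mathbb{K}}]_0$, that is, is again nonexpansive as a map $\mathcal{A}\longrightarrow\overline{\mathbb{K}}$.

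The key point is that this is nothing but an instance of the closure of $\overline{\mathbb{K}}$-functors under composition established earlier: $F$ and $p$ are both $\overline{\mathbb{K}}$-functors, hence so is $p\circ F$. Explicitly, I would chain the increasing conditions for $F$ and for $p$, obtaining
\[
d_\mathcal{A}(a,a')\geq d_\mathcal{B}\big(F(a),F(a')\big)\geq p\big(F(a')\big)-p\big(F(a)\big)
\]
for all $a,a'\in\ob{\mathcal{A}}$, where the second inequality is the increasing condition for $p$ applied at the points $F(a),F(a')$ of $\mathcal{B}$. As $d_{\overline{\mathbb{K}}}(x,y)=y-x$, the two ends of this chain are exactly the increasing condition for $p\circ\ob{F}$, so membership in $[\mathcal{A},\overline{\mathbb{K}}]_0$ follows.

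I expect no real obstacle here: the only points demanding attention are the contravariant bookkeeping on index sets and the recognition that the required membership is precisely the composition-closure of $\overline{\mathbb{K}}$-functors, which is already in hand.
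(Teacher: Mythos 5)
Your proposal is correct and follows essentially the same route as the paper's proof: first the type check that a homomorphism $[\mathcal{B},\overline{\mathbb{K}}]\longrightarrow[\mathcal{A},\overline{\mathbb{K}}]$ is given by a map $\ob{\mathcal{A}}\longrightarrow\ob{\mathcal{B}}$ of index sets, then the observation that $p\circ\ob{F}$ is nonexpansive because it is a composite of nonexpansive maps. Your explicit chaining of the two increasing conditions merely spells out the composition-closure proposition that the paper invokes in one line.
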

\begin{proof}
Since 
$[\mathcal{B},\overline{\mathbb{K}}]=(\ob{\mathcal{B}},[\mathcal{B},\overline{\mathbb{K}}]_0)$ and
$[\mathcal{A},\overline{\mathbb{K}}]=(\ob{\mathcal{A}},[\mathcal{A},\overline{\mathbb{K}}]_0)$,
a homomorphism from $[\mathcal{B},\overline{\mathbb{K}}]$
to $[\mathcal{A},\overline{\mathbb{K}}]$ is given 
as a map from
$\ind{[\mathcal{A},\overline{\mathbb{K}}]}=\ob{\mathcal{A}}$ to 
$\ind{[\mathcal{B},\overline{\mathbb{K}}]}=\ob{\mathcal{B}}$,
such as $\ob{F}$;
so at least the type matches.
What we have to check is that 
for all $p\in[\mathcal{B},\overline{\mathbb{K}}]_0$,
\begin{align}\label{nonexp_comp}
	p\circ \ob{F}\colon \ob{\mathcal{A}}\overset{\ob{F}}{\longrightarrow} \ob{\mathcal{B}}\overset{p}{\longrightarrow} \mathbb{K}\cup\{-\infty,\infty\}
\end{align}
is an element of 
$[\mathcal{A},\overline{\mathbb{K}}]_0$, i.e., 
nonexpansive.
But (\ref{nonexp_comp}) is a composition of 
nonexpansive maps and so nonexpansive as well.
\end{proof}

\begin{lem}\label{lem:lcs2cat_hom}
Let $\lcs{D}=(\ind{\lcs{D}},\lcs{D}_0)$ 
and $\lcs{E}=(\ind{\lcs{E}},\lcs{E}_0)$ 
be $\overline{\mathbb{K}}$-extended L-convex sets and 
\[
\Phi\colon \lcs{D}\longrightarrow \lcs{E}
\]
be a homomorphism.
Then there is a $\overline{\mathbb{K}}$-functor 
\[
\db{\Phi,\overline{\mathbb{K}}}\colon \db{\lcs{E},\overline{\mathbb{K}}}\longrightarrow\db{\lcs{D},\overline{\mathbb{K}}},
\]
defined as  
$\ob{\db{\Phi,\overline{\mathbb{K}}}}=\pi_\lcs{D}\circ \ind{\Phi}\circ \pi_\lcs{E}^{-1}$, 
where $\pi_\lcs{D}$ and $\pi_\lcs{E}$ are the bijections 
defined in the proof of Lemma \ref{lcs2kcat_ob}.
\end{lem}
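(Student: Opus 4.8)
The plan is to unwind the definition of the object map and then verify the single increasing condition required of a $\overline{\mathbb{K}}$-functor. First I would observe that, by Lemma~\ref{lcs2kcat_ob}, the objects of $\db{\lcs{E},\overline{\mathbb{K}}}$ and $\db{\lcs{D},\overline{\mathbb{K}}}$ are exactly $\{\pi_w\}_{w\in\ind{\lcs{E}}}$ and $\{\pi_v\}_{v\in\ind{\lcs{D}}}$, so that composing the three maps $\pi_\lcs{E}^{-1}$, $\ind{\Phi}$, $\pi_\lcs{D}$ yields the explicit formula $\ob{\db{\Phi,\overline{\mathbb{K}}}}(\pi_w)=\pi_{\ind{\Phi}(w)}$ for each $w\in\ind{\lcs{E}}$. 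Since $\ind{\Phi}\colon\ind{\lcs{E}}\longrightarrow\ind{\lcs{D}}$ is a genuine map, this is a well-defined function between the two object sets, so the type matches and it remains only to check the increasing condition.

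Next I would write out the increasing condition using the distance formula~(\ref{d_dk}) established in the proof of Lemma~\ref{lcs2kcat_ob}. For $w,w'\in\ind{\lcs{E}}$ the condition
\[
d_{\db{\lcs{E},\overline{\mathbb{K}}}}(\pi_w,\pi_{w'})\geq d_{\db{\lcs{D},\overline{\mathbb{K}}}}(\pi_{\ind{\Phi}(w)},\pi_{\ind{\Phi}(w')})
\]
becomes, after substituting the two suprema,
\[
\sup_{q\in\lcs{E}_0}\{q(w')-q(w)\}\geq\sup_{p\in\lcs{D}_0}\{p(\ind{\Phi}(w'))-p(\ind{\Phi}(w))\}.
\]
Here $\geq$ is the numerical order, i.e.\ the poset order on $\overline{\mathbb{K}}$, and both sides are ordinary suprema exactly as in Lemma~\ref{lcs2kcat_ob}.

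The crux is the defining property of a homomorphism: for every $p\in\lcs{D}_0$ the composite $q:=p\circ\ind{\Phi}$ lies in $\lcs{E}_0$. For such $q$ we have $q(w)=p(\ind{\Phi}(w))$ and $q(w')=p(\ind{\Phi}(w'))$, whence $p(\ind{\Phi}(w'))-p(\ind{\Phi}(w))=q(w')-q(w)$, a value already occurring among the terms of the left-hand supremum. Thus each term of the right-hand supremum is bounded above by the left-hand supremum, and taking the supremum over $p\in\lcs{D}_0$ gives the desired inequality. I expect no genuine obstacle here: the only points requiring attention are keeping track of the reversed order $\geq$ so that the inequality points the right way, and noting that the whole argument is carried out termwise, so that the extended subtraction of Example~\ref{ex:overlineK} and the cases where coordinates equal $\pm\infty$ need no separate treatment.
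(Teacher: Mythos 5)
Your proposal is correct and takes essentially the same route as the paper's own proof: identify the object map as $\pi_w\mapsto\pi_{\ind{\Phi}(w)}$, rewrite the increasing condition via the sup-formula (\ref{d_dk}), and invoke the homomorphism property that $p\circ\ind{\Phi}\in\lcs{E}_0$ for every $p\in\lcs{D}_0$, so the right-hand supremum ranges over a subset of the values appearing in the left-hand one. The paper phrases this last step as a set inclusion while you argue termwise, but the argument is identical.
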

\begin{proof}
Since $\db{\lcs{E},\overline{\mathbb{K}}}=
\big(\db{\lcs{E},\overline{\mathbb{K}}}_0,d_\db{\lcs{E},\overline{\mathbb{K}}}\big)$ 
and
$\db{\lcs{D},\overline{\mathbb{K}}}=
\big(\db{\lcs{D},\overline{\mathbb{K}}}_0,d_\db{\lcs{D},\overline{\mathbb{K}}}\big)$,
a $\overline{\mathbb{K}}$-functor from 
$\db{\lcs{E},\overline{\mathbb{K}}}$ to
$\db{\lcs{D},\overline{\mathbb{K}}}$ is given as a 
map from 
$\ob{\db{\lcs{E},\overline{\mathbb{K}}}}=\db{\lcs{E},\overline{\mathbb{K}}}_0$ to
$\ob{\db{\lcs{D},\overline{\mathbb{K}}}}=\db{\lcs{D},\overline{\mathbb{K}}}_0$, 
such as 
$\ob{\db{\Phi,\overline{\mathbb{K}}}}=\pi_\lcs{D}\circ \ind{\Phi}\circ \pi_\lcs{E}^{-1}$:
\begin{center}
\begin{tikzpicture}
\node (P1) at (3,2) {$\db{\lcs{E},\overline{\mathbb{K}}}_0$} ;
\node (P2) at (3,0) {$\db{\lcs{D},\overline{\mathbb{K}}}_0$};
\node (P3) at (0,2) {$\ind{\lcs{E}}$};
\node (P4) at (0,0) {$\ind{\lcs{D}}$};
\draw
(P1) edge[my arrow] node[right] {$\ob{\db{\Phi,\overline{\mathbb{K}}}}$} (P2)
(P3) edge[my arrow] node[above] {$\pi_\lcs{E}$} (P1)
(P4) edge[my arrow] node[below] {$\pi_\lcs{D}$} (P2)
(P3) edge[my arrow] node[left] {$\ind{\Phi}$} (P4);
\end{tikzpicture}
\end{center}
We adopt the notation
$\db{\lcs{E},\overline{\mathbb{K}}}_0=\{\pi_w\}_{w\in \ind{\lcs{E}}}$
and 
$\db{\lcs{D},\overline{\mathbb{K}}}_0=\{\pi_v\}_{v\in \ind{\lcs{D}}}$;
then 
\[
\ob{\db{\Phi,\overline{\mathbb{K}}}}(\pi_w)=\pi_{\ind{\Phi}(w)}\quad (\forall \pi_w\in \{\pi_w\}_{w\in \ind{\lcs{E}}}).
\]
We aim to show that 
\[
d_\db{\lcs{E},\overline{\mathbb{K}}}(\pi_w,\pi_{w^\prime})
\geq d_\db{\lcs{D},\overline{\mathbb{K}}}(\pi_{\ind{\Phi}(w)},\pi_{\ind{\Phi}(w^\prime)})\quad (\forall \pi_w,\pi_{w^\prime}\in \{\pi_w\}_{w\in \ind{\lcs{E}}}),
\]
or equivalently 
(by (\ref{d_dk}) in the proof of Lemma 
\ref{lcs2kcat_ob}),
\[
\sup_{q\in \lcs{E}_0}\{q(w^\prime)-q(w)\}
\geq \sup_{p\in \lcs{D}_0} \left\{p\circ\ind{\Phi}(w^\prime)-p\circ\ind{\Phi}(w)\right\}\quad (\forall w,w^\prime\in \ind{\lcs{E}}).
\]
Now recall that the condition for $\ind{\Phi}$ to
define a homomorphism is that for all $p\in \lcs{D}_0$,
$p\circ\ind{\Phi}\in \lcs{E}_0$; hence
\[
\{q(w^\prime)-q(w)\mid q\in \lcs{E}_0\}\supseteq 
\left\{p\circ\ind{\Phi}(w^\prime)-p\circ\ind{\Phi}(w)\mid p\in \lcs{D}_0\right\}\quad (\forall w,w^\prime\in \ind{\lcs{E}})
\]
and the proof is done.
\end{proof}

Theorem \ref{mainthm_map} is proved as follows:

\begin{proof}[Proof of Theorem~\ref{mainthm_map}]
\begin{description}[font=\normalfont]
\item[[(i)\!\!\!]]
Since for each $\overline{\mathbb{K}}$-functor 
$F\colon\mathcal{A}\longrightarrow\mathcal{B}$,
$[F,\overline{\mathbb{K}}]$ is defined as
\[
\ind{[F,\overline{\mathbb{K}}]}=\ob{F},
\]
$[-,\overline{\mathbb{K}}]$ is clearly injective.

To prove surjectivity of $[-,\overline{\mathbb{K}}]$,
take an arbitrary homomorphism
\[
\Phi\colon[\mathcal{B},\overline{\mathbb{K}}]\longrightarrow[\mathcal{A},\overline{\mathbb{K}}].
\]
Let 
$F_\Phi=\unit{\mathcal{B}}^{-1}\circ\db{\Phi,\overline{\mathbb{K}}}\circ\unit{\mathcal{A}}$,
where $\overline{\mathbb{K}}$-functors
$\unit{\mathcal{A}}$ and $\unit{\mathcal{B}}$ are 
the isomorphisms defined in the proof of 
Theorem \ref{mainthm_ob} (i):
\begin{center}
\begin{tikzpicture}
\node (P1) at (0,2) {$\mathcal{A}$} ;
\node (P2) at (3,2) {$\db{[\mathcal{A},\overline{\mathbb{K}}],\overline{\mathbb{K}}}$};
\node (P3) at (0,0) {$\mathcal{B}$};
\node (P4) at (3,0) {$\db{[\mathcal{B},\overline{\mathbb{K}}],\overline{\mathbb{K}}}$};
\draw
(P1) edge[my arrow] node[above] {$\unit{\mathcal{A}}$} (P2)
(P1) edge[my arrow] node[left] {$F_\Phi$} (P3)
(P2) edge[my arrow] node[right] {$\db{\Phi,\overline{\mathbb{K}}}$} (P4)
(P3) edge[my arrow] node[below] {$\unit{\mathcal{B}}$} (P4);
\end{tikzpicture}
\end{center}
We claim that $[F_\Phi,\overline{\mathbb{K}}]=\Phi$.
Recall
$\ob{\db{[\mathcal{A},\overline{\mathbb{K}}],\overline{\mathbb{K}}}}=\db{[\mathcal{A},\overline{\mathbb{K}}],\overline{\mathbb{K}}}_0=\{\pi_a\}_{a\in\ob{\mathcal{A}}}$
and
\[
\ob{\unit{\mathcal{A}}}=\pi_{[\mathcal{A},\overline{\mathbb{K}}]}=\lambda a\in\ob{\mathcal{A}}.\,\pi_a \colon\ob{\mathcal{A}}\longrightarrow\ob{\db{[\mathcal{A},\overline{\mathbb{K}}],\overline{\mathbb{K}}}}.
\]
Now we can conclude that
\begin{align*}
\ind{[F_\Phi,\overline{\mathbb{K}}]}&=\ob{F_\Phi}\\
&=\ob{\unit{\mathcal{B}}}^{-1}\circ\ob{\db{\Phi,\overline{\mathbb{K}}}}\circ\ob{\unit{\mathcal{A}}}\\
&=\ob{\unit{\mathcal{B}}}^{-1}\circ\pi_{[\mathcal{B},\overline{\mathbb{K}}]}\circ\ind{\Phi}\circ\pi_{[\mathcal{A},\overline{\mathbb{K}}]}^{-1}\circ\ob{\unit{\mathcal{A}}}\\
&=1_{\ob{\mathcal{B}}}\circ\ind{\Phi}\circ1_{\ob{\mathcal{A}}}\\
&=\ind{\Phi},
\end{align*}
hence $[F_\Phi,\overline{\mathbb{K}}]=\Phi$.
\item[[(ii)\!\!\!]]
Since for each homomorphism 
$\Phi\colon\lcs{D}\longrightarrow\lcs{E}$,
$\db{\Phi,\overline{\mathbb{K}}}$ is defined as
\[
\ob{\db{\Phi,\overline{\mathbb{K}}}}=\pi_\lcs{D}\circ\ind{\Phi}\circ\pi_\lcs{E}^{-1},
\]
where $\pi_\lcs{D}$ and $\pi_\lcs{E}$ are fixed 
bijections, 
$\db{-,\overline{\mathbb{K}}}$ is clearly injective.

To prove surjectivity of $\db{-,\overline{\mathbb{K}}}$,
take an arbitrary $\overline{\mathbb{K}}$-functor
\[
F\colon\db{\lcs{E},\overline{\mathbb{K}}}\longrightarrow\db{\lcs{D},\overline{\mathbb{K}}}.
\]
Let 
$\Phi_F=\counit{\lcs{E}}\circ[F,\overline{\mathbb{K}}]\circ\counit{\lcs{D}}^{-1}$,
or equivalently,
$\ind{\Phi_F}=\ind{\counit{\lcs{D}}}^{-1}\circ\ind{[F,\overline{\mathbb{K}}]}\circ\ind{\counit{\lcs{E}}}$,
where homomorphisms
$\counit{\lcs{D}}$ and $\counit{\lcs{E}}$ are 
the isomorphisms defined in the proof of 
Theorem \ref{mainthm_ob} (ii):
\begin{center}
\begin{tikzpicture}
\node (P1) at (0,2) {$[\db{\lcs{D},\overline{\mathbb{K}}},\overline{\mathbb{K}}]$} ;
\node (P2) at (3,2) {$\lcs{D}$};
\node (P3) at (0,0) {$[\db{\lcs{E},\overline{\mathbb{K}}},\overline{\mathbb{K}}]$};
\node (P4) at (3,0) {$\lcs{E}$};
\draw
(P1) edge[my arrow] node[above] {$\counit{\lcs{D}}$} (P2)
(P1) edge[my arrow] node[left] {$[F,\overline{\mathbb{K}}]$} (P3)
(P2) edge[my arrow] node[right] {$\Phi_F$} (P4)
(P3) edge[my arrow] node[below] {$\counit{\lcs{E}}$} (P4);
\end{tikzpicture}
\end{center}
We claim that $\db{\Phi_F,\overline{\mathbb{K}}}=F$.
Recall 
$\ind{[\db{\lcs{D},\overline{\mathbb{K}}},\overline{\mathbb{K}}]}=\ob{\db{\lcs{D},\overline{\mathbb{K}}}}=\db{\lcs{D},\overline{\mathbb{K}}}_0=\{\pi_v\}_{v\in\ind{\lcs{D}}}$
and
\begin{align*}
\ind{\counit{\lcs{D}}}=\pi_\lcs{D}=\lambda v\in\ind{\lcs{D}}.\,\pi_v\colon \ind{\lcs{D}}\longrightarrow \ind{[\db{\lcs{D},\overline{\mathbb{K}}},\overline{\mathbb{K}}]}.
\end{align*}
Now we can conclude that
\begin{align*}
\ob{\db{\Phi_F,\overline{\mathbb{K}}}}&=
\pi_\lcs{D}\circ\ind{\Phi_F}\circ\pi_\lcs{E}^{-1}\\
&=\pi_\lcs{D}\circ\ind{\counit{\lcs{D}}}^{-1}\circ\ind{[F,\overline{\mathbb{K}}]}\circ\ind{\counit{\lcs{E}}}\circ\pi_\lcs{E}^{-1}\\
&=1_{\ind{[\db{\lcs{D},\overline{\mathbb{K}}},\overline{\mathbb{K}}]}}\circ\ind{[F,\overline{\mathbb{K}}]}\circ 1_{\ind{[\db{\lcs{E},\overline{\mathbb{K}}},\overline{\mathbb{K}}]}}\\
&=\ind{[F,\overline{\mathbb{K}}]}\\
&=\ob{F},
\end{align*}
hence $\db{\Phi_F,\overline{\mathbb{K}}}=F$.
\end{description}
\end{proof}

Finally, the duality for canonical orderings:

\begin{thm}\label{mainthm_canord}
Let $\mathcal{A}=(\ob{\mathcal{A}},d_\mathcal{A})$ and
$\mathcal{B}=(\ob{\mathcal{B}},d_\mathcal{B})$ be 
$\overline{\mathbb{K}}$-categories,
$\lcs{D}=(\ind{\lcs{D}},\lcs{D}_0)$ and 
$\lcs{E}=(\ind{\lcs{E}},\lcs{E}_0)$ be 
$\overline{\mathbb{K}}$-extended L-convex sets,
$F,G\colon\mathcal{A}\longrightarrow\mathcal{B}$ be
$\overline{\mathbb{K}}$-functors, and
$\Phi,\Psi\colon\lcs{D}\longrightarrow\lcs{E}$ be
homomorphisms.
Then the following hold:
\begin{enumerate}
\item
$F\Rightarrow G$ if and only if
$[F,\overline{\mathbb{K}}]\Rightarrow[G,\overline{\mathbb{K}}]$.
\item
$\Phi\Rightarrow\Psi$ if and only if 
$\db{\Phi,\overline{\mathbb{K}}}\Rightarrow\db{\Psi,\overline{\mathbb{K}}}$.
\end{enumerate}
\end{thm}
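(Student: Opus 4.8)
The plan is to prove both biconditionals by direct computation: I unfold the definition of the canonical ordering on each side, insert the explicit descriptions of the two constructions, and close the gap with the increasing condition and the adjointness relation of $\overline{\mathbb{K}}$.

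For part (i), recall that $[F,\overline{\mathbb{K}}]_0(p) = p\circ\ob{F}$ for every $p\in[\mathcal{B},\overline{\mathbb{K}}]_0$, so that $[F,\overline{\mathbb{K}}]\Rightarrow[G,\overline{\mathbb{K}}]$ unwinds to the assertion that $p(F(a))\geq p(G(a))$ for all $a\in\ob{\mathcal{A}}$ and all nonexpansive $p\colon\mathcal{B}\to\overline{\mathbb{K}}$. For the forward implication I would assume $0\geq d_\mathcal{B}(F(a),G(a))$, invoke the increasing condition for such a $p$ in the form $d_\mathcal{B}(F(a),G(a))\geq p(G(a))-p(F(a))$, chain the two inequalities to get $0\geq p(G(a))-p(F(a))$, and finally apply the adjointness relation (with unit $e=0$) to rewrite this as $p(F(a))\geq p(G(a))$. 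For the converse, the decisive move is to probe with a single well-chosen presheaf: I would take the co-Yoneda image $p=\overline{Y}(F(a))$, the ``distance from $F(a)$'' copresheaf, which lies in $[\mathcal{B},\overline{\mathbb{K}}]_0$. The hypothesis then yields $d_\mathcal{B}(F(a),F(a))\geq d_\mathcal{B}(F(a),G(a))$, and combining this with the identity law $0\geq d_\mathcal{B}(F(a),F(a))$ by transitivity gives $0\geq d_\mathcal{B}(F(a),G(a))$, which is exactly $F\Rightarrow G$.

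For part (ii), I would first unwind $\db{\Phi,\overline{\mathbb{K}}}\Rightarrow\db{\Psi,\overline{\mathbb{K}}}$. Since $\ob{\db{\Phi,\overline{\mathbb{K}}}}(\pi_w)=\pi_{\ind{\Phi}(w)}$ and likewise for $\Psi$, this relation reads $0\geq d_{\db{\lcs{D},\overline{\mathbb{K}}}}(\pi_{\ind{\Phi}(w)},\pi_{\ind{\Psi}(w)})$ for all $w\in\ind{\lcs{E}}$; by the explicit distance formula (\ref{d_dk}) this becomes $0\geq\sup_{p\in\lcs{D}_0}\{p(\ind{\Psi}(w))-p(\ind{\Phi}(w))\}$. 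Because the supremum is a least upper bound in the usual order, $0$ dominates it precisely when it dominates each term, so the condition is equivalent to $0\geq p(\ind{\Psi}(w))-p(\ind{\Phi}(w))$ for all $p$ and $w$; one further use of adjointness turns this into $p(\ind{\Phi}(w))\geq p(\ind{\Psi}(w))$. Since $\Phi_0(p)=p\circ\ind{\Phi}$ and $\Psi_0(p)=p\circ\ind{\Psi}$, this last statement is exactly the unfolding of $\Phi\Rightarrow\Psi$, so the chain of equivalences delivers the biconditional.

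Both arguments are largely bookkeeping once the constructions and (\ref{d_dk}) are in hand; the one genuinely non-mechanical step is the converse of part (i), where the correct test presheaf must be supplied. I expect the main subtlety there to be the possibility that $d_\mathcal{B}(F(a),F(a))=-\infty$ rather than $0$, but this causes no trouble: the identity law guarantees $0\geq d_\mathcal{B}(F(a),F(a))$ in either case, so transitivity still yields the desired inequality. In part (ii) the only point requiring care is the interchange between $\sup\leq 0$ and the pointwise inequalities, which is valid precisely because the supremum is taken in the usual order (our $\bigwedge$ in the complete lattice $\overline{\mathbb{K}}$).
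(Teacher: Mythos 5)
Your proof is correct, but it reaches the two ``if'' directions by a genuinely different route than the paper. The paper only proves the two ``only if'' implications directly (its argument for (i) packages your increasing-condition computation into ``$\Rightarrow$ is reflexive and preserved under composition,'' and its argument for (ii) is your inequality chain read in one direction); it then gets each ``if'' implication by a roundtrip through the other part: from $[F,\overline{\mathbb{K}}]\Rightarrow[G,\overline{\mathbb{K}}]$ it deduces $\db{[F,\overline{\mathbb{K}}],\overline{\mathbb{K}}}\Rightarrow\db{[G,\overline{\mathbb{K}}],\overline{\mathbb{K}}}$ by the ``only if'' part of (ii), and transports this back to $F\Rightarrow G$ along the isomorphisms $\unit{\mathcal{A}},\unit{\mathcal{B}}$ of Theorem~\ref{mainthm_ob} via the commutative diagrams of Theorem~\ref{mainthm_map} (and symmetrically for (ii), using $\counit{\lcs{D}},\counit{\lcs{E}}$). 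You instead make each part self-contained: in (ii) you observe that every step of the computation is reversible, so one chain of equivalences yields the biconditional, and in (i) you reflect the ordering by probing with the co-Yoneda element $\overline{Y}(F(a))\in[\mathcal{B},\overline{\mathbb{K}}]_0$ (a copresheaf, strictly speaking, not a presheaf) together with the identity law $0\geq d_\mathcal{B}(F(a),F(a))$ --- the same device the paper itself uses inside the proof of Theorem~\ref{mainthm_ob}(i). The trade-off: your argument is more elementary and does not depend on Theorems~\ref{mainthm_ob} and~\ref{mainthm_map} at all, whereas the paper's derivation exhibits order-reflection as a formal consequence of the already-established equivalence (orderings transported along unit/counit isomorphisms), which is precisely the 2-equivalence structure it wants to emphasize. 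Your treatment of the edge cases --- the possibility $d_\mathcal{B}(F(a),F(a))=-\infty$, and the equivalence between $0\geq\sup S$ and $0\geq s$ for all $s\in S$ --- is also sound, so I see no gap.
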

\begin{proof}
\begin{description}[font=\normalfont]
\item[[(i), the ``only if'' part\!\!\!]]
Since 
$[F,\overline{\mathbb{K}}],[G,\overline{\mathbb{K}}]\colon[\mathcal{B},\overline{\mathbb{K}}]\longrightarrow[\mathcal{A},\overline{\mathbb{K}}]$,
we aim to prove that for all 
$q\in[\mathcal{B},\overline{\mathbb{K}}]_0$,
\[
q\circ \ob{F}(a)\geq q\circ \ob{G}(a) \quad(\forall a\in\ob{\mathcal{A}}).
\]
In fact, when we view $q$ as a
$\overline{\mathbb{K}}$-functor, it is equivalent to
$q\circ F\Rightarrow q\circ G$,
which follows from the fact that $\Rightarrow$ on
$\overline{\mathbb{K}}$-functors is reflexive and
preserved under composition.
\item[[(ii), the ``only if'' part\!\!\!]]
Since 
$\db{\Phi,\overline{\mathbb{K}}},\db{\Psi,\overline{\mathbb{K}}}\colon\db{\lcs{E},\overline{\mathbb{K}}}\longrightarrow\db{\lcs{D},\overline{\mathbb{K}}}$,
we aim to prove that for all
$\pi_w\in\db{\lcs{E},\overline{\mathbb{K}}}$,
\[
d_\db{\lcs{D},\overline{\mathbb{K}}}(\pi_{\ind{\Phi}(w)},\pi_{\ind{\Psi}(w)})=\sup_{p\in\lcs{D}_0}\{\Psi_0(p)(w)-\Phi_0(p)(w)\}\leq0.
\]
Because 
$0\geq\Psi_0(p)(w)-\Phi_0(p)(w)\iff\Phi_0(p)(w)\geq\Psi_0(p)(w)$ 
by
\begin{prooftree}
	\def\fCenter{\ \geq\ }
	\alwaysDoubleLine
	\Axiom$0\fCenter\Psi_0(p)(w)-\Phi_0(p)(w)$
	\UnaryInf$\Phi_0(p)(w)\fCenter \Psi_0(p)(w)$
\end{prooftree}
it suffices to show that
\[
\Phi_0(p)\geq\Psi_0(p)
\]
holds in $\lcs{E}_0$ for all $p\in\lcs{D}_0$, but this is 
nothing but the condition for $\Phi\Rightarrow\Psi$.
\item[[(i), the ``if'' part\!\!\!]]
The ``only if'' part of (ii) implies that 
$\db{[F,\overline{\mathbb{K}}],\overline{\mathbb{K}}}\Rightarrow\db{[G,\overline{\mathbb{K}}],\overline{\mathbb{K}}}$
holds.
By the proof of Theorem \ref{mainthm_map} (i),
we have the following commutative diagram:
\begin{center}
\begin{tikzpicture}
\node (P1) at (0,2) {$\mathcal{A}$} ;
\node (P2) at (3,2) {$\db{[\mathcal{A},\overline{\mathbb{K}}],\overline{\mathbb{K}}}$};
\node (P3) at (0,0) {$\mathcal{B}$};
\node (P4) at (3,0) {$\db{[\mathcal{B},\overline{\mathbb{K}}],\overline{\mathbb{K}}}$};
\draw
(P1) edge[my arrow] node[above] {$\unit{\mathcal{A}}$} (P2)
(P1) edge[my arrow] node[left] {$F$} (P3)
(P2) edge[my arrow] node[right] {$\db{[F,\overline{\mathbb{K}}],\overline{\mathbb{K}}}$} (P4)
(P3) edge[my arrow] node[below] {$\unit{\mathcal{B}}$} (P4);
\end{tikzpicture}
\end{center}
and similarly for $G$. 
In equations, 
\begin{align*}
F&=\unit{\mathcal{B}}^{-1}\circ\db{[F,\overline{\mathbb{K}}],\overline{\mathbb{K}}}\circ\unit{\mathcal{A}},\\
G&=\unit{\mathcal{B}}^{-1}\circ\db{[G,\overline{\mathbb{K}}],\overline{\mathbb{K}}}\circ\unit{\mathcal{A}}.
\end{align*}
Since $\unit{\mathcal{A}}$, $\unit{\mathcal{B}}$ are
isomorphisms, we have
\begin{align*}
d_\mathcal{B}(F(a),G(a))&=
d_\mathcal{B}(\unit{\mathcal{B}}^{-1}\circ\db{[F,\overline{\mathbb{K}}],\overline{\mathbb{K}}}\circ\unit{\mathcal{A}}(a),\unit{\mathcal{B}}^{-1}\circ\db{[G,\overline{\mathbb{K}}],\overline{\mathbb{K}}}\circ\unit{\mathcal{A}}(a))\\
&=d_\mathcal{B}(\unit{\mathcal{B}}^{-1}\circ\db{[F,\overline{\mathbb{K}}],\overline{\mathbb{K}}}(\pi_a),\unit{\mathcal{B}}^{-1}\circ\db{[G,\overline{\mathbb{K}}],\overline{\mathbb{K}}}(\pi_a))\\
&=d_\db{[\mathcal{B},\overline{\mathbb{K}}],\overline{\mathbb{K}}}(\db{[F,\overline{\mathbb{K}}],\overline{\mathbb{K}}}(\pi_a),\db{[G,\overline{\mathbb{K}}],\overline{\mathbb{K}}}(\pi_a))\\
&\leq 0
\end{align*}
for all $a\in \ob{\mathcal{A}}$. 
\item[[(ii), the ``if'' part\!\!\!]]
The ``only if'' part of (i) implies that 
$[\db{\Phi,\overline{\mathbb{K}}},\overline{\mathbb{K}}]\Rightarrow[\db{\Psi,\overline{\mathbb{K}}},\overline{\mathbb{K}}]$
holds.
By the proof of Theorem \ref{mainthm_map} (ii),
we have the following commutative diagram:
\begin{center}
\begin{tikzpicture}
\node (P1) at (0,2) {$[\db{\lcs{D},\overline{\mathbb{K}}},\overline{\mathbb{K}}]$} ;
\node (P2) at (3,2) {$\lcs{D}$};
\node (P3) at (0,0) {$[\db{\lcs{E},\overline{\mathbb{K}}},\overline{\mathbb{K}}]$};
\node (P4) at (3,0) {$\lcs{E}$};
\draw
(P1) edge[my arrow] node[above] {$\counit{\lcs{D}}$} (P2)
(P1) edge[my arrow] node[left] {$[\db{\Phi,\overline{\mathbb{K}}},\overline{\mathbb{K}}]$} (P3)
(P2) edge[my arrow] node[right] {$\Phi$} (P4)
(P3) edge[my arrow] node[below] {$\counit{\lcs{E}}$} (P4);
\end{tikzpicture}
\end{center}
and similarly for $\Psi$. 
In equations,
\begin{align*}
\Phi&=\counit{\lcs{E}}\circ[\db{\Phi,\overline{\mathbb{K}}},\overline{\mathbb{K}}]\circ\counit{\lcs{D}}^{-1},\\
\Psi&=\counit{\lcs{E}}\circ[\db{\Psi,\overline{\mathbb{K}}},\overline{\mathbb{K}}]\circ\counit{\lcs{D}}^{-1}.
\end{align*}
We have
\begin{align*}
\Phi_0(p)(w)&=(\counit{\lcs{E}})_0\circ[\db{\Phi,\overline{\mathbb{K}}},\overline{\mathbb{K}}]_0\circ(\counit{\lcs{D}}^{-1})_0(p)(w)\\
&=(\counit{\lcs{E}})_0\big([\db{\Phi,\overline{\mathbb{K}}},\overline{\mathbb{K}}]_0\circ(\counit{\lcs{D}}^{-1})_0(p)\big)(w)\\
&=\big(\lambda w^\prime\in\ind{\lcs{E}}.\,[\db{\Phi,\overline{\mathbb{K}}},\overline{\mathbb{K}}]_0\circ(\counit{\lcs{D}}^{-1})_0(p)(\pi_{w^\prime})\big)(w)\\
&=[\db{\Phi,\overline{\mathbb{K}}},\overline{\mathbb{K}}]_0\circ(\counit{\lcs{D}}^{-1})_0(p)(\pi_w)\\
&=[\db{\Phi,\overline{\mathbb{K}}},\overline{\mathbb{K}}]_0\big((\counit{\lcs{D}}^{-1})_0(p)\big)(\pi_w)
\end{align*}
and 
\[
\Psi_0(p)(w)=[\db{\Psi,\overline{\mathbb{K}}},\overline{\mathbb{K}}]_0\big((\counit{\lcs{D}}^{-1})_0(p)\big)(\pi_w)
\]
for all $p\in\lcs{D}_0$ and $w\in\ind{\lcs{E}}$.
Since 
$(\counit{\lcs{D}}^{-1})_0(p)\in[\db{\lcs{D},\overline{\mathbb{K}}},\overline{\mathbb{K}}]_0$,
we have 
\[
[\db{\Phi,\overline{\mathbb{K}}},\overline{\mathbb{K}}]_0\big((\counit{\lcs{D}}^{-1})_0(p)\big)(\pi_w)\geq
[\db{\Psi,\overline{\mathbb{K}}},\overline{\mathbb{K}}]_0\big((\counit{\lcs{D}}^{-1})_0(p)\big)(\pi_w).
\]
Therefore we obtain
$\Phi(p)(w)\geq \Psi(p)(w)$
for all $p\in\lcs{D}_0$ and $w\in\ind{\lcs{E}}$;
so $\Phi(p)\geq\Psi(p)$ for all $p\in\lcs{D}_0$;
so $\Phi\Rightarrow\Psi$.
\end{description}
\end{proof}

\chapter{Conclusion}
\section{Summary and Concluding Remarks}
We introduced
$\overline{\mathbb{K}}$-extended L-convex sets, 
homomorphisms 
and canonical orderings
(between homomorphisms),
and established the correspondence of them to 
entities of enriched-categorical origin;
$\overline{\mathbb{K}}$-categories,
$\overline{\mathbb{K}}$-functors 
and 
canonical orderings (between $\overline{\mathbb{K}}$-functors).
The whole correspondence between 
the theory of $\overline{\mathbb{K}}$-extended L-convex 
sets and that of $\overline{\mathbb{K}}$-categories
is so harmonious that
one may dare to say that these two 
theories are in fact the \textit{identical} 
one in different guises.

The formulation of a duality by the function space
(or ``hom'') with a fixed codomain  
living in two worlds (such an object is called
e.g., a \textit{Janusian object}, after Janus, a god in
Roman mythology usually depicted as having
two faces),
as in ours, is a common one.
A classical example is the 
\textit{Stone duality}; see~\cite{Joh86} for
details.
\cite{PT91} is an attempt at a general theory of such dualities.
One can also interpret Birkhoff's representation
theorem stating correspondence between
finite distributive lattices and finite posets,
or its infinite generalization~\cite{Win83},
in a similar fashion;
in fact, it is possible to regard them as the \textbf{2}-version 
of our result (concerning $\overline{\mathbb{K}}$).
In particular, we note that from an enriched-categorical 
viewpoint, finite distributive lattices are nothing
but finite \textbf{2}\textit{-presheaf categories}.

\section{Further Work}
Finally, we conclude the thesis by mentioning  
possible directions of further study.
An obvious problem is generalization of our
duality theorem (on $\overline{\mathbb{K}}$)
to other enriching posets
(or categories) $\mathcal{V}$.
It may also be fruitful to study on another particular 
enriching poset $\mathcal{V}$
and on $\mathcal{V}$-categories;
$\overline{\mathbb{K}}_+^\text{Cart}$ is
an attractive candidate.
Although some results, e.g., Corollary~\ref{cor}, seem to suggest
a strong link between enriched category theory and 
discrete convex analysis, we could not find any 
application which utilizes the link effectively.
A full-scale approach to discrete convex analysis
from a categorical viewpoint is hoped for.

\backmatter
\chapter{Acknowledgments}
First I would like to express my gratitude to my
supervisor Hiroshi Hirai, for allowing me to 
conduct the research in my favorite field 
(category theory),
for suggesting initial directions of study,
and for his occasional comments and encouragement.
In particular, I note that it was his 
suggestion on possible connections between 
distributive lattices and L-convex sets that lead me to 
the series of results.

As for the categorical side, I received many valuable
comments from Ichiro Hasuo, Kazuyuki Asada, and 
the members of the Hasuo Laboratory
at the Department of Computer Science.
I thank them for their kind assistance.

Life in the Mathematical Informatics 2nd Laboratory 
was comfortable and enjoyable;
I am thankful to the members of the laboratory, 
who were always helpful.

Last, but not least, I sincerely thank my family and 
grandparents for their warm and generous support.

\appendix
\end{document}